\theoremstyle{plain}
\newtheorem{theorem}{Theorem}
\newtheorem{lemma}[theorem]{Lemma}
\newtheorem{corollary}[theorem]{Corollary}
\theoremstyle{remark}
\newtheorem{remark}{Remark}
\newtheorem{assumption}[remark]{Assumption}
\newtheorem{condition}[remark]{Condition}
\newtheorem{example}[remark]{Example}
\newtheorem{definition}[remark]{Definition}
\newcommand{\s}[1][1]{\hspace{#1pt}}
\newcommand{\tq}[1]{{\textquotedblleft #1\textquotedblright}}
\newcommand{\oprst}{\textup} 
\newcommand{\var}{\textup{\oprst{var}}}
\newcommand{\Exp}{\textup{\oprst{E}}\s[.5]}
\newcommand{\Prb}{\textup{\oprst{P}}\s[.5]}
\newcommand{\tsum}{{\textstyle\sum\nolimits}}
\newcommand{\cst}{ \s[0.5] : \s[0.5] }
\newcommand{\ip}[2]{\langle #1, #2 \rangle}
\newcommand{\tr}{\textup{\text{tr}}}   
\newcommand{\diag}{\text{diag}}
\xdef\csname cal\x\endcsname{\noexpand 
	\ensuremath{\noexpand\mathcal{\x}}}
\xdef\csname scr\x\endcsname{\noexpand 
	\ensuremath{\noexpand\mathscr{\x}}}
\xdef\csname bb\x\endcsname{\noexpand 
	\ensuremath{\noexpand\mathbb{\x}}}
\xdef\csname rm\x\endcsname{\noexpand 
	\ensuremath{\noexpand\mathrm{\x}}}
\xdef\csname bf\x\endcsname{\noexpand 
	\ensuremath{\noexpand\mbf{\x}}}
\newcommand{\ep}{\epsilon}
\newcommand{\upto}{ \uparrow }
\let\gamma\upgamma
\let\epsilon\upepsilon
\let\tau\uptau
\let\pi\uppi
\let\kappa\upkappa
\let\omega\upomega
\newcommand{\hSig}{\bm{\hat}{\Sigma}}
\newcommand{\bSig}{\Sigma}
\newcommand{\hw}{\hat{w}}
\newcommand{\hsig}{\hat{\sigma}}
\newcommand{\col}[1]{\textsc{col}{\s[0.5] (#1)}}
\newcommand{\np}{n/\s[-0.5] p}
\newcommand{\tru}{\scrV}
\newcommand{\sv}{\gamma}
\newcommand{\Sv}{\Gamma}
\newcommand{\limp}{\lim_{p \upto \infty}}
\newcommand{\lsup}{\varlimsup_{p \upto \infty}}
\newcommand{\linp}{\varliminf_{p \upto \infty}}
\newcommand{\snr}{{{\Psi}}}
\newcommand{\hl}{\textup{\textsc{hdlss}}}
\newcommand{\hd}{\hl}
\newcommand{\pca}{\textup{\textsc{pca}}}
\newcommand{\gps}{\textup{\textsc{gps}}}
\newcommand{\Pd}{\Pi}
\newcommand{\zv}{\zeta}
\newcommand{\jj}{j \s[-2] j}
\newcommand{\est}{\hsig}
\newcommand{\HS}{\ensuremath{\textup{\textsc{hs}}}}
\newcommand{\seig}{\scrs}
\newcommand{\qeig}{\calS}
\newcommand{\hs}[1]{\| \s #1 \s\|_\HS}
\newcommand{\X}{\calX}
\newcommand{\E}{\calE}
\newcommand{\Id}{I}
\newcommand{\Y}{Y}
\newcommand{\Sam}{S}
\newcommand{\cn}{g}
\newcommand{\Jc}{J}
\newcommand{\Ond}{\bbO_B}
\newcommand{\Xv}{M}
\newcommand{\f}{f}
\newcommand{\pu}{\ensuremath{p_{\max}}}
\newcommand{\Lw}{\calW}
\newcommand{\hz}{\varphi}
\newcommand{\cl}{c_1}
\newcommand{\cc}{c_0}
\newcommand{\hx}{\hat{x}}
\newcommand{\nz}{\mu}
\newcommand{\hn}{\hat{\nz}}
\newcommand{\mf}{\hat{D}}
\newcommand{\rv}{r}
\newcommand{\Lp}{W_p}
\newcommand{\ran}{\scrM}
\newcommand{\zpH}{z_{\perp \scrH}}
\newcommand{\Rv}{\scrM}
\newcommand{\ci}{\citet*}
\newcommand{\req}[1]{(\ref{#1})}
\begin{document}

\begin{frontmatter}
\title{The Quadratic Optimization Bias of Large Covariance Matrices}
\runtitle{The Quadratic Optimization Bias of Large Covariance Matrices}

\begin{aug}
\author[A]{\fnms{Hubeyb}~\snm{Gurdogan}\ead[label=e1]{hgurdogan@math.ucla.edu}}
\and
\author[B]{\fnms{Alex}~\snm{Shkolnik}\ead[label=e2]{shkolnik@ucsb.edu}}
\address[A]{Department of Mathematics, University of California, Los Angeles, CA. \printead[presep={,\ }]{e1}}
\address[B]{Department of Statistics and Applied Probability, University of California, Santa Barbara, CA. \printead[presep={,\ }]{e2}}
\end{aug}

\begin{abstract}
We describe a puzzle involving  the interactions between an
optimization of a multivariate quadratic function and a
\tq{plug-in} estimator of a spiked covariance matrix.  When the
largest eigenvalues (i.e., the spikes) diverge with the
dimension, the gap between the true and the out-of-sample optima
typically also diverges. We show how to \tq{fine-tune} the
plug-in estimator in a precise way to avoid this outcome.
Central to our description is a \tq{\it quadratic optimization
bias} function, the roots of which determine this fine-tuning
property. We derive an estimator of this root from a finite
number of observations of a high dimensional vector.  This leads
to a new covariance  estimator designed specifically for
applications involving quadratic optimization. Our theoretical
results have further implications for improving low dimensional
representations of data, and principal component analysis in
particular.
\end{abstract}

\begin{keyword}[class=MSC]
\kwd[Primary ]{}
\kwd{62H12}
\kwd[; secondary ]{62H25}
\end{keyword}

\begin{keyword}
\kwd{Covariance estimation, 
optimization, sample eigenvector correction, dimension reduction, 
spectral methods, principal component analysis}
\end{keyword}

\end{frontmatter}


\section{Introduction}
\label{sec:intro}

Optimization with a \tq{plug-in} model as an ingredient is
routine practice in modern statistical problems in engineering
and the sciences.  Yet the interactions between the optimization
procedure and the errors in an estimated model are often not
well understood.  Natural questions in this context include the
following: \tq{\it Does the optimizer amplify or reduce the
statistical errors in the model?  How does one leverage that
information if it is known? Which components of the model should
be estimated more precisely, and which can afford less
accuracy?} We explore these questions for the optimization of a
multivariate quadratic function that is specified in terms of a
large covariance model.  This setup is canonical for many
problems that are encountered in the areas of finance, signal-noise
processing, operations research and statistics.

Large covariance estimation occupies an important place in
high-dimensional statistics and is fundamental to multivariate
data analysis (e.g., \ci{yao2015}, \ci{fan2016a} and
\ci{lam2020}).  A covariance model generalizes the classical
setting of independence by introducing pairwise correlations.  A
parsimonious way to prescribe such correlations for many
variables is through the use of a relatively small number of
factors, which are high-dimensional vectors that govern all or
most of the correlations in the observed data.  This leads to a
particular type of covariance matrix, a so called
\tq{spiked-model} in which a small number of (spiked)
eigenvalues separate themselves with a larger magnitude from the
remaining (bulk) spectrum \citep{wang2017}.  Imposing this
factor structure may also be viewed as a form of regularization
which replaces the problem of estimating $p^2$ unknown
parameters of a $p \times p$ covariance matrix $\bSig$ with the
estimation of a few \tq{structured} components of this matrix.
Determining the components that require the most attention in a
setting that entails optimization is a central motivation of our
work.

\subsection{Motivation} \label{sec:motive}
To motivate the study of the interplay between optimization and 
model estimation error, we consider a quadratic function in $p$ 
variables. Let,
\begin{align} \label{quad} \s[32] Q(x) = \cc  + \cl \s
\ip{x}{\zv} - \frac{1}{2} \s \ip{x}{\Sigma x}
\s[32] ( x \in \bbR^p ) 
\end{align}
for an inner product $\ip{\s\cdot}{\cdot\s}$,  constants
$\cc,\cl \in \bbR$ and a vector $\zv \in \bbR^p$.  The $p \times
p$ matrix $\bSig$ is assumed to be symmetric and positive
definite. The maximization of $\req{quad}$ is encountered in
many classical contexts within statistics and probability
including least-squares regression, maximum a posteriori
estimation, saddle-point approximations, and Legendre-Fenchel
transforms in moderate/large deviations theory. Some related and
highly influential applications include Markowitz's portfolio
construction in finance \citep{markowitz1952},
Capon beamforming in signal processing \citep{capon1969} and
optimal fingerprinting in climate science \citep{hegerl1996}.
In optimization theory, quadratic functions form a key
ingredient for more general (black-box) minimization techniques
such as trust-region methods (e.g.,
\ci{maggiar2018}).\footnote{In this setting the covariance
matrix corresponds to an estimated Hessian matrix.} Since any
number of linear equality constraints may be put into the
unconstrained Lagrangian form in $\req{quad}$, our setting is
more general than it first appears.  Moreover, the maximization
of $\req{quad}$ is the starting point for numerous applications
of quadratic programming where nonlinear constraints are often
added.\footnote{To give one example, interpreting $\bSig$ as a
graph adjacency matrix and adding simple bound constants to
$\req{quad}$ leads to approximations of graph properties such as
the maximum independent set \citep{hager2015}. While $\bSig$ is
no longer interpreted as a spiked covariance matrix in a graph
theory setting, its mathematical properties are similar owing to
the celebrated Cheeger's inequality.}

The maximizer of $Q(\s \cdot \s)$ is given by 
$\cl \s \Sigma^{-1} \zv$ which attains the objective value
\begin{align} \label{conv}
\s[32]  \max_{x \in \bbR^p} Q(x) 
= \cc + \frac{\cl^2 \nz^2_p}{2}
\s[32] \big( \s \nz^2_p =  \ip{\zv}{\Sigma^{-1} \zv} \s \big) \s ,
\end{align}
but in practice, the maximization of  $Q(\s \cdot \s)$ is
performed with an  estimate $\hSig$ replacing the unknown
$\bSig$.  This \tq{plug-in} step is well known to yield a
perplexing problem  (see Section \ref{sec:lit}).  In essence,
the optimizer chases the errors in $\hSig$ to produce a
systematic bias in the computed maximum.  This bias is then
amplified by a higher dimension. 

Consider a high-dimensional limit $p \upto \infty$  and a
sequence of symmetric positive definite $\bSig = \bSig_{p \times p}$
with a fixed number $q$ of spiked eigenvalues diverging in
$p$ and all remaining eigenvalues bounded in $(0, \infty)$.  Let
$\hx$ be the maximizer of $\hat{Q}(\s \cdot \s)$, defined by
replacing $\bSig$ in $\req{quad}$ by estimates $\hSig = \hSig_{p
\times p}$ with the same eigenvalue properties. The estimated
objective is $\hat{Q}(\hx)$, but a more relevant quantity is the
realized objective, 
\begin{align} \label{Qhx}
Q(\hx) 
&= \cc + \cl \s \ip{\hx}{\zv} - \frac{1}{2} \ip{\hx}{\Sigma \hx}  
= \cc + \frac{\cl^2 \hn^2_p}{2} \s \mf_p
\end{align}
where $\hn^2_p = \ip{\zv}{\hSig^{-1} \zv}$ and $\mf_p$ is a
discrepancy (relative to $\req{conv}$) that can grow
rapidly as the dimension increases.  
Precluding edge cases where $\hn^2_p\s / \ip{\zv}{\zv}$
or $\ip{\zv}{\zv}$ vanish, unless $\hSig$ is 
fine-tuned in a calculated way, the following puzzling
behavior ensues.

\vspace{-0.04in}
\begin{enumerate}[leftmargin=0.32in, rightmargin=0.32in] {\it 
\item[~] The discrepancy $\mf_p$ tends to $-\infty$ as $p \upto \infty$
and consequently, the realized maximum
$Q(\hx)$ tends to $-\infty$ while the true
maximum $\req{conv}$ tends to $+\infty$. 
}\end{enumerate}
\vspace{-0.02in}

The asymptotic behavior above is fully determined by a certain
$\bbR^q$-valued function $\scrE_p(\s \cdot \s)$  which we derive
and call the \emph{quadratic optimization bias}. The way in
which this bias depends on the entries of $\hSig$ characterizes
the sought after interplay between the optimizer and the error
in the estimated covariance model. Mitigating the discrepancy
between the realized and true quadratic optima $\req{conv}$ and
$\req{Qhx}$ reduces to the problem of approximating the roots of
$\scrE_p(\s \cdot \s)$.  We remark that by parametrizing the
constants $\cc$ and $\cl$ in $p$, one can arrive at an
alternative limits for $\req{conv}$ and $\req{Qhx}$, but
practical scalings preserve the large disparity between the true
and realized objective values.  We examine (in Section
\ref{sec:risk}) the scaling $\cl = 1/p$ in particular, due to
its applicability to portfolio theory, robust beamforming and
optimal fingerprinting.

\subsection{Summary of results \& organization} \label{section:summary} 
The illustration above reflects that, in statistical settings,
solutions to estimated quadratic optimization problems exhibit
very poor properties out-of-sample.  Section \ref{sec:qob}
answers the question of which components of $\bSig$ must be
estimated accurately to reduce the discrepancy $\mf_p$ in
$\req{Qhx}$.  The size of $|\mf_p|$ is amplified by the growth
rate $\rv_p$ of the $q$ spiked eigenvalues, but is fully
determined by the precision of the estimate $\scrH$ of the
associated $p \times q$ matrix of eigenvectors $\scrB$ of
$\bSig$.  In particular, $\mf_p = -|\scrE_p(\scrH)|^2 \s O(r_p)$ 
where $\scrE_p(\scrH)$ is given by,
\begin{align} \label{obf} \s[32] 
\scrE_p(\scrH) = \frac{\scrB^\top z - (\scrB^\top \scrH)
(\scrH^\top z) }{\sqrt{1 - |\scrH^\top z|^2}} 
\s[32] \Big( z = \frac{\zv}{|\zv|} \Big) \s ,
\end{align}
for the Euclidean length $| \cdot |\s$.
Theorem~\ref{thm:discrepancy} gives sharp asymptotics for
$\mf_p$ in $\scrE_p(\scrH)$ and the other estimates/parameters.
Remarkably, the accuracy of the estimates of eigenvalues of
$\bSig$ is secondary relative the ensuring that $\scrH$ is such
that $\scrE_p(\scrH)$ is small for large $p$.  This is
noteworthy in view of the large literature on bias correction of
sample eigenvalues (or \tq{eigenvalue shrinkage}: see
\ci{ollila2020}, \ci{ledoit2021}, \ci{ledoit2022} and
\ci{donoho2023} for a sampling of recent work).  Instead, for
the discrepancy $\mf_p$, the estimation of the eigenvectors of
the spikes is what matters most.  We remark that while $\scrH =
\scrB$ forms a root of the map $\scrE_p \cst \bbR^{p \times q}
\to \bbR^q$ (i.e., $\scrE_p(\scrB) = 0_q$), it is not the only
root.  We refer to $\scrE_p (\s \cdot \s)$ as the quadratic
optimization bias (function) which was first identified in
\ci{goldberg2022} in the context of portfolio theory and for the
special covariance $\bSig$ with a single spike $(q = 1)$ and
identical remaining eigenvalues.\footnote{We state a more
general definition in Section \ref{sec:qob}, but $\scrH$ must
have orthonormal columns in $\req{obf}$. \label{fn:obf_orth}}

Section \ref{sec:pca} considers a $p \times p$ sample covariance
matrix $\Sam$ and its spectral decomposition
$\Sam = \scrH \qeig_p^2 \scrH^\top + G$, for a diagonal
$q \times q$ matrix
of eigenvalues $\qeig^2_p$, the associated $p \times q$ matrix
$\scrH$ of eigenvectors ($\scrH^\top \scrH = \Id_q$) and a
residual $G$. It is assumed that $\rv_p$ is $O(p)$ and that the
sequence $\Sam = \Sam_{p \times p}$ is based on a fixed number
of observations of a high dimensional vector.  Our
Theorem~\ref{thm:pcabias} proves that $\scrE_p(\scrH)$ is almost
surely bounded away from zero (in $\bbR^q$) eventually in $p$.
This has material implications for the use of principal
component analysis for problems motivated by
Section~\ref{sec:motive}.

Section \ref{sec:Hsharp} develops the following correction to
the sample eigenvectors $\scrH$. For the $q \times q$ diagonal
matrix $\snr$ satisfying $\snr^2 = \Id_q -\s  \tr(G)\s
\qeig_p^{-2} / n_q$ for $n_q \ge 1$, the difference between the
number of nonzero sample eigenvalues and $q$, we compute
\begin{align} \label{Hcorr}
\scrH \Psi + \frac{ z - \scrH \scrH^\top z }
{1 - |\scrH^\top z|^2} \s z^\top \scrH(\Psi^{-1}-\Psi ) \s .
\end{align}
Theorem \ref{thm:Hsharp} proves the $p \times q$ 
matrix of left singular vectors of
$\req{Hcorr}$, denoted $\scrH_\sharp$, has
\begin{align} \label{EHslim}
\s[32] \scrE_p(\scrH_\sharp) \to 0_q 
\s[32] (p \upto \infty) \s ,
\end{align}
almost surely.  The matrix $\scrH_\sharp$ constitutes a set of
corrected principal component loadings and is the basis of our
covariance estimator $\hSig_\sharp$. This matrix, owing to
$\req{EHslim}$, yields an improved plug-in estimator $x_\sharp =
\cl \hSig_\sharp^{-1} \zv$ for the maximizer of $\req{quad}$.
Thus, our work also has implications for the estimation of the
precision matrix $\bSig^{-1}$.  Theorem~\ref{thm:Hsharp} also
proves that the columns of $\scrH_\sharp$ have a larger
projection (than $\scrH$) onto the column space of $\scrB$.
Recent literature has remarked on the difficulty (or even
impossibility) of correcting such bias in eigenvectors (e.g.,
\ci{lw2017}, \ci{wang2017} and \ci{jung2022}).  That projection
is strictly better when $z$ in $\req{obf}$ has $|\scrB^\top z|$
bounded away from zero, i.e., captures information about that
subspace.  But $\req{EHslim}$ holds regardless, highlighting
that the choice of the \tq{loss} function (in our case
$\req{Qhx}$) matters.\footnote{See also \ci{donoho2018} for
another illustration of this phenomenon.} 

In Section \ref{sec:impossible}, we prove an impossibility
theorem (Theorem \ref{thm:angles}) that shows that without very
strong assumptions one cannot obtain an estimator of $\scrB^\top
\scrH$ asymptotically in the dimension if $q > 1$.  This has
negative implications for obtaining estimates of
$\scrE_p(\scrH)$ in $\req{obf}$ where $\scrB^\top \scrH$ is one
of the unknowns.  The latter contains all $q^2$ inner products
between the sample and population eigenvectors, and its
estimation from the observed data is an interesting theoretical
problem in its own right.  Our negative result adds to the
literature on high dimension and low sample size (\hl)
asymptotics, as inspired by \ci{hall2005} and
\ci{ahn2007}.\footnote{\ci{aoshima2018} survey much of the
literature since.} We remark that the {\hl} regime is highly
relevant for real-world data as a small sample size is often
imposed by experimental constraints, or by the lack of
long-range stationarity of time series. The content of Theorem
\ref{thm:angles} also points to a key feature that distinguishes
our work from \ci{goldberg2022}) who fix $q =1$. Another aspect
making our setting substantially more challenging is that
we find roots of a multivariate function $\scrE_p(\s \cdot
\s)$ (which is univariate when $q = 1$).

\begin{figure}[htp!]
\vspace{-0.1in}
\centering
\includegraphics[width=4in]{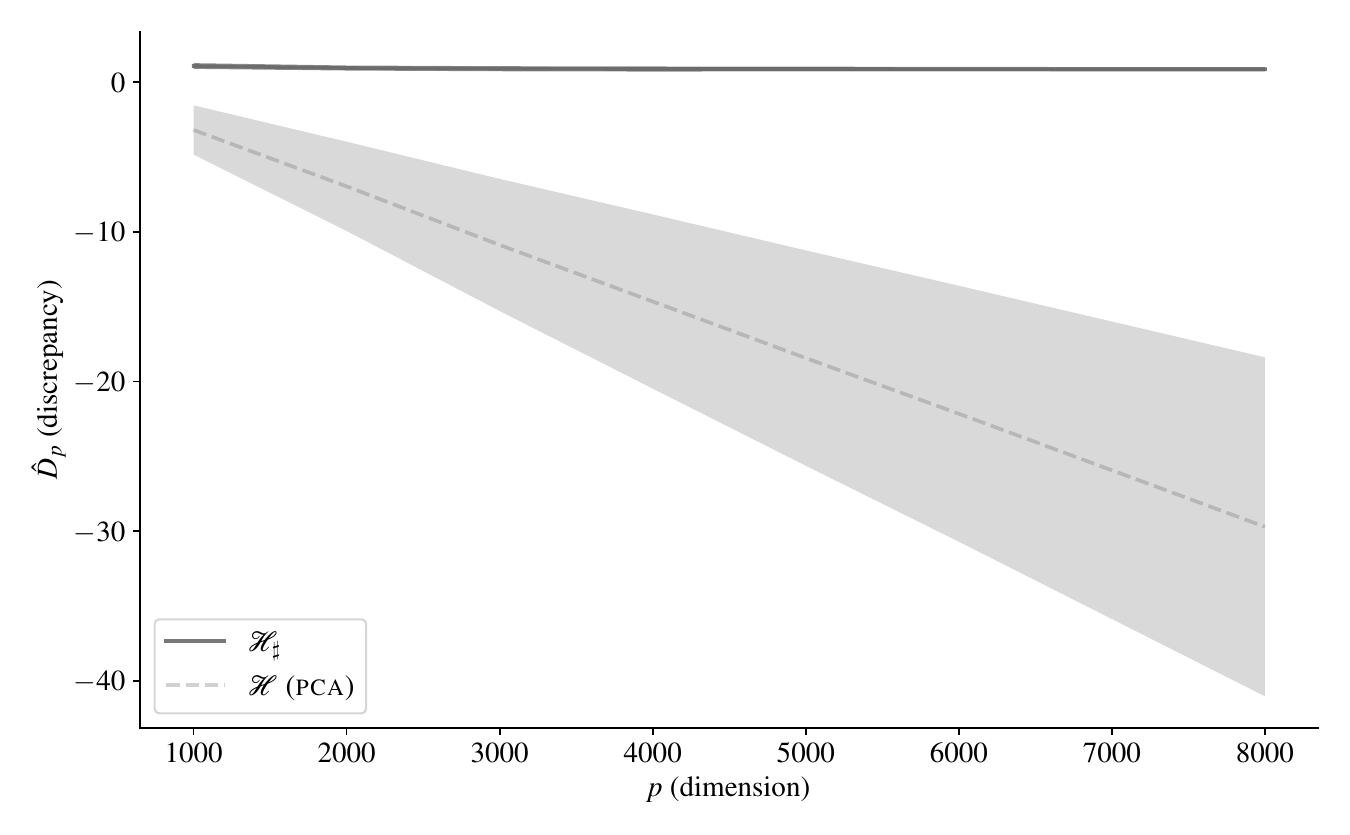}
\vspace{-0.1in}
\caption{Discrepancy $\mf_p$ (with two standard deviation error bars) 
for two covariance models estimated from the 
simulated data sets of Section \ref{sec:numerics}.
The first (solid line) is based on $\req{Hcorr}$ 
and the resulting corrected eigenvectors $\scrH_\sharp$ . The
second (dashed line) is based on the raw 
sample eigenvectors $\scrH$ (\pca). The optimal
$\mf_p$ equals $1$.}
\label{fig:Dp}
\end{figure}

In terms of applications, our results generalize those of
\ci{goldberg2022} to covariance models that hold wide acceptance
in the empirical literature on financial asset return (i.e., the
Arbitrage Pricing Theory of \ci{ross1976}, \ci{huberman1982},
\ci{chamroth1983} and others). Section \ref{sec:numerics}
investigates the problem of minimum variance investing with
numerical simulation, and demonstrates that the estimator
$\scrH_\sharp$ results in vanishing asymptotic portfolio risk
and a bounded discrepancy $\mf_p$ (see Figure~\ref{fig:Dp}).
Appendix \ref{app:lit} summarizes other applications including
signal-noise processing and climate science as related to
Section~\ref{sec:motive}.

\subsection{Limitations \& related literature}
\label{sec:lit}
Our findings in Section \ref{sec:motive} form a starting point
for important extensions and applications.  Extending the
estimator in $\req{Hcorr}$ to general quadratic programming with
inequality constraints would greatly expand its scope. In terms
of covariance models, we require spikes that diverge linearly
with the dimension, which excludes several alternative
frameworks in the literature.\footnote{This includes the
Johnstone spike model, in which all eigenvalues remain bounded
as the dimension grows, and its extensions (e.g.,
\ci{johnstone2001}, \ci{paul2007}, \ci{johnstone2009} and
\ci{bai2012}). Futher generalizations include slowly growing
spiked eigenvalue models as in \ci{de2008}, \ci{onatski2012},
\ci{shen2016} and \ci{bai2023}.} Likewise, the asymptotics of
the data matrix aspect ratio $p/n$ differs across applications.
We also do not address the important setting in which the number
of spikes $q$ is misspecified.\footnote{There is a large
literature on the estimation of the number of
spikes/factors/principal components.  Most relevant to our setup
(high dimension and low sample size) is \ci{jung2018}.} Finally,
the established convergence in $\req{EHslim}$ leaves the
question of rates  unanswered.  This is particularly important
for problems requiring the discrepancy $\mf_p$ to not grow too
quickly.  We offer no theoretical treatment of convergence rates
but our numerical results suggest this quantity remains bounded
as $p$ grows (c.f., Figure~\ref{fig:Dp}).

The work we build on directly was initiated in \ci{goldberg2022}.
We refer to their proposal as the GPS estimator and derive it in
Section~\ref{sec:gps}.  Important extensions are developed in
\ci{gurdogan2022} and \ci{goldberg2023}.  The GPS estimator was
shown to be mathematically equivalent to a James-Stein
estimation of the leading eigenvector of a sample covariance
matrix in \ci{shkolnik2022}.  These results share much in common
with the ideas found in \ci{casella1982}.  For a survey of the
above literature, focusing on connections to the James-Stein
estimator, see \ci{goldberg2023surv}.  The GPS estimator is
explained in terms of regularization in \ci{lee2024a}, and
\ci{lee2024b} derive central limit theorems for this estimator
as relevant for the convergence of the discrepancy $\mf_p$.
Some numerical exploration of the case of more than one spike is
found in \ci{goldberg2020}. 

The spiked covariance models we consider, and the application of
{\pca} for their estimation, are rooted in the literature on
approximate factor models and \tq{asymptotic principal
components} originating with \ci{chamroth1983} and
\ci{connor1986}. Recent work in this direction is well
represented by \ci{bai2008}, \ci{fan2013}, \ci{bai2023} and
\ci{fan2023}.  In this literature, the work that most closely
resembles ours, by focusing on improved estimation of sample
eigenvectors, is \ci{fan2016b}, \ci{fan2018} and
\ci{lettau2020}.  \ci{fan2016b} project the data onto a space
generated by some externally observed covariates, improving the
resulting sample eigenvectors when the covariates have
sufficient explanatory power. \ci{fan2018} apply a linear
transformation to the sample eigenvectors in an approach that is
most closely related to formula $\req{Hcorr}$. We also apply a
linear transformation, but the eigenspace is first augmented by
the vector $\zv$ in $\req{quad}$.\footnote{We remark that with a
single spike/factor (i.e., $q =1$), a linear transformation of
the eigenvector(s) adjustment only the eigenvalue, not the
eigenvector itself due to its unit length normalization. Further
differences with \ci{fan2016b} arise in the estimation of the
optimal linear transformation.} \ci{lettau2020} extract
principal components from a rank-one updated sample covariance
matrix. This update is based on insight from asset pricing
theory and it is unclear how the resulting sample eigenvectors
are related to formula $\req{Hcorr}$. The same applies to the
very closely related literature on sample covariance matrix
shrinkage (e.g., \ci{ledoit2004b}, \ci{fisher2011},
\ci{tomer2014} and \ci{wang2024}).\footnote{This takes the form
$\hSig = a \s \Sam + (1- a) F$ for some $a \in [0,1]$ and matrix
$F$.  Targets $F \neq \Id$ adjust eigenvectors but in ways that
may be difficult to quantify via closed-form expressions (c.f.
$\req{Hcorr}$).  \label{lwfoot}}

The vast majority of the literature on approximate factor models
and covariance estimation assumes the data matrix aspect ratio
$p/\s[-0.5] n$ tends to a finite constant
asymptotically.\footnote{This may be due to the outsized
influence of random matrix theory (e.g., \ci{mp1967}). Another
reason may be the consistency of the sample eigenvectors that
can be achieved in this regime (see \ci{yata2009},
\ci{shen2016gen} and \ci{wang2017}.} In contrast, our analysis
of a finite sample in the high dimensional limit draws on the work
on {\pca} in \ci{jung2009}, \ci{jung2012} and \ci{shen2016} and
others.  In the latter, the {\hl} asymptotics for the matrix
$\scrB^\top \scrH$, appearing in $\req{obf}$, have already been
worked out (but see Section~\ref{sec:impossible} for our
impossibility theorem). Our main focus is on correcting the
biases that the asymptotics of $\scrB^\top \scrH$ reveal.  For
approaches to correcting the finite sample bias in eigenvalues
and principal component scores, see \ci{yata2012},
\ci{yata2013}, \ci{jung2022} and our Remark \ref{rem:eigenvals}.
\ci{shen2013} apply regularization in the presence of sparsity
in the population eigenvectors to correct finite sample bias in
the principal components.  It is unclear how their estimators
are related to the update in $\req{Hcorr}$, but we do not impose
such sparsity assumptions.

Several other strands of the {\pca} literature are relevant as
their aims coincide with improved sample eigenvector estimation.
In one direction is the literature on sparse and low-rank matrix
decompositions (e.g. \ci{chandra2012}, \ci{saunderson2012},
\ci{bai2019}, \ci{farne2024} and \ci{li2024}).  These convex
relaxations aim to find more accurate low-dimensional
representations of the data and are sometime referred to as
forms of robust {\pca} \citep{candes2011}.  In a related direction
is the recent work on robust {\pca} for heteroskedastic noise
(e.g., \ci{cai2021}, \ci{zhang2022}, \ci{yan2021},
\ci{agterberg2022} and \ci{zhou2023}). These efforts provide
($p,n$ finite) bounds on generalized angles between the true and
the estimated subspaces and complement our asymptotic {\pca}
results in Sections \ref{sec:pca} \& \ref{sec:Hsharp}.
Perturbations of eigenvectors have also been recently revisited
in \ci{fan2018inf}, \ci{abbe2022} and \ci{li2022}. The latter
use these bounds to construct estimators that \tq{de-bias}
linear forms such as $\scrB^\top z$ appearing in $\req{obf}$.
These results can likely supply alternative proofs to ours (or
even convergence rates), but our focus is on limit theorems only.

Lastly, we emphasize the area of mean-variance portfolio
optimization. As the literature on this topic is quite vast, we
mention only a few strands related to Section~\ref{sec:motive}.
Examples of early influential work in this direction include
\ci{michaud1989} and \ci{best1991}. For numerical simulations
that illustrate the impact on practically motivated models and
metrics see \ci{bianchi2017}.  A random matrix theory
perspective on the behavior of objectives related to $\req{Qhx}$
may be found in \ci{pafka2003}, \ci{bai2009}, \ci{el2010},
\ci{el2013}, \ci{bun2017} and \ci{bodnar2022}.  Highly relevant
recent work in econometrics using latent factor models includes
\ci{ding2021} who consider a portfolio risk measure closely tied
to $\req{Qhx}$.  Bayesian approaches to mean-variance
optimization include \ci{lai2011} and \ci{bauder2021}. These
estimators are closely related to Ledoit-Wolf shrinkage
(\ci{ledoit2003} and \ci{ledoit2004a}) which itself has
undergone numerous improvements (e.g., \ci{lw2018} and
\ci{lw2020pow}). In tandem, shrinkage methods have been known to
impart effects akin to extra constraints in the portfolio
optimization as early as \ci{jm2003}. An insightful example of
such robust portfolio optimization that relates $\req{Qhx}$ to
the convergence of the covariance matrix estimator is developed
in \ci{fan2012}. More advanced robust portfolio optimizations
have also been proposed (e.g., \ci{boyd2024}). Alternatively,
constraints are often applied in the covariance matrix
estimation process as an optimization in itself. For example,
\ci{won2013} apply a condition number constraint that leads to
non-linear adjustments of sample eigenvalues (c.f.,
\ci{lw2020}), but leaves the sample eigenvectors unchanged.
\ci{challet2023} document the difficulty with relying solely on
eigenvalue correction, especially for small sample sizes.
\ci{cai2020} apply sparsity constraints (on the precision
matrix) and analyze optimality properties related to
$\req{Qhx}$.  We emphasize that the impact of such constraints
on eigenvectors is difficult (or impossible) to quantify, in
contrast to formula $\req{Hcorr}$.\footnote{It should be noted
that another interesting approach to mean-variance portfolio
optimization concerns the direct shrinkage of the portfolio
weights (i.e., akin to shrinkage $\hx$ in $\req{Qhx}$, e.g.,
\ci{bodnar2018}, \ci{bodnar2022} \ci{bodnar2023}). }

\subsection{Notation}  \label{sec:notation}
Let $\col{A}$ denote the column span of the matrix $A$ and
let $\zv_A$ denote the orthogonal projection of the vector
$\zv$ on $\col{A}$, e.g.,
\begin{align} \label{eH}
 \zv_A = AA^\dagger \zv
\end{align}
where $A^\dagger = (A^\top\s[-1] A)^{-1} A^\top$, 
the Moore-Penrose inverse of a full column rank matrix $A$.
We use $\Id$ to denote an identity matrix
and $\Id_q$ when highlighting its dimensions,
$q \times q$. 

Write $\ip{u}{v}$ for the scalar product of $u,v \in \bbR^m$,
let $|u| = \sqrt{\ip{u}{u}}$ and $|A|$ be the induced (spectral)
norm of a matrix  $A$. We denote by $\nu_{m \times q}(\s \cdot
\s)$ a function that given a $m \times \ell$ matrix $A$,
uniquely selects (see Appendix \ref{app:select}) an enumeration
of its singular values $|A| = \Lambda_{11} \ge \cdots \ge
\Lambda_{\min(\ell, m)}$ and outputs a $m \times q$ matrix
$\nu_{m \times q}(A)$ of left singular vectors with the values
$\Lambda_{11}, \dots, \Lambda_{qq}$ in columns $1, \dots, q \le
\min(m,\ell)$. That is,
\begin{align}  \label{select} 
\nu_{m \times q}(A) = \nu_{m \times q}(AA^\top) = \scrL 
\s[4] \cst \s[4]  \scrL^\top A = \Lambda \scrR^\top, \s[8] \scrL^\top \scrL = \Id_q
= \scrR^\top \scrR \s ,
\end{align}
where $\Lambda$ is the $q \times q$ diagonal with entries
$\Lambda_{11}, \dots, \Lambda_{qq}$, and $\scrR$
is the $\ell \times q$ matrix of right singular vectors of $A$.
The $m \times q$ matrix $\nu_{m \times q} (A)$
also corresponds to some
unique choice of eigenvectors of $AA^\top$ with $q$
largest eigenvalues
$\Lambda_{11}^2 \ge \cdots \ge \Lambda^2_{qq}$.

We take $0_q = (0, \dots, 0) \in \bbR^q$ and 
$1_q = (1, \dots, 1) \in \bbR^q$.
Lastly, $\linp$ and $\lsup$ denote
the limit inferior and superior as $p \upto \infty$, and 
$A = A_{p  \times m}$, a sequence of matrices
with dimensions $p \times m$ when at least one of $p$ or $m$
grows to infinity.

\section{Quadratic Optimization Bias}
\label{sec:qob}

We begin with a $p \times p$  covariance  matrix $\bSig$ which has the
decomposition,
\begin{align} \label{bSig}
 \Sigma = B B^\top + \Sv 
\end{align}
for a $p \times q$ full rank  matrix $B$ and some
$p\times p$ invertible, symmetric matrix $\Sv$.

The covariance decomposition in $\req{bSig}$ is often associated with
assuming a factor model (e.g., see \ci{fan2008}).  In the
context of large covariance matrix estimation, the following
approximate factor model framework is by now standard.\footnote{These
conditions originate with \ci{chamroth1983} and Assumption
\ref{asm:afm} closely mirrors theirs as well as those of later
work such  as \ci{fan2008} and \ci{fan2013}.}

\begin{assumption} \label{asm:afm}
The matrices $B = B_{p \times q}$ and $\Sv =\Sv_{p \times
p}$ satisfy the following.
\begin{enumerate}[label=\text{(\alph*}), itemsep=0in] 
\item $0 < \linp \inf_{|v|=1}\ip{v}{\Sv v} < 
\lsup \sup_{|v|=1}\ip{v}{\Sv v} < \infty$.
\label{afm:Omg}
\item $\limp \s (B^\top \s[-1] B)\s /\s[-1] p$ exists as an
invertible $q \times q$ matrix with fixed $q \ge 1$. \label{afm:BB}
\end{enumerate}
\end{assumption}


In the literature on factor analysis, the entries of a column of
$B$ are called loadings, or exposures to a risk factor
corresponding to that column.  Condition~\ref{afm:BB} of
Assumption \ref{asm:afm} states that all $q$ risk factors are
persistent as the dimension grows and implies the $q$ largest
eigenvalues of $\bSig = \bSig_{p \times p}$ grow linearly in
$p$.  Condition~\ref{afm:Omg} states that all remaining variance
(or risk) vanishes in the high dimensional limit and the bulk
($p-q$ smallest) eigenvalues of $\bSig_{p \times p}$ are bounded
in $(0,\infty)$ eventually. The  $\Sv$ matrix is associated with
covariances of idiosyncratic errors, but can have alternative
interpretation (e.g., covariance of the specific return of
financial assets).  Assumption~\ref{asm:afm} implies  $\limp
\scrB^\top \nu_{p \times q}(\bSig) \to \Id_q$ for the $p \times q$
sequence $\scrB = \nu_{p \times q}(B)$ of eigenvectors of
$BB^\top$ with nonzero eigenvalues.  The latter implication
motivates the frequent reference to the $\scrB = \scrB_{p \times q}$ as
the asymptotic principal components of $\bSig_{p \times p}$.

In practice, $\bSig$ is unknown and an estimated
model $\hSig$ is used instead. Let,
\begin{align} \label{hSig}
  \hSig = HH^\top +  \hat{\sv}^2 \Id \s . 
\end{align}
for a full rank $p \times q$ matrix $H$ and a number $\hat{\sv}
> 0$. We assume $q$ is known and allow for $\hat{\sv}$ to depend on $p$
provided this sequence  is bounded in $(0,\infty)$.  We do not
pursue alternative (to $\hat{\sv}^2 \Id$) estimates of $\Sv$
because, as pointed out below, accurate estimation of 
the matrix $\Sv$ is
of secondary concern relative to the accuracy of the estimate
$H$.

For $\zv \in \bbR^p$, the eigenvectors $\nu_{p \times q}(B) = \scrB$
and  $z_H = HH^\dagger z$ per $\req{eH}$, define
\begin{align} \label{optbias}
\s[32] \scrE_p (H) 
= \frac{ \scrB^\top (z-z_H)}{ |z - z_H|}
\s[32] \Big( z = \frac{\zv}{|\zv|}\Big)
\end{align}
assuming $|z -z_H| \neq 0$.
We note $|\scrE_p(H)|\le 1$ and that $\req{optbias}$ is a precursor 
to the
quadratic optimization bias function $\scrE_p(\s \cdot \s )$  in
$\req{obf}$, but the $H$ in $\req{optbias}$  need not have 
orthonormal columns (fn. \ref{fn:obf_orth}). These two 
definitions are equated in Section \ref{sec:pca}.

All results in this section continue to hold with
$\req{optbias}$ redefined with any $\scrB$ such that $BB^\top =
\scrB \Lambda_p^2 \scrB^\top$ with $|\scrB|$ bounded in $p$ and
diagonal $q \times q$  matrix $\Lambda^2_p$, not necessarily the
eigenvalues.  This alternative may be useful for some
applications.

\subsection{Discrepancy of quadratic optima in high dimensions} 
\label{sec:discrepancy}
Returning to the optimization setting of Section 
\ref{sec:motive}, for constants $\cc,\cl \in \bbR$ and 
$\zv \in \bbR^p$, we consider
\begin{align}  \label{hQx}
\hat{Q}(x) 
&= \cc + \cl \s \ip{x}{\zv} - \frac{1}{2} \ip{x}{\hSig x}  
\end{align}
which attains $\max_{x \in \bbR^p}\hat{Q}(x) = \hat{Q}(\hx)
= \cc + \frac{\cl^2 \hn_p^2}{2} $ at the maximizer
$\hx \in \bbR^p$ analogously to $\req{conv}$
but with $\hn_p^2 = \ip{\zv}{\hSig^{-1} \zv}$.
Because $\hat{Q}(\s \cdot \s)$ is not the 
true objective function  $Q (\s \cdot \s)$ in $\req{quad}$,
we are interested in the realized objective $Q(\hx)$. Now,
\begin{align} \label{QhxD}
Q(\hx) 
= \cc +  \frac{\cl^2 \hn^2_p}{2} \s 
\Big( 2 -  \frac{\ip{\hx}{\bSig \hx} }{ \cl^2 \hn_p^2} \Big) 
= \cc +  \frac{\cl^2 \hn^2_p}{2} \s \mf_p  \s , 
\end{align}
which identifies the 
discrepancy $\mf_p$ in $\req{Qhx}$
relative to both $\hat{Q}(\hx)$ and $\req{conv}$.

To avoid division by zero in $\req{optbias}$, we
prevent $\zv \in \bbR^p$ from vanishing and residing entirely 
in $\col{H}$ asymptotically 
(i.e., $|1 - z_H|^2 = 1 - |\zv_H|^2 / |\zv|^2$).\footnote{This
edge case must be treated separately from our analysis and we do not 
pursue it. The entries of $\zv$ may be viewed as the  first $p$ 
entries of an infinite sequence or as rows of a triangular array.} 
We further assume the estimate $H$ has properties consistent 
$B$ in view of Assumption \ref{asm:afm} \ref{afm:BB}.

\begin{assumption} \label{asm:eH} 
Suppose 
$H = H_{p \times q}$ and $\zv = \zv_{p \times 1}$ satisfy
$\lsup |\zv_H|\s / \s |\zv| < 1$ and
$\linp |\zv| \neq 0$. Also,
$\limp (H^\top H)\s /\s[-1] p$ exists as 
a $q \times q$ invertible matrix, 
\end{assumption}


We address the asymptotics of the discrepancy
$\mf_p = 2 -  \ip{\hx}{\bSig \hx}/(\cl \hn_p)^2$ in
$\req{QhxD}$,
letting $BB^\top = \scrB \Lambda_p^2 \scrB$ as above, with
$\scrB = \nu_{p \times q}(B)$ the canonical choice.

\begin{theorem} \label{thm:discrepancy}
Suppose Assumptions \ref{asm:afm} and \ref{asm:eH} hold.
Then, for $u_H = \frac{z- z_H}{|z-z_H|}$, 
\begin{align*} 
\mf_p 
= -    \frac{|  \Lambda_p  \s[2] \scrE_p(H)  |^2 }{\hat{\sv}^2}
+\Big( 2 - \frac{\ip{u_H}{\Sv u_H}}{ \hat{\sv}^2} \Big) 
+  O \Big(|\scrE_p(H)| +|\scrE_p(H)|^2
+ \frac{1}{p}  \Big) \s .
\end{align*}
\end{theorem}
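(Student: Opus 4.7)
The plan is to apply the Woodbury identity to $\hSig^{-1} = (HH^\top + \hat{\sv}^2 I)^{-1}$, expand to first order in the ``small'' quantity $M^{-1} := (H^\top H)^{-1}$, and then compute separately each quadratic form appearing in
\[
\mf_p \;=\; 2 \;-\; \frac{\ip{\hSig^{-1}\zv}{\Sigma \s \hSig^{-1}\zv}}{\ip{\zv}{\hSig^{-1}\zv}}
\]
after splitting $\Sigma = BB^\top + \Sv$. Woodbury, together with the resolvent identity $M^{-1} - (\hat{\sv}^2 I + M)^{-1} = \hat{\sv}^2 M^{-1}(\hat{\sv}^2 I + M)^{-1}$, yields the key decomposition
\[
\hSig^{-1}\zv \;=\; \tfrac{1}{\hat{\sv}^2}(\zv - \zv_H) \;+\; \tilde\epsilon, \qquad \tilde\epsilon = H M^{-1}(\hat{\sv}^2 I + M)^{-1} H^\top \zv,
\]
so that Assumption~\ref{asm:eH} (giving $|M^{-1}| = O(1/p)$ and $|H| = O(\sqrt p)$) implies $|\tilde\epsilon| = O(|\zv|/p)$.

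Using the orthogonality $\zv - \zv_H \perp \zv_H$, the denominator simplifies to
\[
\ip{\zv}{\hSig^{-1}\zv} \;=\; \tfrac{|\zv - \zv_H|^2}{\hat{\sv}^2} + \ip{\zv}{\tilde\epsilon} \;=\; \tfrac{|\zv|^2 |u|^2}{\hat{\sv}^2}\bigl(1 + O(1/p)\bigr),
\]
where $|u|^2 := 1 - |z_H|^2$ is bounded away from $0$ by Assumption~\ref{asm:eH}. For the numerator, writing $BB^\top = \scrB\Lambda_p^2 \scrB^\top$ gives $\ip{y}{BB^\top y} = |\Lambda_p \scrB^\top y|^2$ for $y = \hSig^{-1}\zv$, and substituting the Woodbury expansion while recognising $\scrB^\top(\zv - \zv_H)/|\zv - \zv_H| = \scrE_p(H)$ from \req{optbias} yields
\[
\Lambda_p \scrB^\top y \;=\; \tfrac{|\zv| |u|}{\hat{\sv}^2}\Lambda_p \scrE_p(H) + \Lambda_p \scrB^\top \tilde\epsilon.
\]
Squaring gives the leading term $|\zv|^2 |u|^2 \hat{\sv}^{-4} |\Lambda_p \scrE_p(H)|^2$ together with cross and remainder pieces. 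The $\Sv$ contribution is parallel but smaller: since $|\Sv|$ is bounded by Assumption~\ref{asm:afm}\ref{afm:Omg}, substituting the Woodbury expansion produces $\ip{y}{\Sv y} = (|\zv|^2 |u|^2/\hat{\sv}^4)\ip{u_H}{\Sv u_H} + O(|\zv|^2/(\hat{\sv}^2 p))$ with $u_H = (\zv - \zv_H)/|\zv - \zv_H|$ as in the statement.

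The main bookkeeping challenge, and the source of the three terms in the stated remainder, is the cross term in $|\Lambda_p \scrB^\top y|^2$. Because $|\Lambda_p| = O(\sqrt p)$ and $|\tilde\epsilon| = O(|\zv|/p)$, one has $|\Lambda_p \scrB^\top \tilde\epsilon| = O(|\zv|/\sqrt p)$, so after dividing by the denominator the cross term has size $O(|\scrE_p(H)|)$ rather than $O(1/p)$: the growth of $\Lambda_p$ exactly cancels the decay of $\tilde\epsilon$. The $O(|\scrE_p(H)|^2)$ piece arises by multiplying the leading $O(p)|\scrE_p(H)|^2$ numerator by the $O(1/p)$ correction from the denominator, while the residual $O(1/p)$ collects $|\Lambda_p \scrB^\top \tilde\epsilon|^2$ and all $\Sv$ cross terms. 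Combining these and subtracting from $2$ yields the claimed expansion of $\mf_p$.
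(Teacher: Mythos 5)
Your proposal is correct and takes essentially the same route as the paper: it applies the Woodbury identity to decompose $\hSig^{-1}\zv$ into a leading term proportional to $\zv-\zv_H$ plus an $O(|\zv|/p)$ correction (the paper's Lemma \ref{lem:hw} does the same for the normalized weight $\hw = \hSig^{-1}\zv/\hn_p^2$), splits $\bSig = BB^\top + \Sv$, recognizes $\scrB^\top(\zv-\zv_H)/|\zv-\zv_H| = \scrE_p(H)$, and tracks the cross, squared-leading, and pure remainder contributions exactly as in the paper's accounting, which yields the same three-part $O(|\scrE_p(H)| + |\scrE_p(H)|^2 + 1/p)$ error. Working with the ratio $\ip{\hSig^{-1}\zv}{\bSig\hSig^{-1}\zv}/\ip{\zv}{\hSig^{-1}\zv}$ directly rather than the product $\hn_p^2\tru_p^2$ is a cosmetic restructuring, not a different method.
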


\begin{remark} 
The proof (see Appendix \ref{app:qob}) has a more general
statement by relaxing the rate of growth of the eigenvalues of
$\Lambda^2_p$ to a sequence $\rv=\rv_p$ (rather than $p$). That
is, we only  assume the limits of $B^\top B/\rv_p$ and $H^\top
H/\rv_p$ are invertible matrices. In this case, $O(1/p)$ is
replaced by $O(1/\rv_p)$ above. This shows $|\mf_p|$ is in $
O(\rv_p \s |\scrE_p(H)|^2 )$. 
\end{remark}
Theorem \ref{thm:discrepancy} reveals that $\mf_p$ 
diverges to $-\infty$ unless we find roots  
of $\scrE_p(\s \cdot \s)$, perhaps asymptotically.
Note that $\scrE_p(B) =0$, but
other roots exists (see Section \ref{sec:Hsharp}). 


\begin{lemma} \label{lem:K}
 For any full rank $p \times q$ matrix $H$ with 
$|\zv_H| < |\zv|$ and  any $q \times q$ invertible matrix $K$, 
we have $\scrE_p(H) =\scrE_p(HK)$. 
\end{lemma}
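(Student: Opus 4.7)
\textbf{Proof plan for Lemma \ref{lem:K}.}
The plan is to observe that the function $\scrE_p(H)$ defined in $\req{optbias}$ depends on $H$ only through the orthogonal projector onto $\col{H}$, and then to use the invertibility of $K$ to show that $\col{HK}=\col{H}$, so the projectors coincide.

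First, I would unpack $z_H$. By definition $z_H = HH^\dagger z$ where $H^\dagger = (H^\top H)^{-1} H^\top$, so $HH^\dagger$ is the standard orthogonal projection onto $\col{H}$. Replacing $H$ by $HK$, note that $HK$ is still $p\times q$ and full rank (because $H$ has full column rank $q$ and $K$ is $q\times q$ invertible, hence $HK$ has rank $q$). Then
\begin{align*}
(HK)(HK)^\dagger &= HK\bigl((HK)^\top HK\bigr)^{-1}(HK)^\top \\
&= HK\bigl(K^\top H^\top H K\bigr)^{-1} K^\top H^\top \\
&= HK\, K^{-1}(H^\top H)^{-1}(K^\top)^{-1} K^\top H^\top = HH^\dagger,
\end{align*}
using invertibility of $K$ (and of $H^\top H$, which follows from full column rank of $H$). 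Hence $z_{HK} = HH^\dagger z = z_H$.

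Second, I would substitute this equality into $\req{optbias}$. Since the numerator $\scrB^\top(z - z_H)$ and the denominator $|z - z_H|$ both depend on $H$ only through $z_H$, and since $z_{HK}=z_H$, we obtain $\scrE_p(HK)=\scrE_p(H)$. The hypothesis $|\zv_H|<|\zv|$ (equivalently $|z_H|<1$) ensures $|z-z_H|\neq 0$ so both sides are well-defined.

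There is no substantive obstacle here: the only thing to be careful about is algebraic bookkeeping with the pseudoinverse of $HK$, and specifically invoking invertibility of $K$ at the right moment to factor $(K^\top H^\top H K)^{-1}$. Once that identity is written down, the lemma follows immediately from $\req{optbias}$.
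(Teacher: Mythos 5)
Your proposal is correct and takes essentially the same route as the paper: both compute $(HK)(HK)^\dagger$ directly, factor $(K^\top H^\top H K)^{-1}$ using invertibility of $K$, conclude $z_{HK}=z_H$, and substitute into the definition of $\scrE_p$. No meaningful differences.
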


\begin{proof} This follows by a direct verification
using the definition in $\req{eH}$. 
 \begin{align*}
\zv_{HK}  &= (HK)^\dagger \zv = 
 (HK) ( (HK)^\top (HK) )^{-1} (HK)^\top \zv \\
&= (HK) K^{-1} (H^\top H)^{-1} K^{-\top} (HK)^\top \zv 
\\&= H (H^\top H)^{-1} H^\top \zv = H^\dagger \zv = \zv_H
\end{align*}
 and with the definition of $\scrE_p(\s
\cdot \s)$ in $\req{optbias}$ we obtain the desired result.
\end{proof}

Lemma \ref{lem:K} pinpoints what constitutes a poor \tq{plug-in}
covariance estimator $\hSig$. For example, the column lengths of
$H$ have no effect on the quadratic optimization bias
$\scrE_p(H)$.  For the eigenvalue decomposition $HH^\top = \scrH
\qeig_p^2 \scrH^\top$ (with $K = \qeig_p^{-1}$ in Lemma
\ref{lem:K}), we see that $\scrE_p(H) = \scrE_p(\scrH)$.  Thus,
to fine-tune $\hSig$ for quadratic optimization, one need
correct only the basis $\col{H}$. This amounts to
finding the (asymptotic) roots of the function $\scrE_p(\s \cdot
\s)$.  If the convergence to a root is sufficiently rapid, one
may then estimate $\ip{u_H}{\Sv u_H}$ closely by $\hat{\sv}^2$
to bring the discrepancy $\mf_p$ to one per Theorem
\ref{thm:discrepancy}.  We conclude this section by showing that
for many applications the rate of convergence of $\scrE_p(H)$ is
less important than Theorem \ref{thm:discrepancy} suggests.

\subsection{Applications}
\label{sec:risk}
To illustrate some important examples in practice, we 
consider the following canonical, constrained 
optimization problem.
\begin{equation} \label{minvar}
\begin{aligned}
  \min_{w \in \bbR^p}  & \s[4] \est^2 \\
  \est^2 &= \ip{w}{\hSig w} \\
  \ip{w}{\zv} &= 1   
\end{aligned}
\end{equation}
Now, $\hat{Q}(\s \cdot \s)$  in $\req{hQx}$ is the Lagrangian
for $\req{minvar}$ with  $\cc = 0$ and $\cl =
\ip{\zv}{\hSig^{-1} \zv}^{-1}$, which decays as $1/p$ under
Assumption \ref{asm:eH}.  The minimizer $\hw \in \bbR^p$ of
$\req{minvar}$ corresponds to the weights of a minimum variance
portfolio of financial assets with $\zv = 1_p$ implementing the
\tq{full-investment} constraint. Minimum variance and the more
general mean-variance optimized portfolios are  widely used in
finance.  Here, the $p$ entries of a column of $B$ in
$\req{bSig}$ represent the exposures of $p$ assets to that
risk factor, e.g.,  market risk (bull/bear market), industry
risk (energy, automotive, etc.), climate risk (migration,
drought, etc.), innovation risk (Chat GPT, etc). Similar
formulations based on $\req{minvar}$ arise in signal-noise
processing and climate science (see Appendix
\ref{app:lit}).\footnote{In signal-noise processing,
$\req{minvar}$ maximizes the signal-to-noise ratio of a
beamformer with $\zv$ referred to as the \tq{steering vector}.
The same is done for optimal fingerprinting in climate science
with $\zv$ called the \tq{guess pattern}. We review this
literature with emphasis on estimation of $\bSig$ in
Appendix~\ref{app:lit}.}

Continuing with the above example, the minimum $\est^2$
of $\req{minvar}$  corresponds to the variance of the 
estimated portfolio $\hw$, while the expected out-of-sample variance
is,
\begin{align} \label{tru}
 \tru_p^2 = \ip{\hw}{\Sigma \hw}   \s .
\end{align}
We have $\mf_p = 2 - \hn_p^2 \tru_p^2$ (see Appendix
\ref{app:qob}) and, under the conditions of
Theorem~\ref{thm:discrepancy},
\begin{align} \label{tru_asymp}
 \tru_p^2 =
\frac{|\Lambda_p \scrE_p(H)|^2}{p |z-z_H|^2} 
+ O ( 1 \s /\s[-2] p ) 
\end{align}
because $|\zv|^2 = |1_p|^2 = p$. Because $|\Lambda_p|^2 /p$ 
converges in $(0,\infty)$ as $p \upto \infty$ under our 
Assumption~\ref{asm:afm}\ref{afm:BB},
we achieve (in expectation) an asymptotically riskless portfolio
provided the convergence $|\scrE_p(H)|\to 0$ and irrespective
of its rate.

\section{Principal Component Analysis}
\label{sec:pca}

Let $\Y$ denote a $p \times n$ data matrix of $p$ variables
observed at $n$ dates which, for a random $n \times q$ matrix
$\X$ and random $p \times n$  matrix $\E$, follows the
linear model,
\begin{align} \label{data}
 \Y = B \X^\top \s[-1] +\s \E  \s .
\end{align}
The $p \times q$ matrix $B$ forms the unknown to be estimated,
and only $\Y$ is observed, while $\X$ is a matrix of 
latent variables and the matrix $\calE$ represents an additive noise.

The {\pca} estimate $H$ of $B$ may be derived from $q \ge 1$
leading terms of the spectral decomposition of the $p \times p$
sample covariance matrix $\Sam$ (see Remark \ref{rem:population}),
i.e., 
\begin{align} \label{spectral}
 \Sam 
= \tsum_{(\seig^2,\s h)} \seig^2 hh^\top = HH^\top + G
\end{align}
where the sum is over all eigenvalue/eigenvector pairs
$(\seig^2,h)$ for $h \in \bbR^p$ of unit length (i.e.,
$|h|=1$).  
The $j$th
column $\eta$ of the $p \times q$ matrix $H$ 
in $\req{spectral}$ is taken as $\eta =
\seig h$ where $\seig^2$ is the $j$th largest eigenvalue of
$\Sam$. The matrix $G = \Sam - HH^\top$ forms the residual.
Ordering the eigenvalues of $\Sam$ as $\seig^2_{1,p} \ge
\seig^2_{2,p} \ge \cdots \ge \seig^2_{p,p}$, we have 
\begin{align} \label{pcs}
  \scrH = H \qeig^{-1}_p \s; 
\s[32] H^\top H = \qeig_p^2  \s ,
\end{align}
where $\qeig_p^2$ is a $q \times q$ diagonal matrix with entries
$(\qeig_p^2)_{jj} = \seig^2_{j,p}$ and the columns
of the matrix $\scrH$ are the associated sample eigenvectors
$h$ in $\req{spectral}$ with $\scrH^\top \scrH
= \Id_q$.

 Since data is often centered in practice, in addition to
$\req{data}$, we consider the eigenvectors $\scrH$ of
the transformed $p \times n$ data
matrix $\Y \s \Jc$  where for any $\cn \in \bbR^n$, 
\begin{align} \label{Jc}
\s[64] \Jc = \Id - \frac{\s[5] \cn \s \cn^\top}{|\cn|^2} 
\s[32]
\big( \scrH = 
\nu_{p \times q}(\Y \Jc) = \nu_{p \times q}(\Sam)  \big) \s ,
\end{align}
and the sample covariance in $\req{spectral}$ is given by
$\Sam = \Y\s[1.5] \Jc \Y^\top\s[-1] / n$ since
$\Jc\s \Jc^\top = \Jc$.
Centering the $n$ columns of $\Y$ 
entails the choice $\cn = 1_n$ in $\req{Jc}$ but
we allow $\Jc = \Id$.

\begin{remark} \label{rem:population}
The identity $\Exp( \Sam) = \bSig = BB^\top  + \Sv$ 
is the aim of centering and holds under well-known conditions,
e.g., $\Y$ has i.i.d. columns,  $\Exp( \X^\top\s[-2] \Jc \X) \s / n = \Id$
with the $\X$ and $\E$  uncorrelated.
We do not require that $\Exp(\Sam) = \bSig$ for the
results of this section.
\end{remark}

 Our results require the following
signal-to-noise ratio (diagonal) matrix $\snr$, where the \tq{noise} is
specified in terms of the average of the bulk eigenvalues,  
$\kappa^2_p$ (c.f., $\req{bulk2}$).
\begin{align} \label{snr-bulk}
\s[64] \snr^2 =  \Id_q - \kappa^2_p \s  \qeig_p^{-2} \s; \s[32]
 \kappa^2_p = \frac{\sum_{j > q} \scrs^2_{j,p}}{n_+ -q}
\s[32] (n_+ > q) \s ,
\end{align}
where $n_+$ is the number of nonzero eigenvalues of $\Sam$. When
$p > n$, ensuring per $\req{snr-bulk}$ that $n_+ > q$  implies,
for $\Y$ of full rank, that $n_+ = n$ for $\Jc = \Id$ and $n_+ =
n-1$ otherwise.  For $p > n$, the eigenvectors $\nu_{n \times
q}(\Jc \Y^\top)$ and eigenvalues $\qeig_p^2$ may also be
computed more efficiently using the smaller $n \times n$ matrix
$\Jc \Y^\top \Y \Jc / n$ which shares its nonzero eigenvalues
with $\Sam$.  This computation is represented as follows
(c.f. $\req{Jc}$).
\begin{align} \label{efficient}
\s[32] \scrH
 = \Y \s \nu_{n \times q}(\Jc \Y^\top) \s \qeig_p^{-1}
 /\s[-1] \sqrt{n}
\end{align}

The {\pca}--estimated model for $\bSig = BB^\top + \Sv$ 
takes $H = \scrH \qeig_p$ in $\req{pcs}$ and  our
estimator $\hSig = HH^\top + \hat{\sv}^2 \Id$ for the simple choice
$\hat{\sv}^2 = n \kappa^2_p \s /\s[-1] p$ which suffices in view of
Section \ref{sec:qob}. We prove 
(Theorem \ref{thm:pca}) that $\hat{\sv}^2$ consistently
estimates the average idiosyncratic variance $\tr(\Sv)/ \s[-1]
p$ as $p \upto \infty$, under our upcoming Assumption
\ref{asm:asymp}.\footnote{The residual $G = \Sam - HH^\top$ is
typically regularized to form an robust estimate of $\Sv$.
Examples include zeroing out all but the diagonal of this
matrix,  and the POET estimator \cite{fan2013}.}

Sections \ref{sec:pca_optbias}--\ref{sec:pca_asymp} below define
$\scrB = \nu_{p \times q}(B)$, the $q$ eigenvectors of $BB^\top$
associated with the largest $q$ eigenvalues (as other choices
were possible for Theorem~\ref{thm:discrepancy} and the
definition of $\scrE_p(H)$ in $\req{optbias}$). We do not
require $\Exp(\Sam) = \bSig$ per Remark \ref{rem:population}.

\subsection{Norm of the optimization bias for {\pca}} 
\label{sec:pca_optbias}
We analyze the asymptotics $\scrE_p(H)$ for the {\pca} estimate
$H$.  Lemma \ref{lem:K} with $K = \qeig_p$ in $\req{pcs}$ and
$\scrH^\top \scrH = \Id$ imply that 
$z_H = HH^\dagger z = \scrH\scrH^\top z = z_\scrH$ 
and $\ip{z}{z_\scrH} = |z_\scrH|^2$, which reduces $\req{optbias}$ to
\begin{align} \label{optbias_gps}
\scrE_p(H)  = \scrE_p(\scrH) 
= \frac{\scrB^\top (z - z_\scrH)}
{ |z - z_\scrH| } 
= \frac{\scrB^\top z - (\scrB^\top \scrH)
\s (\scrH^\top z)}{\sqrt{1 - |\scrH^\top z|^2}} \s .
\end{align}

The unknowns in $\req{optbias_gps}$ are $\scrB^\top z$ and
$\scrB^\top \scrH$ and we provide theoretical evidence that they
cannot be estimated from data in Section \ref{sec:impossible}
without very strong assumptions. Here, we nevertheless obtain an
estimate of the length $|\scrE_p(H)|$.
The following addresses a division by zero in 
$\req{optbias_gps}$. Recall that
$z = \frac{\zv}{|\zv|}$ and
$\zv_B = BB^\dagger \zv$ per $\req{eH}$.  

\begin{assumption} \label{asm:eB}
 $\lsup |\zv_B|\s /\s |\zv| < 1$ 
and $\linp |\zv| \neq 0$ 
for $\zv = \zv_{p \times 1}$. 
\end{assumption}

Our next assumption concerns the
matrices $\X$ and $\E$ in $\req{data}$. These 
guarantee that almost all realizations of the data $\Y$
have full rank for sufficiently large $p$, allowing us
to treat $n_+$ in $\req{snr-bulk}$ as 
$n_+ = n$ when $\Jc = \Id$
and $n_+ = n-1$ otherwise.

\begin{assumption} \label{asm:asymp}
Assumption \ref{asm:afm} on the matrices $B = B_{p \times q}$
 and $\Sv = \Sv_{p \times p}$ holds and the following 
conditions hold 
for $\X$ and sequences 
$\E = \E_{p \times n}$ and $\zv = \zv_{p \times 1}$.
\begin{enumerate}[label=\text{(\alph*)}, itemsep=0.0in]
\item Only \textup{$\Y$} is observed 
(the variables \textup{$\X,\calE$} in $\req{data}$ are latent). 
\label{asm:latent}
\item The true number of factors $q$ is known and
$n_+ > q$ (with $n$ fixed). \label{asm:qnum}
\item \textup{$\X^\top \s[-2] \Jc \X$} is ($q \times q$) invertible 
almost surely (and does not depend on $p$).  \label{asm:XX}
\item $\limp\s \E^\top \E /p
= \sv^2 \Id$ almost surely for some constant $\sv > 0$.
\label{asm:EE}
\item $\lsup \|\s \Jc \s \E^\top B  \| \s / \s[-1] p  = 0$ 
almost surely for some
matrix norm $\|\s \cdot \s \|$ on $\bbR^{n \times q}$. \label{asm:EB}
\item $\lsup | \s \Jc\s \E^\top z|\s /\s[-1] \sqrt{p} = 0$
almost surely where $z = \zv / |\zv|$.  \label{asm:Ee}
\end{enumerate}
\end{assumption}

These conditions are discussed below.  Our fundamental result on
{\pca} (in conjunction with Theorem \ref{thm:discrepancy}) may
now be stated. Its proof is deferred to Appendix \ref{app:pca}.

\begin{theorem} \label{thm:pcabias}
Suppose Assumptions \ref{asm:eB} \& \ref{asm:asymp} hold. Then, 
almost surely, 
\begin{align} \label{pcabias}
\limp \Big( |\scrE_p(\scrH)|
 - \frac{ |\Pd \scrH^\top z| } {|z - z_\scrH|} \Big) = 0 \s ,
\end{align}
where $\Pd =  (\snr^{-1} -\snr)$. Moreover, 
the length of the \textup{\pca} optimization bias
$|\scrE_p(\scrH)|$ is eventually in $(0,\infty)$ almost surely, 
provided $\limp \scrH^\top z \neq 0_q$
(see Corollary \ref{cor:Bzero}).
\end{theorem}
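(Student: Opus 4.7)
The plan is to decompose the numerator of $\scrE_p(\scrH)$ by writing $\scrH = \scrB A + W$, where $A = \scrB^\top \scrH$ and $W = (\Id - \scrB \scrB^\top)\scrH$. Since $\scrH^\top \scrH = \Id_q$, this gives $A^\top A + W^\top W = \Id_q$ and $\scrH^\top z = A^\top \scrB^\top z + W^\top z$, so $\scrB^\top z - (\scrB^\top \scrH)(\scrH^\top z) = (\Id_q - AA^\top)\scrB^\top z - A W^\top z$. The proof then reduces to (a) $W^\top z \to 0$, (b) $A^\top A \to \snr^2$ almost surely, and (c) a polar-decomposition identity matching $(\Id_q - AA^\top)\scrB^\top z$ with $\Pd \scrH^\top z$.

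For (a) and (b) I would use $\scrH = Y\scrU\,\qeig_p^{-1}/\sqrt{n}$ from \req{efficient}, with $\scrU = \nu_{n\times q}(\Jc Y^\top)$ satisfying $\Jc \scrU = \scrU$. Since $(\Id - \scrB\scrB^\top)B = 0$, one obtains $W = (\Id - \scrB\scrB^\top)\E \Jc \scrU\,\qeig_p^{-1}/\sqrt{n}$. For (a), the triangle inequality bounds $|W^\top z|$ by $|\qeig_p^{-1}|/\sqrt{n}$ times $|\Jc\E^\top z| + |\Jc\E^\top \scrB|\cdot|\scrB^\top z|$; Assumption~\ref{asm:asymp}\ref{asm:Ee}--\ref{asm:EB}, combined with $|(B^\top B)^{-1/2}| = O(1/\sqrt{p})$ to pass between $B$ and $\scrB$, makes each of these terms $o(\sqrt{p})$, and since $|\qeig_p^{-1}| = O(1/\sqrt{p})$ this yields $|W^\top z| \to 0$ a.s. For (b), I write $A^\top A = \Id_q - W^\top W$ and expand $\E^\top(\Id - \scrB\scrB^\top)\E = \E^\top\E - \E^\top\scrB\scrB^\top\E$. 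The cross term is $o(p)$ by the same $\scrB$-bound, while Assumption~\ref{asm:asymp}\ref{asm:EE} gives $\Jc\E^\top\E\Jc/p \to \sv^2 \Jc$ and $\scrU^\top \Jc \scrU = \Id_q$, yielding $W^\top W = (p\sv^2/n)\,\qeig_p^{-2} + o(1)$. The main obstacle is the identification $\kappa_p^2 = p\sv^2/n + o(p)$: I would establish it via the \hl\ limit $\Jc Y^\top Y \Jc/p \to \Jc X \Omega_B X^\top \Jc + \sv^2 \Jc$ (a.s., with $\Omega_B = \limp B^\top B/p$), whose top $q$ eigenvalues absorb the rank-$q$ signal while the remaining nonzero eigenvalues concentrate at $\sv^2$, matching the bulk of $\Sam$ up to the factor $n$. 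Thus $W^\top W = \kappa_p^2\,\qeig_p^{-2} + o(1) = \Id_q - \snr^2 + o(1)$, giving (b).

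For (c), the polar decomposition yields $A = U\tilde\snr$ with $U$ orthogonal and $\tilde\snr = (A^\top A)^{1/2}$. Because $\snr^2$ has a nonzero limit with entries $\tilde\lambda_j/(\tilde\lambda_j + \sv^2) \in (0,1)$, $\snr$ is bounded below eventually, so $\tilde\snr = \snr + o(1)$ and $\tilde\Pd := \tilde\snr^{-1} - \tilde\snr = \Pd + o(1)$. The identity $\Id_q - \tilde\snr^2 = \tilde\Pd\,\tilde\snr$ and orthogonality of $U$ then give
\[
|(\Id_q - AA^\top)\scrB^\top z| = |(\Id_q - \tilde\snr^2)\, U^\top \scrB^\top z| = |\tilde\Pd\, A^\top \scrB^\top z|.
\]
Substituting $A^\top \scrB^\top z = \scrH^\top z - W^\top z$, using (a), and replacing $\tilde\Pd$ by $\Pd$ up to $o(1)$, the right-hand side equals $|\Pd \scrH^\top z| + o(1)$. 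Since also $|AW^\top z| = o(1)$, the numerator of $\scrE_p(\scrH)$ has norm $|\Pd \scrH^\top z| + o(1)$. Finally, Assumption~\ref{asm:eB} combined with $|\scrH^\top z|^2 \le |\tilde\snr^2|\cdot|\scrB^\top z|^2 + o(1)$ (limit strictly below $1$) keeps $|z - z_\scrH|$ bounded below eventually, and dividing yields \req{pcabias}. The moreover statement follows because each diagonal entry $\snr_j^{-1} - \snr_j$ of $\Pd$ has a strictly positive limit, so $|\Pd \scrH^\top z|/|z - z_\scrH|$ remains bounded away from $0$ whenever $\limp \scrH^\top z \neq 0_q$, forcing $|\scrE_p(\scrH)| \in (0,\infty)$ eventually.
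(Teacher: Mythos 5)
Your proof is correct and reaches the conclusion by a genuinely different algebraic route. Both arguments ultimately rest on the same two facts — $\scrH^\top\scrB\scrB^\top\scrH \to \snr^2$ and $\scrH^\top z - (\scrH^\top\scrB)\scrB^\top z \to 0$, which are Theorem~\ref{thm:pca}\ref{pca:HBBH}--\ref{pca:HBBz} and which you partly re-derive using $W=(\Id-\scrB\scrB^\top)\scrH$ rather than citing the theorem — but the final matching step is organized quite differently. The paper expands $|\scrB^\top(z-z_\scrH)|^2$ into the three quadratic forms $|\scrB^\top z|^2 - 2\ip{\scrH^\top\scrB\scrB^\top z}{\scrH^\top z} + |\scrB^\top\scrH\scrH^\top z|^2$, evaluates each limit separately (the first via Corollary~\ref{cor:inv} and $\req{Hzlen}$), and checks the three limits recombine to $|\Psi^{-1}\scrH^\top z|^2 - 2|\scrH^\top z|^2 + |\Psi\scrH^\top z|^2 = |\Pd\scrH^\top z|^2$. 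You instead exploit the \emph{exact}, non-asymptotic polar identity $|(\Id_q - AA^\top)\scrB^\top z| = |(\tilde\snr^{-1}-\tilde\snr)\,A^\top\scrB^\top z|$ with $A=\scrB^\top\scrH$ and $\tilde\snr=(A^\top A)^{1/2}$, and only then pass to the limit by two substitutions: $A^\top\scrB^\top z \to \scrH^\top z$ (from $W^\top z\to 0$) and $\tilde\snr\to\snr$ (from $A^\top A\to\snr^2$ together with continuity of $x\mapsto x^{1/2}$ and $x\mapsto x^{-1}$ near $\snr^2$, whose eigenvalues stay a.s.\ bounded away from $0$ and $1$ by \ref{pca:HBBH}). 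This compresses the paper's three term-by-term evaluations into one exact identity plus two substitutions, and in particular avoids any explicit appeal to Corollary~\ref{cor:inv}; the trade-off is that you need $A$ eventually invertible for the polar factor, which is Lemma~\ref{lem:BHinv} and already implied by \ref{pca:HBBH}. Your denominator bound and the "moreover" part mirror the paper's reasoning and are sound. One minor suggestion: state explicitly that the passage $\tilde\snr\to\snr$ and $\tilde\Pd\to\Pd$ relies on each $\Psi^2_{\jj}$ having an a.s.\ limit in the open interval $(0,1)$, so that the square-root and inverse maps are locally Lipschitz there.
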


We remark that  $\hz = \frac{\Pd \scrH^\top z } {|z - z_\scrH|}
\in \bbR^q$ is computable solely from the data  $\Y$ with almost
every $|\hz|$ bounded in $[0,\infty)$ eventually.  Theorem
\ref{thm:pcabias} demonstrates that {\pca}, and sample
eigenvectors $\scrH$ in particular, lead to poor \tq{plug-in}
covariance estimators for  quadratic optimization unless every
column of $\scrH$ is eventually orthogonal to $\zv$.  So
typically, the discrepancy $\mf_p$ in $\req{QhxD}$ between the
estimated and realized optima diverges to $-\infty$ as $p$ grows
and at a linear rate. In the portfolio application of
Section~\ref{sec:risk}, this covariance results in
strictly positive expected (out-of-sample) portfolio risk
$\tru_p$ per $\req{tru}$--$\req{tru_asymp}$ asymptotically,
which may be approximated by using $|\hz|$.

We make some remarks on Assumption \ref{asm:asymp}. Conditions
\ref{asm:latent}--\ref{asm:XX} are straightforward, but we
mention that the invertibility of $\X^\top\s[-2] \Jc \X$ is 
closely related to the requirement that $n_+ > q$ in condition
\ref{asm:qnum}.
Condition~\ref{asm:XX}
fails when $\cn$ in $\req{Jc}$ lies in $\col{\X}$ but such a case is dealt
with by rewriting the data in $\req{data}$ as  
$\Y = \alpha \cn^\top + B_\alpha
\X^\top_\alpha + \E$ for some $B_\alpha$ and $\X_\alpha$ of $q-1$
columns each, and some mean vector $\alpha \in \bbR^p$.  Then, 
we have $\Y \Jc = B_\alpha
\X^\top_\alpha \Jc + \E \Jc$, and it
only remains to check if condition \ref{asm:XX} holds with
the matrix $\X_\alpha$ replacing $\X$.
Conditions \ref{asm:EE}--\ref{asm:Ee} require that
strong laws of large numbers hold for 
the columns  of the sequence $\calE =
\calE_{p \times n}$. These roughly state that the columns of
$\calE$ are stationary with weakly dependent entries having
bounded fourth moments. All three are easily verified for
the $\calE_{p \times n}$ populated by i.i.d. Gaussian random
entries. Lastly, we remark that if 
conditions \ref{asm:EB} and \ref{asm:Ee} hold for
$\Jc = \Id$ they hold for any $\Jc$.  

Since in practice both $n$ and $p$ are finite, we can make some
refinements to the definitions in $\req{snr-bulk}$ based
on some classical random matrix theory. In particular, it is
well known that when the aspect ratio $\np$ converges
in $[0,\infty)$ (in our
case to zero), the eigenvalues of $\E^\top \E/p$
have support that is approximately between
$\sv^2 (1 - \sqrt{\np})^2$ and $\sv^2 (1 + \sqrt{\np})^2$  
for the constant $\sv^2$ in condition \ref{asm:EE}.
We can then define,
\begin{align} \label{bulk2}
  \kappa_p^2  = 
\bigg( \frac{1 + \np}{n_+ - q + \np} \bigg)
\sum_{j > q} \scrs^2_{j,p}
\end{align}
which is a Marchenko-Pastur type adjustment
to $\kappa_p^2$ (and $\snr^2$) defined in $\req{snr-bulk}$.
When the eigenvalues of $\E^\top \E/p$ obey the Marchenko-Pastur
law, this $\kappa^2_p$ is advisable.

\subsection{{\hl} results for {\pca}} 
\label{sec:pca_asymp}
Theorem \ref{thm:pcabias} is essentially a
corollary of our next result, which is of independent
theoretical interest for the {\hd} literature.

\begin{theorem} \label{thm:pca}
Suppose Assumption \ref{asm:asymp} holds. Then, hold almost
surely.
\begin{enumerate}[label=\text{(\alph*)}, itemsep=0.04in]
\item $\limp  \Psi \qeig_p K_p^{-1} = \Id$ where $K_p$ is a 
$q \times q$ diagonal matrix with $(K^2_p)_{jj}$, the $j$th 
largest eigenvalue of the $p \times p$ matrix  
$B\s V_n B^\top$ where
$V_n = \X^\top\Jc \X / n$.
\label{pca:Sp}
\item $ \limp  \frac{n \kappa^2_p}{p \sv^2}  = 1$ for
the constant $\sv$ of Assumption \ref{asm:asymp}\ref{asm:EE}. 
\label{pca:kappa}
\item $\limp |\scrH^\top \scrB \scrB^\top \scrH - \snr^2 |
= 0$ and every $\Psi^2_{\jj}$ is eventually in $(0,1)$.
\label{pca:HBBH}
\item $\limp |\scrH^\top z -(\scrH^\top \scrB) \scrB^\top z| = 0$.
\label{pca:HBBz}
\end{enumerate}
\end{theorem}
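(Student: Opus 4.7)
The plan is to reduce everything to the companion matrix $\Jc\s\Y^\top\s[-1]\Y\Jc/p$, which shares nonzero eigenvalues with $n\s\Sam/p$ and whose top-$q$ left singular vectors yield $\scrH$ through~$\req{efficient}$. Substituting $\Y = B\X^\top + \E$ and invoking Assumption~\ref{asm:asymp}, I would first establish the almost sure limit
$\tfrac{1}{p}\Jc\s\Y^\top\s[-1]\Y\s\Jc \to \Jc\X\Lambda\X^\top\Jc + \sv^2\Jc$,
where $\Lambda := \limp B^\top\s[-1] B/p$, by splitting into signal, noise, and cross pieces and applying~\ref{afm:BB}, \ref{asm:EE}, and~\ref{asm:EB} respectively. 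On $\ran(\Jc)$ the limit acts as $\Jc\X\Lambda\X^\top\Jc+\sv^2\s\Id_{\ran(\Jc)}$, whose nonzero spectrum consists of $\{\lambda_j(nV_n\Lambda)+\sv^2\}_{j\le q}$ (well-separated from $\sv^2$ by the invertibility in~\ref{asm:XX}) together with $\sv^2$ of multiplicity $n_+\s[-1]-q$.

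Parts~(a) and~(b) then follow from Weyl-type continuity: the spectrum above transfers to that of $\Jc\s\Y^\top\s[-1]\Y\Jc$ (and hence, after dividing by $n$, of $\Sam$), so that $\seig_{j,p}^2 = p\lambda_j(V_n\Lambda) + p\sv^2/n + o(p)$ for $j\le q$ and $\seig_{j,p}^2 = p\sv^2/n + o(p)$ for $q<j\le n_+$. Averaging the bulk yields $\kappa_p^2 = p\sv^2/n + o(p)$, which is~(b). Combined with $(K_p^2)_{\jj}/p \to \lambda_j(V_n\Lambda)$ (the $j$th eigenvalue of $V_n(B^\top\s[-1] B/p)$), this gives $(\qeig_{\jj,p}^2 - \kappa_p^2)/(K_p^2)_{\jj} \to 1$; recognizing $\snr^2\qeig_p^2 = \qeig_p^2 - \kappa_p^2\Id_q$ delivers~(a). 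Strict positivity of the limits of $(K_p^2)_{\jj}/p$ and $\kappa_p^2/p$ (by~\ref{afm:BB}, \ref{asm:XX}, \ref{asm:EE}) further gives the eventual boundedness of $\snr^2_{\jj}$ in $(0,1)$.

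For~(c) and~(d), I would use~$\req{efficient}$ to write $H = \scrH\qeig_p = (B\X^\top + \E)\scrR/\sqrt n$ with $\scrR := \nu_{n\times q}(\Jc\s\Y^\top)$ satisfying $\Jc\scrR=\scrR$. Expanding both $H^\top H = \qeig_p^2$ and $H^\top\scrB\scrB^\top H$, the cross pieces reduce (via $\scrR=\Jc\scrR$) to expressions containing $\Jc\s\E^\top B$ and vanish at rate $o(p)$ by~\ref{asm:EB}; using the SVD $B = \scrB\Lambda_p R_B^\top$ one finds $\scrB\scrB^\top B = B$ and $|\scrB^\top\s[-1]\E\scrR| = o(\sqrt p)$, so the signal contributions to $H^\top H$ and $H^\top\scrB\scrB^\top H$ agree to leading order. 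Hence
\begin{align*}
H^\top H - H^\top\scrB\scrB^\top H
= \tfrac{1}{n}\s\scrR^\top\s[-2]\E^\top\s[-1]\E\scrR + o(p)
= \kappa_p^2\s\Id_q + o(p)
\end{align*}
by~\ref{asm:EE} and~(b), and conjugating by $\qeig_p^{-1}$ yields $\scrH^\top\scrB\scrB^\top\scrH = \Id_q - \kappa_p^2\qeig_p^{-2} + o(1) = \snr^2 + o(1)$, proving~(c). For~(d), the telescoping $\scrB R_B\Lambda_p\cdot\Lambda_p^{-1}R_B^\top B^\top = \scrB\scrB^\top$ reduces the same expansion to
\begin{align*}
\scrH^\top z - (\scrH^\top\scrB)\scrB^\top z
= \tfrac{1}{\sqrt n}\qeig_p^{-1}\scrR^\top\s[-2]\E^\top(z - z_B),
\end{align*}
which, using $\scrR=\Jc\scrR$ and the bounds $|\Jc\s\E^\top z|=o(\sqrt p)$ from~\ref{asm:Ee} and $|\Jc\s\E^\top z_B|=o(\sqrt p)$ (since $z_B = B(B^\top\s[-1] B)^{-1}B^\top z$ with $|(B^\top\s[-1] B)^{-1}B^\top z|=O(1/\sqrt p)$ and $|\Jc\s\E^\top B|=o(p)$ by~\ref{asm:EB}), combines with $|\qeig_p^{-1}|=O(1/\sqrt p)$ to give $o(1)$.

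The main obstacle is bookkeeping: tracking cross terms and $o(p)$ residuals carefully, and ensuring the per-noise-eigenvalue rate is precisely $p\sv^2/n$ rather than a naive trace-based estimate. This rate emerges because $\E\Jc\E^\top/n$ has exactly $n_+$ nonzero eigenvalues each concentrating near $p\sv^2/n$, which follows from $\E^\top\E/p\to\sv^2\Id$ (by~\ref{asm:EE}) combined with $\Jc$ acting as the identity on its own range.
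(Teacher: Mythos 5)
Your proposal is correct and follows essentially the same route as the paper: pass to the $n\times n$ companion matrix $\Jc\Y^\top\Y\Jc/p$, obtain its almost sure limit via Assumption~\ref{asm:asymp}, transfer eigenvalues by Weyl's inequality to get parts~(a)--(b), and then use the representation $\scrH = \Y\Jc\scrR\qeig_p^{-1}/\sqrt{n}$ to turn $\scrH^\top\scrB\scrB^\top\scrH$ and $\scrH^\top z$ into quadratic/linear forms in $\E$ that vanish under~\ref{asm:EE}--\ref{asm:Ee}. The only cosmetic difference is that the paper channels the cancellation for~(c) through the identity $\scrH^\top\scrB\scrB^\top\scrH = (BB^\dagger\scrH)^\top(BB^\dagger\scrH) = \Lw^\top Z_p Z_p^\top \Lw/p$ together with $\Lw^\top W_p\Lw = p\Id$, whereas you subtract $H^\top\scrB\scrB^\top H$ from $H^\top H = \qeig_p^2$ directly and conjugate by $\qeig_p^{-1}$ at the end; one minor imprecision is that the $\X B^\top\E$-type cross pieces in that difference actually cancel identically (since $B^\top\scrB\scrB^\top = B^\top$) rather than merely being $o(p)$, but citing~\ref{asm:EB} there is harmless.
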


The proof is deferred to Appendix \ref{app:pca}. Parts
\ref{pca:Sp}--\ref{pca:kappa} should not surprise those
well versed in the {\hl} literature. Nevertheless, these
limit theorems for eigenvalues provide new content
by supplying estimators, not just asymptotic descriptions.

\begin{remark} \label{rem:eigenvals}
Parts \ref{pca:Sp}--\ref{pca:kappa} of Theorem \ref{thm:pca}
supply improved eigenvalue estimates for the \textup{\pca}
covariance model when $\Exp(\Sam) = \bSig$, and while these have
no effect on the optimization bias $\scrE_p(\scrH)$, we
summarize them.  Part \ref{pca:Sp} implies $H \Psi^2 H^\top =
\scrH (\qeig_p \snr)^2 \scrH^\top$ is an improved estimator
(relative to $HH^\top$) of the population matrix $BB^\top$. Part
\ref{pca:kappa} implies that  $\hat{\sv}^2 = n \kappa_p^2\s
/\s[-1] p$ is an asymptotic estimator of $\tr(\Sv)/p$ where $\Sv
= \Exp (\calE \Jc \calE^\top\s[-2]/n)$.
To see this, w.l.o.g. take $\Jc = \Id$, and note that the trace
$\tr$ and the expectation $\Exp$ commute. Then, $\tr\s (\Sv) =
\tr\s (\Exp (\calE \calE^\top/n)) =   (p/n)\s \Exp (\tr \s
(\calE^\top \calE / \s[-1] p)) $ and since $\calE^\top \calE
/\s[-1] p \to \sv^2 \Id_{n \times n}$
(Assumption~\ref{asm:asymp}\ref{asm:EE}) provided $\calE^\top
\calE/\s[-1] p$ is uniformly integrable, $\tr (\Sv)/\s[-1]p$
converges to $\sv^2$.
\end{remark}

The limits in parts \ref{pca:HBBH}--\ref{pca:HBBz} 
of Theorem \ref{thm:pca} are new and
noteworthy.  They supply estimators for the quantities
$\scrH^\top \scrB \scrB^\top \scrH$ and $\scrH^\top z_B =
(\scrH^\top \scrB) \scrB^\top z$ from data. While these
are not enough to estimate $\scrE_p(\scrH)$ (for that
we need both $\scrB^\top \scrH$ and $\scrB^\top z$), they
suffice for the task of estimating the norm $|\scrE_p(\scrH)|$
from the data $\Y$.

The convergence in part \ref{pca:HBBH} has an interpretation. By
direct calculation we have that $\scrH^\top \scrB\scrB^\top\scrH =
(BB^\dagger \scrH )^\top (BB^\dagger \scrH)$ (e.g., see
$\req{HBBH}$ in Appendix \ref{app:pca}), which implies that for
columns $j, j'$ of $\scrH$, say $h$ and $h'$, the $\jj'$th
entry of $\scrH^\top \scrB \scrB^\top \scrH$ is
$\ip{h_B}{h'_B}$, i.e., the inner product of $h$ and $h'$
projected onto $\col{B}$. This is in contrast to the $\jj'$th
entry $\ip{h}{b'}$ of $\scrH^\top \scrB$ where $b'$ is the $j'$th
column of $\scrB$. Part \ref{pca:HBBH} states
that, 
\begin{equation} \label{hBh_B}
\begin{aligned} 
\s[32] \limp \s \ip{h_B}{h'_B} &= 0  \s[32] (h \neq h') \\ 
 \limp  |h_B| \s \Psi^{-1}_{\jj} &= 1  \s[32] (j = j')
\end{aligned}
\end{equation}
almost surely, where $\Psi_{\jj}$ is itself a random sequence
eventually in $(0,1)$.  That is, sample eigenvectors remain
orthogonal in $\col{B}$, but their norms are less than the
maximal unit length, i.e., columns of $\scrH$ are inconsistent
estimators of columns of $\scrB$.

The following elegant characterization is an artifact of the
fact that square matrices with orthonormal rows must also have
orthonormal columns. 

\begin{corollary}  \label{cor:inv}
Let $\calH = \scrH \Psi^{-1}$. Under the hypotheses of 
Theorem \ref{thm:pca} the $q \times q$ matrices $\calH^\top
\scrB$ and $\scrB^\top \calH$ are asymptotic inverses
of one another., i.e., almost surely,
\begin{align} \label{unitary}  
\limp |\s \calH^\top  \scrB \scrB^\top \calH  - \Id \s | = 
\limp |\s \scrB^\top \calH \calH^\top  \scrB - \Id \s | = 0  \s .
\end{align}
\end{corollary}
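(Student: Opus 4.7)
The plan is to reduce both limits in \eqref{unitary} to a single convergence statement by exploiting that $\calH^\top \scrB$ is a $q \times q$ square matrix, so $AA^\top$ and $A^\top A$ share a common spectrum.

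Set $A := \calH^\top \scrB = \Psi^{-1} (\scrH^\top \scrB)$, which is $q \times q$. Then $AA^\top = \calH^\top \scrB \scrB^\top \calH$ and $A^\top A = \scrB^\top \calH \calH^\top \scrB$, so I would first establish $\limp |AA^\top - \Id| = 0$ almost surely. Factoring out $\Psi^{-1}$ gives
\[
AA^\top - \Id = \Psi^{-1}\bigl(\scrH^\top \scrB \scrB^\top \scrH - \Psi^2\bigr)\Psi^{-1},
\]
so submultiplicativity of the operator norm yields
\[
|AA^\top - \Id| \le |\Psi^{-1}|^2 \,\bigl|\scrH^\top \scrB \scrB^\top \scrH - \Psi^2\bigr|.
\]
The second factor vanishes almost surely by Theorem~\ref{thm:pca}\ref{pca:HBBH} (recall $\snr = \Psi$), so it remains to show $|\Psi^{-1}|$ is eventually bounded. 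For this I would combine parts \ref{pca:Sp} and \ref{pca:kappa}: part \ref{pca:Sp} yields $\qeig_p^{-2} = K_p^{-2}\Psi^{2}(1+o(1))$, and substituting into $\Psi^2 = \Id - \kappa_p^2 \qeig_p^{-2}$ gives $\Psi^2(\Id + \kappa_p^2 K_p^{-2}(1+o(1))) = \Id$; since $\kappa_p^2/p \to \sv^2/n$ by \ref{pca:kappa} while $K_p^2/p$ converges to a positive definite diagonal matrix by Assumption~\ref{asm:afm}\ref{afm:BB} (with $V_n$ independent of $p$), the ratio $\kappa_p^2 K_p^{-2}$ has an almost sure bounded limit, and each diagonal entry of $\Psi^2$ is therefore eventually bounded below by a strictly positive constant.

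The second limit in \eqref{unitary} is then purely algebraic: for any $q \times q$ matrix $A$ with SVD $A = U\Sigma V^\top$, the symmetric matrices $AA^\top - \Id = U(\Sigma^2-\Id)U^\top$ and $A^\top A - \Id = V(\Sigma^2-\Id)V^\top$ share the common spectrum $\{\sigma_j^2-1\}_{j=1}^q$, whence
\[
|AA^\top - \Id| = \max_j |\sigma_j^2-1| = |A^\top A - \Id|.
\]
Applying this identity to $A = \calH^\top \scrB$ transfers $AA^\top \to \Id$ to $A^\top A \to \Id$, completing the corollary. The main obstacle is the uniform lower bound on the diagonal of $\Psi^2$ (equivalently, the upper bound on $|\Psi^{-1}|$); once that is in hand, the remainder of the argument is bookkeeping together with the square-matrix SVD identity above.
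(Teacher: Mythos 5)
Your proof is correct and follows essentially the same route the paper indicates in the remark preceding the corollary (``square matrices with orthonormal rows must also have orthonormal columns''): you conjugate the Theorem~\ref{thm:pca}\ref{pca:HBBH} limit by $\Psi^{-1}$ to get $AA^\top \to \Id$ for $A = \calH^\top\scrB$, then transfer this to $A^\top A$ via the SVD identity $|AA^\top - \Id| = |A^\top A - \Id|$ for square $A$. The eventual boundedness of $|\Psi^{-1}|$ that you rederive from parts \ref{pca:Sp}--\ref{pca:kappa} is also already contained in part \ref{pca:HBBH}'s conclusion that each $\Psi^2_{\jj}$ is eventually in $(0,1)$, so your extra step is safe but not strictly necessary.
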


Applying this to 
$\limp |\calH^\top z - \calH^\top \scrB \scrB^\top z| = 0$ 
per Theorem \ref{thm:pca}\ref{pca:HBBz}, yields
\begin{align} \label{Hzlen}  
\limp \frac{|z_B|}{|\calH^\top z|}  = 1
\end{align}
provisionally on Assumption \ref{asm:eB} and without it, both
$|z_B| = |\scrB^\top z|$ and $|\calH^\top z|$ converge to zero.
Thus, Theorem \ref{thm:pca} implies we can asymptotically
know the length $|z_B|$ (the norm of the projection of $z$
in $\col{B}$). Further, as all diagonal entries of $\Psi$ are 
eventually smaller
than one and $|\calH^\top z| \le |\Psi^{-1}||\scrH^\top z|
= |\Psi|^{-1} |z_H|$, we deduce that $\col{B}$ has larger
projection onto $z$ than does $\col{H}$ eventually in $p$. We
conclude with a simple consequence
of Theorem \ref{thm:pca}\ref{pca:HBBz} relevant
for Theorem \ref{thm:pcabias}.

\begin{corollary} \label{cor:Bzero} Suppose that Assumption 
\ref{asm:asymp} holds. Then, $\limp \scrB^\top z
= 0_q$ implies $\limp \scrH^\top z = 0_q$ almost surely.
\end{corollary}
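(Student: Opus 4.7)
The plan is to deduce this directly from Theorem~\ref{thm:pca}\ref{pca:HBBz}, which asserts that almost surely
\[
\limp \bigl| \scrH^\top z - (\scrH^\top \scrB)\s \scrB^\top z \bigr| = 0.
\]
By the triangle inequality, on the full-measure event on which this limit holds, it suffices to show that $(\scrH^\top \scrB)\s \scrB^\top z \to 0_q$ whenever $\scrB^\top z \to 0_q$. I would accomplish this by factoring through the spectral norm: write $|(\scrH^\top \scrB)\s \scrB^\top z| \le |\scrH^\top \scrB| \cdot |\scrB^\top z|$, and then argue that the operator norm $|\scrH^\top \scrB|$ is uniformly bounded in $p$.

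The boundedness of $|\scrH^\top \scrB|$ is the only thing requiring a brief verification, and I expect this to be immediate rather than an obstacle. Indeed, $\scrH$ has orthonormal columns by construction in $\req{pcs}$, and $\scrB = \nu_{p \times q}(B)$ has orthonormal columns by the definition of $\nu_{p \times q}(\cdot)$ in $\req{select}$. Hence both $\scrH^\top \scrH = \Id_q$ and $\scrB^\top \scrB = \Id_q$, which gives $|\scrH^\top \scrB| \le |\scrH|\cdot |\scrB| = 1$ for every $p$.

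Combining these two ingredients: under the hypothesis $\limp \scrB^\top z = 0_q$, the product $(\scrH^\top \scrB)\s \scrB^\top z$ vanishes in the limit, and therefore so does $\scrH^\top z$ by the triangle inequality applied to the Theorem~\ref{thm:pca}\ref{pca:HBBz} limit. This yields $\limp \scrH^\top z = 0_q$ almost surely. Since both the application of the theorem and the norm bound are essentially one-line observations, the whole argument should be a short display of inequalities rather than a layered proof; the harder content has already been absorbed into Theorem~\ref{thm:pca}.
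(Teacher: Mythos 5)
Your argument is correct and matches the paper's intent exactly: the corollary is stated as ``a simple consequence of Theorem~\ref{thm:pca}\ref{pca:HBBz},'' and the one-line chain $|\scrH^\top z| \le |\scrH^\top z - (\scrH^\top\scrB)\scrB^\top z| + |\scrH^\top\scrB|\cdot|\scrB^\top z|$ together with $|\scrH^\top\scrB|\le 1$ is precisely that consequence. The boundedness claim is valid since both $\scrH$ and $\scrB$ have orthonormal columns, so there is no gap.
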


\section{An Impossibility Theorem}
\label{sec:impossible}

The problem of estimating the unknown $\scrB^\top \scrH$ and
$\scrB^\top z$ appearing in $\req{optbias_gps}$ encounters
significant challenges for $q > 1$. It is related to, but
separate from, the problem called \tq{\it unidentifiability}
that arises in the context of factor analysis (e.g.,
\ci{shapiro1985}).  Here, we prove an \tq{\it impossibility}
result.  To give an interpretation of $\scrB^\top \scrH$, we now
require $\Exp(\Sam) = \bSig$ and $\scrB = \nu_{p \times q}(B)$,
so that $\scrH$ and $\scrB$ may be regarded as the sample and
the population eigenvectors (or asymptotic principal
components).

With $\scrB \Lambda_p \scrW^\top$ denoting the  singular value
decomposition of $B \in \bbR^{p \times q}$ in $\req{data}$, and 
similarly $ (1/ \s[-2] \sqrt{n})\s \Y \scrU  = \scrH \qeig_p $
with $\scrU \in \bbR^{n \times q}$, we find (see Appendix
\ref{app:pca}),
\begin{equation} \label{BHlim_text}
  \limp  \scrB^\top \scrH 
= \limp \Lambda_p \scrW^\top \X^\top \scrU \scrS_p^{-1}  /\sqrt{n}
\end{equation} 
which holds almost surely under Assumption~\ref{asm:asymp}. This
limit relation has been studied in the {\hd} literature under
various conditions and modes of convergence (e.g., \ci{jung2012}
and \ci{shen2016}). But these authors do not derive estimators
for the right side of $\req{BHlim_text}$ (i.e., the $\Lambda_p
\scrW^\top \X^\top$ is not observed).

We prove that it is not possible, without very strong assumption
on $\X$, to develop asymptotic estimators of the inner product
matrix $\scrB^\top \scrH$. Given this, it is also reasonable to
conjecture the same for $\scrE_p(\scrH) \in \bbR^q$.  While this
problem is motivated by our study of the quadratic optimization
bias, the estimation of the entries of $\scrB^\top \scrH$, and
hence the estimation of angles between the sample and population
eigenvectors, is an interesting (and to our knowledge,
uninvestigated) problem in its own right.  

We remark that the problem of \tq{unidentifiability} amounts to
the observation that replacing $B$ and $\X$ by  $B\scrO $ and
$\X \scrO$ for any orthogonal matrix $\scrO$ does not alter the
observed data matrix $Y = B\X^{\top}+ \E$ deeming $B$
unidentifiable (i.e., $B$ or $B\scrO$?).  However, the quantity
of interest in our work is $\scrB^{\top}\scrH$, which bypasses
this type of unidentifiability as $\scrB$  is defined via the
identity $\nu_{p\times q}(B)=\nu_{p\times q}(B\scrO)$ which is a
population quantity encoding the uniquely selected $q$
eigenvectors of $\Exp(\Sam) = \bSig$. Hence, the
unidentifability of $B$ is related to but not the same as
problem we formulate.

We work in a setting where the noise $\E$ in $\req{data}$ is
null and the matrices $B = B_{p \times q}$ have additional
regularity over Assumption \ref{asm:afm}.  The presumption here
is that these simplifications can make our stated estimation
problem for $\scrB^\top \scrH$ only easier.

\begin{condition} \label{cond:simple}
The data matrices $\Y = \Y_{p \times n}$ with $n > q$ fixed 
have $\Y = B \X$ for a sequence $B = B_{p \times q}$ and 
$\X \in \bbR^{n \times q}$ satisfying the following.
\begin{enumerate}[label=\text{(\alph*)}, itemsep=0.0in]
\item  \label{simple:X} The $\X$ is a random variable on
a probability space $(\Omega, \calF, \Prb)$ with $V^2_n = 
\X^\top \X/n$  almost surely invertible and such that 
$\Exp(V^2_n) = \Id_q$.
\item \label{simple:B} The $B = B_{p \times q}$ (for all $p$) satisfies
$B /\s[-2] \sqrt{p} = \scrB \Lambda \scrW^\top$ 
for $\scrB = \nu_{p \times q} (B)$,
 a fixed $q \times q$ orthogonal $\scrW$ and fixed $q \times q$
diagonal $\Lambda$ with $\Lambda_{ii} \neq \Lambda_{jj}$ for 
all $i \neq j$.
\end{enumerate}
\end{condition}

Any $(B, \X)$ of Condition \ref{cond:simple} has $B =
B_{p \times q}$ obeying Assumption \ref{asm:afm}\ref{afm:BB}
with $\limp B^\top B/p = \scrW \Lambda^2 \scrW^\top$, and $\X$ for which
the sample covariance $\Sam = \Y \Y^\top\s[-2] /n$ satisfies
$\Exp(\Sam) = \bSig = BB^\top = \scrB \Lambda_p^2 \scrB^\top$
for the eigenvalue matrix $\Lambda_p^2 = p \Lambda^2$.

For $B = B_{p \times q}$ satisfying Condition
\ref{cond:simple}\ref{simple:B}, we define a set of orthogonal
transformations which non-trivially change the eigenvectors 
$\scrB = \nu_{p \times q}(BB^\top)$. Let,
\begin{align} \label{Ond}
 \Ond =  \big\{ \scrO \in \bbR^{q \times q}
\cst    \nu_{p \times q}(\scrB_o \Lambda \scrW^\top) = \scrB_o
= \scrB \scrO \text{ for all $p$} \s  \big\} \s . 
\end{align}

Every element $\scrO \in \Ond$ induces the data $\Y = B_o
\X^\top$ with $B_o = \scrB_o \Lambda \scrW^\top \sqrt{p}$ and
$\scrB_o = \scrB \scrO$, which is uniquely identified by the
orthogonal matrix $\scrO$. The new data set built in this way
satisfies Condition \ref{cond:simple} for $(B, \X)$
of that condition. We remark that the only diagonal
element of $\Ond$ is the identity matrix $\Id_q$ (i.e., flipping
the signs of any of the columns of $\scrB$ does not result in a
different set of eigenvectors $\nu_{p \times q}(B)$).  Indeed,
if we partition all $q \times q$ orthogonal matrices by the
equivalence relation that sets two matrices equivalent when
their columns differ only by a sign, then the set $\Ond$ selects
exactly one element from each equivalence class. Since the
number of elements in each equivalence class is finite, we have
established that the set $\Ond$ has the same cardinality as the
set of all orthogonal matrices with dimensions $q\times q$.

We now consider $\bbG \subseteq \Ond$  and a sequence of
(nonrandom) measurable functions $\f_p \cst \bbR^{p \times q}
\to \bbR^{q \times q}$ that together with the notation $\Y_{B} =
B \X^\top$ define,
\begin{align} \label{AfB} 
A_{\f,\bbG}(\scrB) =
\{ \omega \in \Omega \cst \limp | \scrH^\top \scrB_o -
f_p(\Y_{B_o})|(\omega) = 0, 
\s[2] \scrO \in \bbG \cup \{\Id_q \} \}\s  .
\end{align}

The event $A_{f,\bbG}(\scrB)$ consists of all outcomes
for which  the $f_p(\Y_{B_o})$
consistently estimate $\scrH^\top \scrB_o $ as $p
\upto \infty$ for every $\scrO \in \bbG\cup \{I_q\}$. 
The following lemma may be used to generate bounds on the
probability of event $A_{\f,\bbG}(\scrB)$ for many examples. 

\begin{lemma} \label{lem:gf}
Suppose Condition \ref{cond:simple}. Then, for any $\f 
\cst \bbR^{p \times n} \to \bbR^{p \times q}$
and corresponding function $f^\nu$ given by
$\f^\nu(\Y/\s[-1] \sqrt{p}) 
= \nu_{p \times q} (\Y)\f(\Y) = \scrH\f(\Y)$, we have
\begin{align}
| \scrB - \s \f^\nu(\Y/\s[-1] \sqrt{p})| \le
| \scrH^\top \scrB - \f(\Y)|  \s .
\end{align}
\end{lemma}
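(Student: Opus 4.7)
The plan is to exploit the noise-free structure of Condition~\ref{cond:simple}, under which the sample and population leading eigenspaces coincide exactly, almost surely. Once this is established, the bound reduces to a one-line identity obtained by splitting $\scrB - \scrH f(\Y)$ into its component in $\col{\scrH}$ and its component orthogonal to $\col{\scrH}$.

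The first step is to verify that $\col{\scrH} = \col{B}$ almost surely. Since $\Y = B\X^\top$ is noise-free under Condition~\ref{cond:simple}, the sample covariance factors as $\Sam = \tfrac{1}{n}\Y\Jc\Y^\top = B V_n B^\top$ with $V_n = \X^\top \Jc \X/n$ almost surely invertible by Condition~\ref{cond:simple}\ref{simple:X}. Because $B = B_{p\times q}$ has full column rank $q$ (guaranteed by the invertibility of $\Lambda$ in Condition~\ref{cond:simple}\ref{simple:B}), the matrix $\Sam$ has rank exactly $q$ almost surely and its column span coincides with $\col{B}$. Consequently $\scrH = \nu_{p\times q}(\Sam)$ satisfies $\col{\scrH} = \col{B} = \col{\scrB}$ almost surely; this is the central identity driving the proof.

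With this in hand, $\scrH\scrH^\top$ is the orthogonal projector onto $\col{\scrB}$, so $\scrH\scrH^\top \scrB = \scrB$. I would then write
\begin{equation*}
\scrB - \scrH f(\Y)
= (\Id - \scrH\scrH^\top)\scrB + \scrH\bigl(\scrH^\top \scrB - f(\Y)\bigr)
= \scrH\bigl(\scrH^\top \scrB - f(\Y)\bigr) .
\end{equation*}
Taking the spectral norm of both sides and using $\scrH^\top \scrH = \Id_q$ (so that $|\scrH M v|^2 = v^\top M^\top \scrH^\top \scrH M v = |Mv|^2$ for every $v$, and hence $|\scrH M| = |M|$ for any matrix $M$), one obtains $|\scrB - \scrH f(\Y)| = |\scrH^\top \scrB - f(\Y)|$. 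The asserted inequality then follows (in fact as an equality).

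The only step of substance is the identification $\col{\scrH} = \col{B}$ almost surely; the remainder is a routine orthogonal decomposition together with the elementary identity $|\scrH M| = |M|$ for matrices $\scrH$ with orthonormal columns. Because the lemma is deterministic off the almost-sure event on which $V_n$ is invertible, no delicate limit theorem or high-dimensional estimate is required, which is what makes the noise-free reduction so useful for the impossibility result to follow.
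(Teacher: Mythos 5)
Your proof is correct and essentially matches the paper's: both hinge on the noise-free identity $\scrB = \scrH\scrH^\top\scrB$ (which you phrase as $\col{\scrH}=\col{\scrB}$ almost surely, via invertibility of $V_n^2 = \X^\top\X/n$ plus full column rank of $B$), followed by factoring $\scrB - \scrH f(\Y) = \scrH(\scrH^\top\scrB - f(\Y))$. The only distinction is that the paper bounds $|\scrH(\scrH^\top\scrB - f(\Y))| \le |\scrH|\,|\scrH^\top\scrB - f(\Y)|$ using $|\scrH|=1$, whereas you observe the sharper fact that $\scrH$ with orthonormal columns is an isometry on the range, so the inequality in the lemma actually holds with equality.
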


\begin{proof}
Since almost surely, $\X$ has linearly independent columns,
it is easy to see that $\scrB = \scrH\scrH^\top \scrB$.
Using that $|\scrH| =1$ and that $\f^\nu(\Y) = \scrH \f(\Y)$
 yields,
 \begin{align*} 
   |\scrB - \f^\nu(\Y/\s[-1]\sqrt{p})| 
&= |\scrB - \scrH \f(\Y)|  
= |\scrH\scrH^{\top}\scrB - \scrH \f(\Y)| 
\\&\leq |\scrH||\scrH^{\top}\scrB - \f(\Y)| 
= |\scrH^{\top}\scrB - \f(\Y)| 
\end{align*}
\end{proof}

\vspace{-0.16in}
The next example is a good warm-up for our main 
result (Theorem \ref{thm:angles})
below.

\begin{example} 
Let $m \in \bbR^{n \times q}$ be nonrandom and  $\Xv =
\X \scrW \Lambda$, a random matrix with $\varphi_m = \Prb( \Xv = m)$
and $\Prb( \Xv = m \scrO) = 1 - \varphi_m$ for some $\scrO \in
\Ond \setminus \{I_q\}$.  By taking $\bbG = \{I_q, \scrO\}$, the
event $A_{\f,\bbG}(\scrB)$ contains the outcomes for which
$\scrH^\top \scrB_o$ admits a consistent estimator for two
data sets corresponding to the $\scrB$ and $\scrB_o = \scrB \scrO$. 

If $\Prb(A_{\f,\bbG}(\scrB)) > \varphi_m$, then
$A_{\f,\bbG}(\scrB)$  contains outcomes corresponding to
each possible realization of $\Xv$ which implies by Lemma
\ref{lem:gf} that both $|\scrB - \f^\nu_p (\scrB m^\top)|$ and
$|\scrB_o - \f^\nu_p(\scrB m^\top)|$ converge to zero.
Since this is a contradiction, 
$\Prb(A_{\f,\bbG}(\scrB)) \le \varphi_m$. 
\end{example}

This stylized example may be substantially generalized by requiring
a certain distributional property of the random 
variable $\Xv = \X \scrW \Lambda$.

\begin{definition}  \label{Gdist}
We say a random variable $\Xv \in \bbR^{n \times q}$
is  \emph{$\bbG$-distributable} if there exists 
a collection $\bbG \subseteq \Ond \setminus \{\Id_q\}$  
such that
for any measurable $G \subseteq \bbR^{n \times q}$,
\[  \s[32] \Prb( \Xv \in G) \s \le \s \Prb(\Xv \in \cup_{\scrO
\in \bbG}  G\scrO ) \s[16] 
\big( G \scrO = \{ m \scrO \cst m \in G\} \big) \s . 
\]
\end{definition}

Clearly, $\Xv$ that has mean zero i.i.d. Gaussian entries is
$\bbG$-distributable for $\bbG$ with just one element, but we expect many 
random matrices $\Xv$ to have this property.
Our main result shows that even when restricting to a smaller
set of covariance models, the chances of estimating the 
matrix $\scrH^\top \scrB$ are no better
than a coin flip.

\begin{theorem} \label{thm:angles}
Suppose  Condition \ref{cond:simple} holds and $\Xv = \X \scrW
\Lambda$ is 
$\bbG$-distributable with $\bbG \subseteq \Ond \setminus
\{\Id_q\}$. 
Then, for this $\bbG$ and
any sequence of (nonrandom) measurable  functions $\f_p 
\cst \bbR^{p \times n} \to \bbR^{p \times q}$,
the  $A_{\f,\bbG}(\scrB)$ in $\req{AfB}$ 
has $\Prb(A_{\f,\bbG}(\scrB)) \le 1/2$.
\end{theorem}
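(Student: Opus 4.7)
The plan is a contradiction-plus-union-bound argument. I will show that the $\Xv$-image of $A_{\f,\bbG}(\scrB)$ must be disjoint from each of its right-translates $G_*\scrO$ for $\scrO\in\bbG$, and then combine this disjointness with the defining inequality of $\bbG$-distributability to force $\Prb(A_{\f,\bbG}(\scrB))\le 1/2$.

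First I would recast the data so that different $\scrO\in\Ond$ act on the right of $\Xv=\calX\scrW\Lambda$. Using Condition \ref{cond:simple}\ref{simple:B}, for every $\scrO\in\bbG\cup\{\Id_q\}$ one has $\Y_{B_o}/\s[-1]\sqrt{p}=\scrB\scrO\s\Xv^{\top}=\scrB(\Xv\scrO^{\top})^{\top}$. Since $\f_p$ is measurable and $\scrW\Lambda$ is invertible, $A_{\f,\bbG}(\scrB)$ belongs to $\sigma(\Xv)$, so $A_{\f,\bbG}(\scrB)=\Xv^{-1}(G_*)$ for some Borel $G_*\subseteq\bbR^{n\times q}$. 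Define $h_p(m)=\f^{\nu}_p(\scrB m^{\top})$ with $\f^{\nu}_p$ as in Lemma \ref{lem:gf}, so that $h_p(\Xv(\omega)\scrO^{\top})=\f^{\nu}_p(\Y_{B_o}(\omega)/\s[-1]\sqrt{p})$. Applying Lemma \ref{lem:gf} with $\scrB$ replaced by $\scrB_o=\scrB\scrO$ converts the defining condition of $A_{\f,\bbG}(\scrB)$ into
\[
\lim_{p\upto\infty}\bigl|\s\scrB\scrO-h_p(\Xv(\omega)\scrO^{\top})\s\bigr|=0
\qquad(\omega\in A_{\f,\bbG}(\scrB),\ \scrO\in\bbG\cup\{\Id_q\}).
\]

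The core contradiction compares two outcomes and pairs the two accessible $\scrO$-instances. Suppose $\omega_1,\omega_2\in A_{\f,\bbG}(\scrB)$ satisfy $\Xv(\omega_2)=\Xv(\omega_1)\scrO$ for some $\scrO\in\bbG$. Taking $\Id_q$ at $\omega_1$ gives $h_p(\Xv(\omega_1))\to\scrB$, while taking $\scrO$ at $\omega_2$ gives $h_p(\Xv(\omega_2)\scrO^{\top})=h_p(\Xv(\omega_1))\to\scrB\scrO$. Because $\scrB^{\top}\scrB=\Id_q$ implies $|\scrB-\scrB\scrO|=|\Id_q-\scrO|$, which is a strictly positive constant independent of $p$ whenever $\scrO\neq\Id_q$, these two limits are incompatible. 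Hence $G_*\scrO\cap G_*=\emptyset$ for every $\scrO\in\bbG$, so $\bigcup_{\scrO\in\bbG}G_*\scrO\subseteq\bbR^{n\times q}\setminus G_*$.

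To conclude, I chain this disjointness with $\bbG$-distributability: the former yields $\Prb(\Xv\in\bigcup_{\scrO\in\bbG}G_*\scrO)\le 1-\Prb(\Xv\in G_*)$, while the latter gives $\Prb(\Xv\in G_*)\le\Prb(\Xv\in\bigcup_{\scrO\in\bbG}G_*\scrO)$; combining them yields $2\s\Prb(\Xv\in G_*)\le 1$, i.e., $\Prb(A_{\f,\bbG}(\scrB))\le 1/2$. The main obstacle is the careful bookkeeping of transpose conventions: the $\scrO$ appearing on the right in $G_*\scrO$ (from the definition of $\bbG$-distributability) must match the orientation emerging from Lemma \ref{lem:gf} and the definition of $A_{\f,\bbG}(\scrB)$, and it is precisely the pairing of $\Id_q$ at $\omega_1$ with $\scrO$ at $\omega_2$ (rather than the reverse) that delivers the disjointness in the correct form.
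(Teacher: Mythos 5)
Your strategy is the same as the paper's: invoke Lemma \ref{lem:gf} to transfer the problem to the reduced estimator $\f^\nu_p$, establish that the image set is disjoint from its right $\bbG$-translates by a direct contradiction, and then close with the $\bbG$-distributability inequality. The transpose bookkeeping, the rewriting $\Y_{B_o}/\sqrt{p}=\scrB\scrO\Xv^\top$, and the observation $|\scrB - \scrB\scrO| = |\Id_q - \scrO| > 0$ are all correct and match the paper.

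However, there is a genuine gap in the step ``Hence $G_*\scrO\cap G_* = \emptyset$.'' You define $G_*$ as \emph{some} Borel set with $A_{\f,\bbG}(\scrB)=\Xv^{-1}(G_*)$, but your contradiction only rules out pairs $\omega_1,\omega_2\in A_{\f,\bbG}(\scrB)$ with $\Xv(\omega_2)=\Xv(\omega_1)\scrO$. This establishes $\Xv(A_{\f,\bbG}(\scrB))\scrO\cap\Xv(A_{\f,\bbG}(\scrB))=\emptyset$, not the disjointness you need. Concretely, if $\omega$ satisfies $\Xv(\omega)\in G_*$ and $\Xv(\omega)\in G_*\scrO$, you know $\omega\in A_{\f,\bbG}(\scrB)$ and $\Xv(\omega)\scrO^\top\in G_*$, but you do \emph{not} know that $\Xv(\omega)\scrO^\top$ lies in the range of $\Xv$, so no $\omega_1$ exists to pair with $\omega_2 = \omega$. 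Since you only know $A_{\f,\bbG}(\scrB)=\Xv^{-1}(G_*)$ for an arbitrary Borel $G_*$, points of $G_*$ outside $\Xv(\Omega)$ carry no information about the limit conditions, and the contradiction is unavailable. The standard fix is to take $G_*$ to be the Borel set of \emph{all} $m\in\bbR^{n\times q}$ satisfying $\limp|\scrB\scrO - h_p(m\scrO^\top)| = 0$ for every $\scrO\in\bbG\cup\{\Id_q\}$; this set is measurable (a countable intersection of preimages of closed sets under measurable maps), it satisfies $A_{\f,\bbG}(\scrB)=\Xv^{-1}(G_*)$, and, crucially, every $m\in G_*$ satisfies the limits by construction so the disjointness argument can be run at the level of $m_1,m_2\in G_*$ rather than outcomes $\omega_1,\omega_2$. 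This is precisely how the paper argues: it works with $m\in G$ where every element carries the limit relations, and the contradiction is derived by substituting $m_1 = m_2\scrO$ directly. Once $G_*$ is chosen correctly, the rest of your argument (the union bound and the $\bbG$-distributability step) is sound and identical to the paper's.
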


\begin{proof}  By Lemma \ref{lem:gf}, we have 
$A_{\f,\bbG}(\scrB) \subseteq A^\nu_{\f,\bbG}(\scrB)$ where
\[ A^\nu_{\f, \bbG}(\scrB) =  \{
\omega \in \Omega \cst \limp |  \scrB_o -
 f^\nu_p(\scrB_o \Xv^\top)|(\omega) = 0, \s[2] 
\s \forall \scrO \in \bbG\cup \{I_q\}\} \]
for $\scrB_o \Xv^\top \s[-2] \sqrt{p} = \Y_{B_o}  = \Y$, the data 
matrix per $\req{AfB}$, $\scrB_o = \scrB \scrO$
and $\Xv = \X \scrW \Lambda$, after recalling the definition
$\f^\nu(\Y/\s[-1] \sqrt{p}) 
= \nu_{p \times q} (\Y)\f(\Y) = \scrH \f(\Y)$.

Note that the $\calF$-measurability  of 
the set $A^\nu_{\f,\bbG}(\scrB)$ is
granted by the measurability of each $\f^\nu_p$ (i.e., each
$\f_p$ is measurable and so is each $\nu_{p \times q}$
\citep{acker1974}).

Letting $G = \Xv(A^\nu_{\f,\bbG}(\scrB)) = \{ \Xv(\omega) \in 
\bbR^{n \times q} \cst \omega \in A_{\f,\bbG}^\nu(\scrB)\}$, we see that
\begin{align} \label{fvB}
\s[32] \limp | \scrB  - \f^\nu_p(\scrB m^\top )| &= 0
\s[32] \forall m \in G  ,  
\end{align}
by taking $\scrO = \Id_q$. Analogously, for $\scrB_o
= \scrB \scrO$ for any $\scrO \in \bbG$, we have
\begin{align} \label{fvBO}
\s[10] \limp | \scrB\scrO  - \f^\nu_p(\scrB \scrO m^\top )| &= 0
\s[32] \forall m \in G   .
\end{align}

Letting $G' = \underset{\scrO \in \bbG}{\cup} G \scrO$, we claim
that $G$ and $G'$ are disjoint. To see this, note that if 
$m_1\in G\cap G'$,
then $m_1 =  m_2\scrO$ for $m_2\in G$, $\scrO \in \bbG$.
Substituting $m_1 = m_2\scrO \in G$ for $m$ in relation
$\req{fvBO}$, and substituting $m_2 \in G$ for  $m$ in relation
$\req{fvB}$, yields
\begin{align*}
\limp | \scrB\scrO  - \f^\nu_p(\scrB  m_2^\top )| = 0
\s[4] \text{ and } \s[4]
\limp | \scrB - \f^\nu_p(\scrB  m_2^\top )| = 0 \s ,
\end{align*}
a contradiction, as both cannot hold simultaneously.
Thus, $G$ and $G'$ are disjoint.

Consequently $\{\Xv\in G\}$ and
$\{\Xv\in G'\}$ are disjoint and  moreover, the $\bbG$-distributability
of $\Xv$ implies $\Prb(\Xv \in G) \le \Prb(\Xv \in G')$.
This along with the fact that $A_{\f,\bbG}(\scrB)\subseteq
A_{\f,\bbG}^{\nu}(\scrB)\subseteq \{\Xv\in G\}$ implies 
the desired result, i.e., 
\begin{align*}
    1 &\ge \Prb(M\in G') + \Prb(M\in G) 
    \ge 2 \s \Prb(M\in G) 
    \ge 2\s \Prb(A^\nu_{\f,\bbG}(\scrB)) 
    \\ &\ge 2\s \Prb(A_{\f,\bbG}(\scrB)) \s .
\end{align*}
\end{proof}

\section{Optimization Bias Free Covariance Estimator}
\label{sec:Hsharp}

Let $\scrH$ be the $p \times q$ matrix of eigenvectors in 
$\req{efficient}$ of the sample covariance $\Sam$.
Recalling the variables $\Pd = (\snr^{-1} - \snr)$
and $z \in \bbR^p$ of
Theorem \ref{thm:pcabias}, we define
\begin{align} \label{zpH} 
 z_{\perp \scrH} 
= \frac{z - z_\scrH}{|z-z_\scrH|}  \in \bbR^p \s, 
\s[16]
\hz= \frac{\Pd \scrH^\top z} 
{|z - z_\scrH|}  \in \bbR^q \s ,
\end{align}
Theorem \ref{thm:pcabias} proved that 
$\limp (|\scrE_p(\scrH)| - |\varphi|) =0$ with $|\hz|$ eventually
in $[0,\infty)$ almost surely.
From the observable $\hz$ and $\zpH$, we 
now construct an $p \times q$ matrix $\scrH_\sharp$ with
$\scrE_p(\scrH_\sharp) \to 0$ as $p \upto \infty$.
To this end, consider the eigenvalue decomposition 
\begin{align} \label{Phi}
 \snr^2 + \hz \hz^\top
= \Rv \Phi^2 \Rv^\top\s ,
\end{align}
for eigenvectors $\Rv = \nu_{q \times q}(\snr^2 + \hz\hz^\top)$
and diagonal $q \times q$ matrix of eigenvalues $\Phi^2$.
The estimator $\scrH_\sharp$ is computed 
as the eigenvectors $\scrH_\sharp =
\nu_{p \times q}(\scrH \Psi + \zpH \hz^\top)$, i.e.,
\begin{align} \label{Hsharp}
 \scrH_\sharp = (\scrH \Psi + \zpH \s \hz^\top)  \Rv
\Phi^{-1} ,
\end{align}
where the diagonal $\Phi$ is invertible for $p$ sufficiently large
under our assumptions.

\begin{theorem}\label{thm:Hsharp}
Suppose Assumptions \ref{asm:eB} \& \ref{asm:asymp} hold. Then, 
almost surely, 
\begin{align}  \label{limEHsharp}
\limp \scrE_p(\scrH_\sharp) = 0_q \s . 
\end{align}
Moreover, $\scrH_\sharp^\top \scrH_\sharp = \Id$
and $\limp | \scrH_\sharp^\top \scrB\scrB^\top \scrH_\sharp
-  \Phi^2|  = 0$ almost surely.
\end{theorem}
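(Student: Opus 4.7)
Orthonormality $\scrH_\sharp^\top \scrH_\sharp = \Id_q$ is immediate by direct expansion: writing $A = \scrH \Psi + z_{\perp \scrH} \hz^\top$ and using $\scrH^\top z_{\perp \scrH} = 0$ together with $|z_{\perp \scrH}| = 1$, we find $A^\top A = \Psi^2 + \hz \hz^\top = \Rv \Phi^2 \Rv^\top$, so $\scrH_\sharp^\top \scrH_\sharp = \Phi^{-1} \Rv^\top (A^\top A) \Rv \Phi^{-1} = \Id_q$. In particular $A^\top A$ is positive definite eventually (since $\Psi^2_{jj} \in (0,1)$ eventually by Theorem~\ref{thm:pca}\ref{pca:HBBH}), so $\scrH_\sharp = A \cdot K$ with $K = \Rv \Phi^{-1}$ an invertible $q \times q$ matrix, and Lemma~\ref{lem:K} yields $\scrE_p(\scrH_\sharp) = \scrE_p(A)$.

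To compute $\scrE_p(A) = \scrB^\top (z - z_A)/|z - z_A|$, the key geometric observation is that $\col{A}$ is a $q$-dimensional subspace of the $(q+1)$-dimensional space $\mathrm{span}(\col{\scrH}, z_{\perp \scrH})$, which contains $z$ itself. A direct calculation in this space shows that the unit vector orthogonal to $\col{A}$ within it is
\[ v = \frac{z_{\perp \scrH} - \scrH \Psi^{-1} \hz}{\sqrt{1 + |\Psi^{-1} \hz|^2}}, \]
and that $v^\top z = (1 - g^\top \Psi^{-2} g)/\sqrt{(1 - |g|^2)(1 + |\Psi^{-1} \hz|^2)}$ for $g = \scrH^\top z$. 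This is strictly positive eventually, since $|\Psi^{-1} g| = |\calH^\top z| \to |z_B| < 1$ by formula $\req{Hzlen}$ (Corollary~\ref{cor:inv}) and Assumption~\ref{asm:eB}. Using $z - z_A = (v^\top z)\, v$ together with $\scrB^\top z_{\perp \scrH} = \scrE_p(\scrH)$ (immediate from $\req{optbias_gps}$), we obtain the clean identity
\[ \scrE_p(\scrH_\sharp) \;=\; \scrB^\top v \;=\; \frac{\scrE_p(\scrH) - (\scrB^\top \scrH)\,\Psi^{-1} \hz}{\sqrt{1 + |\Psi^{-1} \hz|^2}} \]
eventually in $p$, almost surely.

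The main obstacle is to show that the numerator above vanishes asymptotically. Setting $T = \scrB^\top \scrH$, Theorem~\ref{thm:pca}\ref{pca:HBBz} gives $g - T^\top (\scrB^\top z) \to 0$, while Corollary~\ref{cor:inv} gives $T \Psi^{-2} T^\top \to \Id_q$. Combining,
\[ \scrB^\top z \;=\; T \Psi^{-2} T^\top (\scrB^\top z) + o(1) \;=\; T \Psi^{-2} g + o(1) . \]
Substituting this into the identity $\scrE_p(\scrH) \cdot |z - z_\scrH| = \scrB^\top z - T g$ of $\req{optbias_gps}$, and using $\Psi^{-2} - \Id = \Psi^{-1} \Pd$ together with $\Pd g / |z - z_\scrH| = \hz$, yields $\scrE_p(\scrH) = T \Psi^{-1} \hz + o(1)$ almost surely. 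Since $|\hz|$ is bounded almost surely by Theorem~\ref{thm:pcabias} and the entries of $\Psi^{-1}$ are bounded almost surely by Theorem~\ref{thm:pca}\ref{pca:HBBH}, the denominator $\sqrt{1 + |\Psi^{-1} \hz|^2}$ is bounded, and hence $\limp \scrE_p(\scrH_\sharp) = 0_q$ almost surely.

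For the final claim $\limp |\scrH_\sharp^\top \scrB \scrB^\top \scrH_\sharp - \Phi^2| = 0$, I expand $\scrB^\top \scrH_\sharp = (T \Psi + \scrE_p(\scrH)\, \hz^\top)\, \Rv \Phi^{-1}$ and compute
\[ (\scrB^\top A)^\top (\scrB^\top A) = \Psi T^\top T \Psi + \Psi T^\top \scrE_p(\scrH)\, \hz^\top + \hz\, \scrE_p(\scrH)^\top T \Psi + |\scrE_p(\scrH)|^2\, \hz \hz^\top . \]
Substituting $\scrE_p(\scrH) = T \Psi^{-1} \hz + o(1)$, $T^\top T \to \Psi^2$, and $|\scrE_p(\scrH)|^2 \to |\hz|^2$ (this last limit is a consequence of Theorem~\ref{thm:pcabias}), the right side converges to $\Psi^4 + \Psi^2 \hz \hz^\top + \hz \hz^\top \Psi^2 + |\hz|^2 \hz \hz^\top = (\Psi^2 + \hz \hz^\top)^2 = \Rv \Phi^4 \Rv^\top$. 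Therefore $\scrH_\sharp^\top \scrB \scrB^\top \scrH_\sharp = \Phi^{-1} \Rv^\top (\scrB^\top A)^\top (\scrB^\top A) \Rv \Phi^{-1} \to \Phi^2$, completing the proof.
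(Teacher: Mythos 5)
Your proof is correct, and it takes a genuinely different route in the key step. Both arguments reduce to studying $\scrE_p(A)$ with $A = \scrH\Psi + z_{\perp\scrH}\hz^\top$ via Lemma~\ref{lem:K}, and your ``Step 1'' (showing $v^\top z$ is bounded away from zero) is essentially identical to the paper's Step~1: your unit normal $v$ is precisely $\scrH_z\tau_{q+1}$ in the paper's notation, and the quantity $v^\top z$ matches the paper's display $\req{Hztau}$ term for term. The divergence is in how the numerator is killed. The paper introduces the full-rank matrix $T_* = \scrH_z^\top\scrB$, establishes the root identity $\scrE_p(\scrH_z T_*)=0$ by the ``slick'' calculation $\req{slick}$, proves $T_\sharp - T_*K \to 0$ for a suitable $K$ via the limit $\req{HzBBH}$, and then invokes continuity of the projection map $T \mapsto z_{\scrH_z T}$. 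You instead write down a closed-form \emph{finite-$p$ identity}
\[
\scrE_p(\scrH_\sharp) = \frac{\scrE_p(\scrH) - (\scrB^\top\scrH)\,\Psi^{-1}\hz}{\sqrt{1 + |\Psi^{-1}\hz|^2}}
\qquad(\text{valid once } v^\top z > 0),
\]
and prove the numerator vanishes by establishing the asymptotic $\scrE_p(\scrH) = (\scrB^\top\scrH)\Psi^{-1}\hz + o(1)$ from Theorem~\ref{thm:pca}\ref{pca:HBBz} and Corollary~\ref{cor:inv}. This is the ``transposed'' version of the paper's pivotal limit $\req{PHBEH}$, namely $\Psi^{-1}(\scrH^\top\scrB)\scrE_p(\scrH) \to \hz$; the two are equivalent by Corollary~\ref{cor:inv}, but your form avoids introducing $T_*$, $K$, or a continuity argument altogether, and it also hands you the last claim about $\scrH_\sharp^\top\scrB\scrB^\top\scrH_\sharp$ by a short direct expansion rather than the paper's separate computation of $\scrB^\top\scrH_+$. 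The trade-off is that your argument is more computational and leans on the explicit diagonal structure of $\Psi$, whereas the paper's $T_*$-based argument is the one that motivates the construction in $\req{HsharpMat}$. Both are valid; do make sure you explicitly note that $|z-z_\scrH|$ is bounded away from zero (paper's $\req{eq:2.6_Hzstayslessthan1}$) before dividing by it when passing from $\scrE_p(\scrH)|z-z_\scrH| = \scrB^\top z - Tg$ to the asymptotic form of $\scrE_p(\scrH)$.
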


The proof is deferred to Appendix \ref{app:sharp} but we
sketch the derivation of 
$\req{Hsharp}$ and give a geometrical interpretation 
in Sections \ref{sec:gps} and \ref{sec:sketch}.
We take $H_\sharp = \scrH_\sharp \Psi \qeig_p$ to combine the
eigenvector correction $\scrH_\sharp$ with that for the
eigenvalues (see Remark \ref{rem:eigenvals}). Note that
$\scrE_p(\scrH_\sharp) = \scrE_p(H_\sharp)$ by Lemma
\ref{lem:K}. We let
$\hSig_\sharp = H_\sharp H_\sharp^\top + \hat{\sv}^2 \Id$ 
be our covariance estimator where,
identically to {\pca},
we take
$\hat{\sv}^2 = n \kappa_p^2\s /\s[-1] p$ with $\kappa_p^2$
in $\req{snr-bulk}$ or $\req{bulk2}$.

Theorem
\ref{thm:Hsharp} provides theoretical guarantees for many
applications, including that the estimator
$\hSig_\sharp$ is now demonstrated to yield minimum variance
portfolios (i.e., solutions of $\req{minvar}$) with zero
asymptotic variance  (see $\tru^2_p$ in $\req{tru_asymp}$).
Addressing the convergence rate of $\req{limEHsharp}$ is outside
of our scope, but we study this rate numerically in
Section~\ref{sec:numerics}, which shows, at least for Gaussian data,
that rate is $O(1/\s[-1] \sqrt{p})$. This suggests that
$\scrH_\sharp$ yields a bounded discrepancy $\mf_p$ of Theorem
\ref{thm:discrepancy} under some conditions. 

The last part of Theorem \ref{thm:Hsharp} concerns the inner
products of the columns of $\scrH_\sharp$ projected
onto $\col{B}$. This is in direct comparison to
Theorem \ref{thm:pca}\ref{pca:HBBH} which shows that the sample
eigenvectors $\scrH$ are orthogonal in $\col{B}$ and the same is 
true for the columns of $\scrH_\sharp$ since $\Phi^2$ is diagonal.
Selecting the $j$th column $h_\sharp$ of $\scrH_\sharp$ we have,
\[  \limp   | \s h_{\sharp B}| \s  \Phi^{-1}_{\jj} = 1 \s , \]
as compared with 
$\limp   | \s h_{B}| \s  \snr^{-1}_{\jj} = 1$
in
$\req{hBh_B}$. Note,
$\snr^2_{\jj} \le \Phi^2_{\jj}$ eventually with a strict inequality
when $|\varphi|$ is bounded away from zero (in $p$) due to
$\req{Phi}$. Thus the length $|h_{\sharp B}|$ of 
$h_\sharp$ projected onto $\col{B}$ is at least as large
as for its counterpart 

\subsection{Remarks on the GPS program}
\label{sec:gps}
The special case $q = 1$ was considered by \ci{goldberg2022} (henceforth
{\gps}) who apply their results to portfolio theory. We summarize
the relevant parts of the {\gps} program making adjustments for
greater generality and compatibility with our solution in
Section \ref{sec:sketch}.

Here, $\req{bSig}$ takes the form
$\bSig = \beta \beta^\top + \Sv$ where $\beta \in \bbR^p$
and Assumption \ref{asm:afm} requiring  a
sequence $\beta = \beta_{p \times 1}$ for which
$\ip{\beta}{\beta}/p$ converges in $(0, \infty)$. The
sample covariance matrix may be written as
$\Sam = \eta \eta^\top + G$ where $\seig^2 = 
\ip{\eta}{\eta}
= \max_{v \in \bbR^p} \ip{v}{\Sam v} / \ip{v}{v}$
is the largest eigenvalue with eigenvector $h = \eta / |\eta|$
and the matrix $G$ contains the remaining spectrum per 
$\req{spectral}$. Setting $b = \beta / |\beta|$ yields,
\begin{align} \label{Eph}
\scrE_p(h)  
&= \frac{\ip{b}{z} -\ip{b}{h} \ip{h}{z}}{\sqrt{1 - \ip{h}{z}^2}}
\end{align}
for the quadratic
optimization bias $\req{optbias_gps}$ in the case 
$q = 1$. Our $\req{Eph}$ uses a different
denominator than  GPS, but this difference is	not essential.
Our $\Sv$ generalizes the choice of a scalar
matrix in GPS and our Assumption \ref{asm:asymp} relax
their conditions.

The GPS program assumes (w.l.o.g.) that $\ip{b}{z} \ge 0$ and 
$\ip{h}{z} \ge 0$, enforces Assumption \ref{asm:eB}
so that $\lsup \ip{b}{z} < 1$, and takes the following steps.

\begin{enumerate}[label=\text{(\arabic*)},itemsep=0.0in,leftmargin=*]
\item \label{gps1}
Find asymptotic estimators for
 unknowns $\ip{b}{z}$ and $\ip{h}{b}$ in $\req{Eph}$. To this end, 
for the observed $\psi^2 =
1 - \frac{\s \tr(G)/\seig^2}{(n_+-1)}$ (c.f., $\req{snr-bulk}$), 
under Assumption \ref{asm:asymp} 
almost surely,
\begin{align} \label{gps}
 \limp  | \ip{b}{h} - \psi| = 0 < \limp \psi < 1 
\text{ and }
\limp  | \ip{h}{z} - \ip{h}{b} \ip{b}{z} | = 0  \s .
\end{align}
\item \label{gps2} Consider the estimator
$h_z t= \frac{h + t z}{|h + t z|}$ parametrized by $t \in \bbR$
so that $\ip{h_z t}{z}$ increases in $t \ge 0$. This 
construction is motivated by the $\ip{h}{b}\ip{h}{z}$ in the 
numerator of $\req{Eph}$ 
becoming eventually less than $\ip{b}{z}$ almost surely, per $\req{gps}$.

\item \label{gps3} Solve $\scrE_p(h_zt) = 0$ for $t = \tau_*$ as 
a function of the unknowns $\ip{h}{b}$ and $\ip{b}{z}$.
Leveraging $\req{gps}$, construct an observable $\tau$ 
such that $|\tau_* - \tau| \to 0$ as $p \upto \infty$ and
prove a uniform continuity of $\scrE_p(\s \cdot \s )$
to establish  that $\limp \scrE_p(h_z \tau) = 0$.
\end{enumerate}

These steps cannot be easily extended to the setting of general
$q$.  Step \ref{gps1} is no longer possible in view of Theorem
\ref{thm:angles}, and indeed, the \tq{sign} conventions
$\ip{b}{z} \ge 0$ and $\ip{h}{z} \ge 0$ cannot be appropriated
from the univariate case given that result. Step \ref{gps2} is
difficult to extend because its intuition becomes obscure for
general $\bbR^q$ where the vector $\scrE_p(\scrH)$ resides. Step
\ref{gps3} relies on basic calculations to determine the root of
a univariate function.  Determining roots in $\bbR^q$,
especially without the right parametrization in step \ref{gps2},
appears difficult given the definition of $\scrE_p(\s \cdot \s)$
in $\req{optbias_gps}$.


We make some adjustments to prime our
approach in Section \ref{sec:sketch}. First, write
\begin{align} \label{gpsq}
 \limp  | \ip{b}{h}^2 - \psi^2| =
\limp  | \ip{h}{z} - \ip{h}{b} \ip{b}{z} | = 0  \s .
\end{align}
as a replacement for $\req{gps}$.
This drops the sign conventions on $\ip{b}{z}, \ip{h}{z}$
to reformulate step \ref{gps1} for compatibility with 
the findings of Theorem \ref{thm:pca} parts \ref{pca:HBBH}--\ref{pca:HBBz}.

Our adjustment to step \ref{gps2} sacrifices its intuition
for additional degrees of freedom. In particular,
for $z_{\perp h} = \frac{z - z_h}{|z - z_h|}$
and $z_h = \ip{h}{z} h$ (c.f., $\req{zpH}$), set
\begin{align} \label{hzt}
\s[32] h_z t =  t_1 \s h +  t_2 \s z_{\perp h}
\s[32] (t_1, t_2 \in \bbR \cst  t_1^2 + t^2_2 =1) \s . 
\end{align}
 This two-parameter estimator $h_z t$ parametrizes the 
quadratic optimization bias as,
\begin{align} \label{Ehzt}
\scrE_p(h_z \s t)  
&= \frac{\ip{b}{z} -
(t_1 \ip{b}{h} + t_2 \scrE_p(h) ) 
(t_1 \ip{h}{z} + t_2 \sqrt{1 - \ip{h}{z}^2}) }{\sqrt{1 - |h_z t z|^2}}
 \s .
\end{align}
It is not difficult to verify that setting $t_1 \propto \ip{h}{b}$
and $t_2 \propto \scrE_p(h)$ in the above display leads to 
the identity
$\scrE_p(h_z t_*) = 0$ with  $t_* = 
\frac{(\ip{h}{b},\scrE_p(h))}{\sqrt{\ip{h}{b}^2
+ \scrE_p^2(h)}}$. Finally, the parameter
\begin{align} \label{tsK}
\s[32]  t_* K =  \frac{t_* \ip{h}{b}}{|\ip{h}{b}|} 
= \frac{(\ip{h}{b}^2, \s \ip{h}{b} \scrE_p(h))}
{\sqrt{\ip{h}{b}^4 + \ip{h}{b}^2 \scrE_p^2(h)}}
\s[32] \Big( K = \frac{\ip{h}{b}}{|\ip{h}{b}|}  \Big)
\end{align}
also has the property that $\scrE_p(h_z t_* K) = 0$
but $t_* K$ admits asymptotic estimators via 
the replacement $\req{gpsq}$ of $\req{gps}$.
This modifies step \ref{gps3} of the GPS program to
find an asymptotic root of $\scrE_p(\s \cdot \s)$ 
without any sign conventions on $\ip{b}{z},\ip{h}{z}$.
While these changes are somewhat trivial for the case 
$q=1$, our
understanding of them is informed by the case $q > 1$
and initiated by the impossibility result in Theorem \ref{thm:angles}.

\subsection{Sketch of the derivation of $\scrH_\sharp$}
\label{sec:sketch}
We begin by defining a matrix $\scrH_z$ 
composed of $(q + 1)$ orthonormal columns,
derived from $\scrH$ in $\req{Jc}$ and
$\zpH$ in $\req{zpH}$, i.e.,  
\begin{align} \label{Hz} 
\scrH_z 
= \big( \scrH \s[8] \zpH \big) 
= \Big( \scrH \s[16] \frac{z-z_\scrH}{|z-z_\scrH|} \Big)  \s ,
\end{align}
so that $\col{\scrH_z}$ expands $\col{\scrH}$ by the vector $z  =
\frac{\zv}{|\zv|}$. We introduce a parametrized
estimator $\scrH_z T$ for a full rank matrix $T$, derive
a root $T_*$ of the map $T \mapsto \scrE_p(\scrH_z T)$,
and construct  an asymptotic estimator of $T_*$
by applying Theorem \ref{thm:pca}\ref{pca:HBBH}--\ref{pca:HBBz}.

We consider the following family of estimators
with $\req{hzt}$ as a special case.
\begin{align} \label{Tfamily}
 \big\{ \s \scrH_z T \cst T \in \bbR^{(q + 1) \times q}
\text{ with } T^\top T \in \bbR^{q \times q} \text{ invertible}
\big\} \s .
\end{align}

Any $\scrH_z T$ in this family is a $p \times q$ matrix of full
rank. We have $T = (t_1 \s[4] t_2)^\top$
for $q =1$, but the constraint on
$ T^\top T = t_1^2 + t_2^2$ imposed by $\req{hzt}$ is relaxed 
in $\req{Tfamily}$. 

Substituting $\scrH_z T$ into the optimization
bias function in $\req{optbias}$, we obtain 
\begin{align} \label{EHT}
\scrE_p(\scrH_z T) = \frac{\scrB^\top z 
- \scrB^\top \scrH_z (TT^\dagger) \scrH_z^\top z}
{1 - |z_{\scrH_z T}|^2} 
\end{align}
where we have used that $\scrH_z^\top \scrH_z = \Id$.
The expression $\req{EHT}$
is obscure, but we note that $T^\top (TT^\dagger) = T^\top$ 
which suggests a simplification post $T^\top_* = 
\scrB^\top \scrH_z$, i.e., 
\begin{align} \label{slick} 
\scrE_p(\scrH_z T_*) 
&= \frac{\scrB^\top z - T_*^\top \scrH_z^\top z}
{1 - |z_{\scrH_z T_*}|^2} 
= \frac{\scrB^\top z - \scrB^\top \scrH_z \scrH_z^\top z}
{1 - |z_{\scrH_z \scrH_z^\top \scrB}|^2}  =  0 \s ,
\end{align}
provided $T^\top_* T_*$
is invertible and $|z_{\scrH_z T_*}| < 1$ (see
Appendix \ref{app:sharp}).
For last equality we use that
$\scrH_z \scrH_z^\top z
 = z_{\scrH_z} = z$ 
(i.e., the projection of $z$ onto $\col{\scrH_z}$ is $z$
itself). We remark that the matrix formalism of $\req{slick}$
has advantages even over the special case 
in $\req{Ehzt}$. Figure \ref{fig:geom} illustrates
geometry of the transformation $T \mapsto \scrH_z T$ at $T = T_*$.

\vspace{-0.04in}
\begin{figure}[htp!]
\centering
\includegraphics[width=5in]{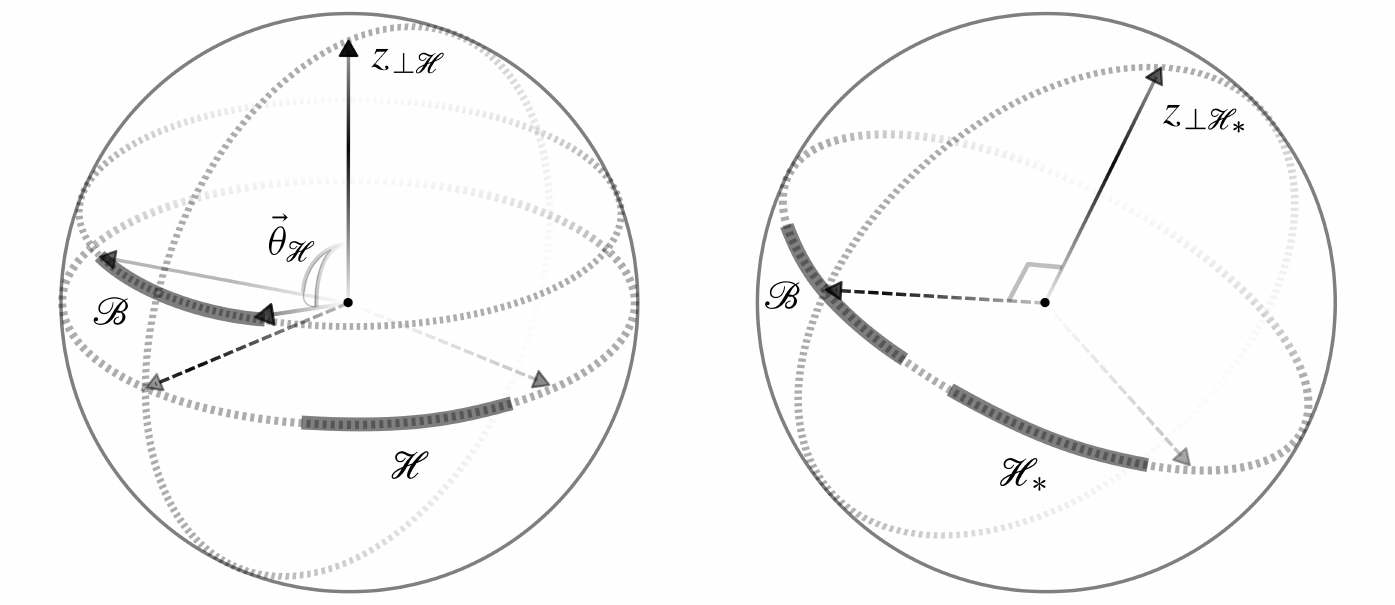}
\vspace{-0.08in}
\caption{\textup{Left panel:} Illustration of $\col{\scrH}$ 
relative to $\zpH$ in $\req{zpH}$. The angles $\vec{\theta}_\scrH$ 
between $\zpH$ and $\col{\scrB}$ are the arc cosines of 
the entries of $\scrE_p(\scrH)
= \scrB^\top \zpH$.
\textup{Right panel:} The optimal
$T_* = \scrH_z^\top \scrB$ leads to a basis 
$\scrH_*$ that spans $\col{\scrH_z T_*}$ and leads to 
$\scrB^\top z_{\perp \scrH_*} = \scrE_p(\scrH_*) = 0$,
setting each entry of $\vec{\theta}_{\scrH_*}$
to $90^\circ$.}
\label{fig:geom}
\end{figure}
\vspace{-0.04in}

The slick calculation above does not constitute our original
derivation which is heavy-handed and superfluous. The advantage
of $\req{slick}$ lies in its brevity and its quick bridge to the
GPS program.  Yet, $\req{slick}$ is not sufficient in view of
Theorem \ref{thm:angles}, i.e.,  the optimal point $T_* =
\scrH_z^\top \scrB$ is not observed nor can it be estimated from
the observed data.  To this end, we seek an invertible matrix
$K$ for which (similarly to $\req{tsK}$), 
\[   T_* K =  \scrH_z^\top \scrB K \]
may be estimated solely from  $\Y$
and use Lemma \ref{lem:K} to conclude
that $\scrE_p(\scrH_z T_* K) = 0$ provided $K$ is invertible.
The choice $K$ is motivated by the fact that as $p \upto \infty$,
\begin{align} \label{HzBBH} 
 \scrH_z^\top \scrB \scrB^\top \scrH 
  = \left( \begin{array}{c}
 \scrH^\top \scrB\scrB^\top \scrH \\ 
  \scrE^\top_p(\scrH) \scrB^\top \scrH
  \end{array} \right)  
  \to \limp \left( \begin{array}{c} \snr \\ 
  \hz^\top
  \end{array} \right)   \snr 
\end{align}
where we used $\scrB \zpH = \scrE_p(\scrH)$ in $\req{optbias_gps}$
and applied Theorem \ref{thm:pca} to obtain the 
stated almost sure convergence (see Appendix \ref{app:sharp}
for details). The variables in the limit 
are computable from the data $\Y$ and it is again
notable 
that while we are unable to estimate $\scrE_p(\scrH)$,
the quantity $\scrE^\top_p(\scrH) (\scrB^\top \scrH)$
admits an estimator as did $|\scrE_p(\scrH)|$.

Our estimator $\req{Hsharp}$ is now easily seen to take the 
following form.
\begin{align} \label{HsharpMat}
\s[32] \scrH_\sharp = \scrH_z T_\sharp  \s ,
\s[32] T_\sharp = 
\Big( \begin{array}{c}
  \snr \\ \hz^\top  \end{array} \Big) \scrM \Phi^{-1} 
\end{align}
This suggests taking $K = \scrB^\top \scrH \snr^{-1}
\Rv \Phi^{-1}$ because $\req{HzBBH}$ now implies that
\begin{align} \label{Tprime}  
\s[32] \limp | T_* K
-  T_\sharp | = 0  \s .
\end{align}

Appendix \ref{app:sharp} proves the $K$ is eventually invertible
and applies $\req{Tprime}$ to deduce that $\scrE_p(\scrH_z T_*
K) = 0$ implies the desired conclusion $\limp \scrE_p(\scrH_z
T_\sharp) =0$.

\section{A Numerical Example}
\label{sec:numerics}

We illustrate our results on a numerical example to
provide a verification of Theorems \ref{thm:pca},
\ref{thm:pcabias} and \ref{thm:Hsharp}. We also study the
convergence rates of various estimators, which are not supplied
by our theory. Consider i.i.d. observations of $y \in
\bbR^{\pu}$ where,
\begin{align} \label{model}
 y = \alpha +  B x + \epsilon  \s ,
\end{align} 
with $\alpha \in \bbR^{\pu}$ and a $\pu \times q$ matrix $B$, that
are realized over uncorrelated $x \in \bbR^q$ and $\epsilon \in
\bbR^{\pu}$ with $\var(x) = \Id_q$ and $\var(\ep) = \Sv$. Then,
$\var(y) = \bSig = BB^\top + \Sv$.  Fixing $n = 120$, $q = 7$, 
$\pu = 128,000$, we simulate  $\pu \times n$ data matrices
$\Y$ with observations of $y$ as its columns.  The parameters
$\alpha, B, \Sv$ 
are calibrated in Section~\ref{sec:model} 
with the minimum variance portfolio problem 
described in Section
\ref{sec:risk} in mind.

We simulate $10^5$ data matrices $\Y_{\pu \times n}$, selecting
subsets $\Y_{p\times n}$ of size $p \times n$ by taking $p =
500, 2000, 8000, 32000, 128000$ to study the asymptotics of
three estimators.  All  three are based on a centered
sample covariance $\Sam = \Y\Jc \Y^\top /n$ (see $\req{Jc}$),
the spectrum of which equals that of $\Jc \Y^\top \Y \Jc / n$
and is computed from this $n \times n$ matrix.  This results in
a $7 \times 7$ diagonal matrix $\qeig^2_p$ with the $7$
largest eigenvalues of $\Sam$, as well as the $\snr^2$ in
$\req{snr-bulk}$ and $\kappa^2_p$ in $\req{bulk2}$. Our three
estimators have the form,
\begin{align} \label{nSig}
\s[32] \hSig_\natural = \scrH_\natural (\qeig_p \snr  )^2 
\scrH^\top_\natural + \hat{\sv}^2 \Id \s ,
\s[16]
(\scrH_\natural \in \{ \scrH, \scrH_\flat, \scrH_\sharp\}),
\end{align}
where $\hat{\sv}^2 = n \kappa_p^2 /\s[-1] p$
and $\scrH_\natural$ 
is one of three $p \times 7$ matrices of orthonormal columns.
\begin{enumerate}[itemsep=0.025in]
\item[($\scrH$)] The sample eigenvectors $\scrH 
=\nu_{p \times 7}(\Sam)$ are computed per $\req{efficient}$
using the matrix  $\Jc \Y^\top \Y \Jc / n$. These vectors
correspond to a {\pca} covariance model in $\req{nSig}$.
\item[($\scrH_\flat$)] The matrix $\scrH_\flat$ will use
the {\gps} estimator of Section \ref{sec:gps} to issue a 
correction to only the first column of $\scrH$. In particular,
we let $\scrK$ equal $\scrH$ in columns $2$--$7$ and replace
its first columns by $h_z t / |h_z t|$ with $h_z t$ in $\req{hzt}$
and $(t_1,t_2)$ given by $\req{tsK}$. Finally, we set
$\scrH_\flat = \nu_{p \times 7}(\scrK \qeig_p \snr)$
computed analogously to $\req{efficient}$
for efficiency.
\item[($\scrH_\sharp$)] The corrected sample
eigenvectors $\scrH_\sharp$ are computed using
$\req{zpH}$--$\req{Hsharp}$.
\end{enumerate}

To assess the performance of the three covariance estimators 
in $\req{nSig}$ we test them on several metrics. With respect
to the minimum variance portfolio application, for  
$\bSig = \var(y)$ and $y \in \bbR^p$ in $\req{model}$,
the returns to $p$ financial assets, we compute
\begin{align}\label{sigmin} 
\s[32] \sigma^2_{\min} = \min_{\ip{1_p}{w} = 1}
\ip{w}{\bSig w}  \s[32] (w \in \bbR^p) \s ,
\end{align}
the true minimum variance. We compare the volatility
$\sigma_{\min}$ to the realized volatility,
$\tru_p(\scrH_\natural) = \sqrt{\ip{\hw_\natural}{\bSig \hw_\natural}}$ 
(see $\req{tru}$) of $\hw_\natural \in \bbR^p$ that minimizes
$\req{minvar}$ with $\hSig = \hSig_\natural$.

We also study the length $|\scrE_p(\scrH_\natural)|$ of
the quadratic optimization bias, 
the true and realized quadratic optima
(taking $\cc=\cl=1$ in $\req{conv}$ and $\req{Qhx}$)
of Section \ref{sec:motive},
\begin{align} \label{numobj}
 \max_{x \in \bbR^p} Q(x) = 1 + \frac{\nz^2_p }{2} \s ,
\s[32]  Q(\hx) = 1 + \frac{\hn^2_p }{2} \mf_p \s ,
\end{align}
and their discrepancy $\mf_p = 2 - \hn_p^2 \tru^2_p$.
The  $\nz^2_p = \ip{1_p}{\bSig^{-1} 1_p}$
and $\hn^2_p = \ip{1_p}{\hSig^{-1}_\natural 1_p}$ (as well as
$\hx_\natural = \hSig_\natural^{-1} 1_p$, and minimizers of
$\req{minvar}$, $\req{sigmin}$)
are efficiently computed via the Woodbury identity
to obtain the inverses of the 
covariance matrices $\bSig$ and $\hSig_\natural$.

\begin{figure}[htp!] 
\centering
\includegraphics[width=5in]{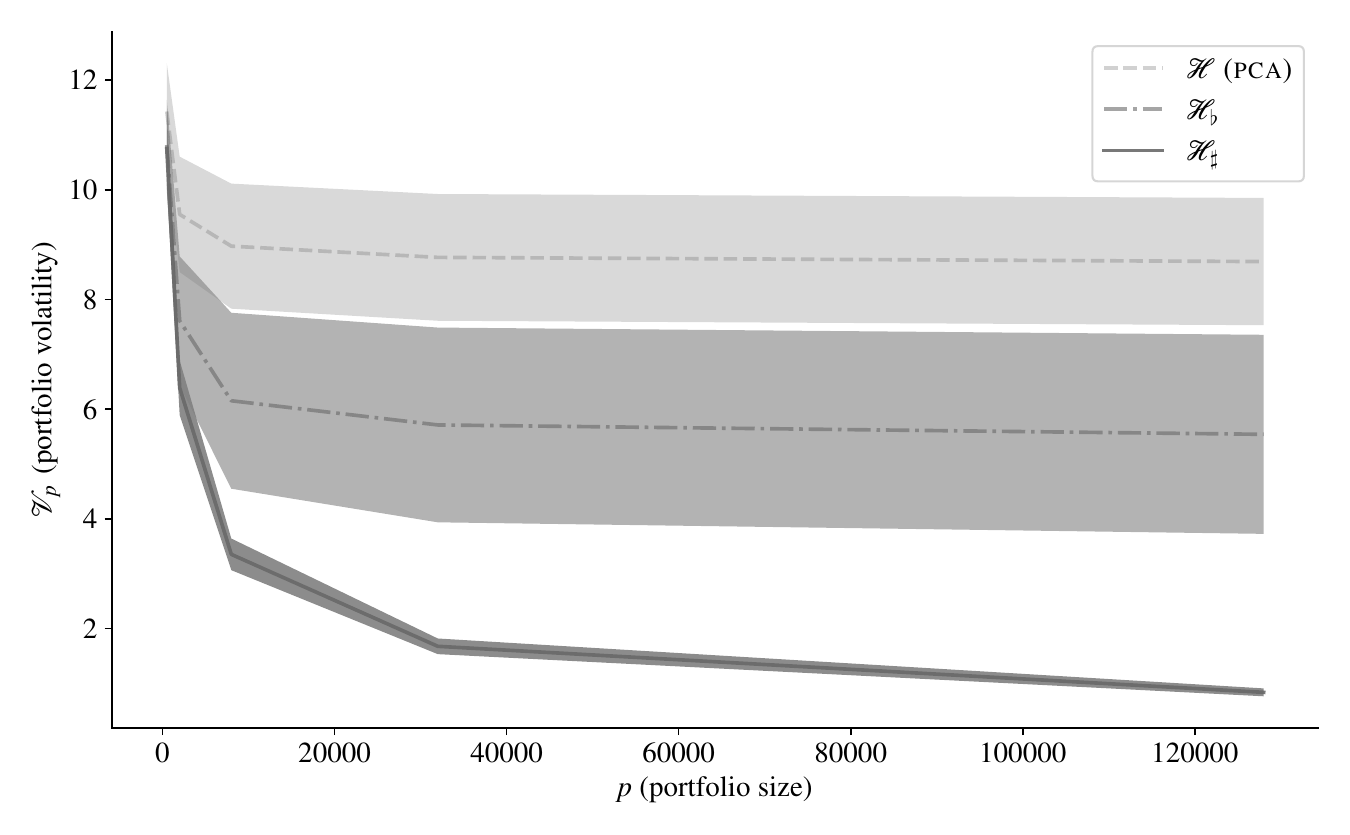}
\caption{Realized minimum variance portfolio volatility
versus portfolio size
$p$ with lines, the sample means of Table \ref{tab:vol},
and two standard deviation confidence intervals.}
\label{fig:vol}
\end{figure}

\begin{figure}[htp!] 
\centering
\includegraphics[width=5in]{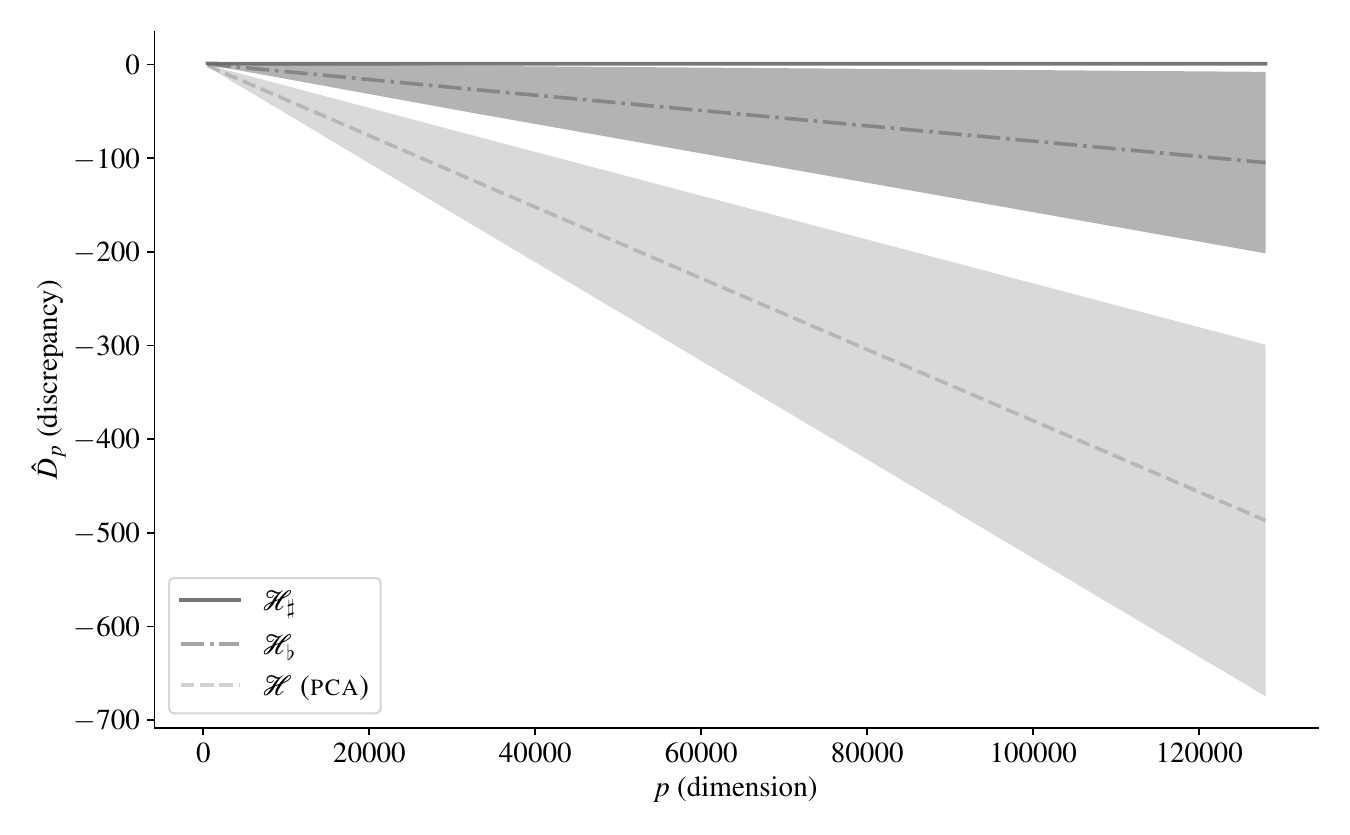}
\caption{Discrepancy $\mf_p$ versus dimension $p$ with 
lines, the sample means found in Table \ref{tab:discrep} and
Table \ref{tab:gps}, and two standard deviation confidence
intervals, (the confidence intervals for the estimator
$\scrH_\sharp$ are too narrow to be clearly visible).}
\label{fig:discrep}
\end{figure}

\begin{table} 
\begin{center} 
\begin{tabular}{rcccc} 
\toprule 
$p$ & $\sigma_{\min}$ & $\Exp \s \tru_p(\scrH)$ & $\Exp \s \tru_p(\scrH_{\flat})$ & $\Exp \s \tru_p(\scrH_{\sharp})$ \\ 
\midrule 
$500$ & $7.69$ & $11.43$ & $10.82$ & $10.75$ \\ 
$2000$ & $4.09$ & $9.55$ & $7.61$ & $6.39$ \\ 
$8000$ & $2.08$ & $8.97$ & $6.15$ & $3.35$ \\ 
$32000$ & $1.04$ & $8.77$ & $5.71$ & $1.68$ \\ 
$128000$ & $0.52$ & $8.69$ & $5.54$ & $0.84$ \\ 
\bottomrule 
\end{tabular} 
\end{center} 
\caption{Realized minimum variance portfolio volatilities (square root of $\tru_p^2$) computed using the three covariance estimators ${\hSig}$ ({\pca}), ${\hSig}_{\flat}$ and ${\hSig}_{\sharp}$. Sample mean estimates for expectation $\Exp \s X$ of column variable $X$ based on $10^5$ simulations.} 
\label{tab:vol} 
\end{table}

\begin{table} 
\begin{center} 
\begin{tabular}{rcrrcc} 
\toprule 
$p$ & $\max_x \s[-1] Q(x)$ & $\Exp \s Q(\hat{x})$ & $\Exp \s \mf_p(\scrH)$ & $\Exp \s Q(\hat{x}_{\sharp})$ & $\Exp \s \mf_p(\scrH_{\sharp})$ \\ 
\midrule 
$500$ & $1.01$ & $0.99$ & $-1.16$ & $1.0$ & $1.22$ \\ 
$2000$ & $1.03$ & $0.64$ & $-7.11$ & $1.01$ & $0.93$ \\ 
$8000$ & $1.12$ & $-4.98$ & $-30.04$ & $1.04$ & $0.85$ \\ 
$32000$ & $1.47$ & $-97.01$ & $-121.81$ & $1.18$ & $0.86$ \\ 
$128000$ & $2.88$ & $-1572.9$ & $-486.92$ & $1.7$ & $0.87$ \\ 
\bottomrule 
\end{tabular} 
\end{center} 
\caption{Realized maximum $Q(\hx)$ and discrepancy $\mf_p(\scrH)$ of {\pca} for growing $p$ are compared with $Q(\hx_{\sharp})$ and  $\mf_p(\scrH_{\sharp})$, computed using the covariance $\hSig_{\sharp}$. The true maximum in column $2$ applies the true covariance matrix $\bSig$. Sample mean estimates for expectation $\Exp \s X$ of column variable $X$ based on $10^5$ simulations.} 
\label{tab:discrep} 
\end{table}

\begin{table} 
\begin{center} 
\begin{tabular}{rcccrr} 
\toprule 
$p$ & $\Exp \s |\hz|$ & $\Exp \s |\scrE_p(\scrH)|$ & $\Exp \s
|\scrE_p(\scrH_{\sharp})|$ & $p \s \Exp \s |\scrE_p(\scrH)|^2$ &
$p \s \Exp \s |\scrE_p(\scrH_{\sharp})|^2$ \\ 
\midrule 
$500$ & $0.237$ & $0.298$ & $0.19$& $44.5$ & $18.05$ \\ 
$2000$ & $0.219$ & $0.26$ & $0.132$& $135.6$ & $34.88$ \\ 
$8000$ & $0.222$ & $0.237$ & $0.051$& $447.8$ & $21.06$ \\ 
$32000$ & $0.225$ & $0.228$ & $0.018$& $1663.9$ & $10.03$ \\ 
$128000$ & $0.227$ & $0.228$ & $0.008$& $6629.1$ & $7.68$ \\ 
\bottomrule 
\end{tabular} 
\end{center} 
\caption{Quadratic optimization bias length $|\scrE_p(\scrH)|$ for {\pca}, its estimator $|\hz|$ of Theorem \ref{thm:pcabias} and $|\scrE_p(\scrH_{\sharp})|$ are shown for growing $p$. Scaled variables $p \s |\scrE_p(\scrH)|^2$ and $p \s |\scrE_p(\scrH_{\sharp})|^2$ are provided to illustrate the convergence rates. Sample mean estimates for expectation $\Exp \s X$ of column variable $X$ based on $10^5$ simulations.} 
\label{tab:optbias} 
\end{table}

\begin{table} 
\begin{center} 
\begin{tabular}{rcrrrr} 
\toprule 
$p$ & $\max_x \s[-1] Q(x)$ & $\Exp \s Q(\hat{x}_{\flat})$ & $\Exp \s \mf_p(\scrH_{\flat})$ & $\Exp \s |\scrE_p(\scrH_{\flat})|$ & $p \s \Exp \s |\scrE_p(\scrH_{\flat})|^2$ \\ 
\midrule 
$500$ & $1.01$ & $1.0$ & $0.52$  & $0.278$ & $38.7$ \\ 
$2000$ & $1.03$ & $0.97$ & $-0.97$  & $0.236$ & $111.8$ \\ 
$8000$ & $1.12$ & $0.37$ & $-5.92$  & $0.211$ & $354.5$ \\ 
$32000$ & $1.47$ & $-10.27$ & $-26.19$  & $0.201$ & $1292.0$ \\ 
$128000$ & $2.88$ & $-179.42$ & $-104.76$  & $0.2$ & $5138.0$ \\ 
\bottomrule 
\end{tabular} 
\end{center} 
\caption{Discrepancy and optimization bias metrics reported in Tables \ref{tab:optbias} \& \ref{tab:discrep} recomputed with the vectors $\scrH_{\flat}$ and the corresponding covariance estimator. Sample mean estimates for expectation $\Exp \s X$ of column variable $X$ based on $10^5$ simulations.} 
\label{tab:gps} 
\end{table}

\begin{table} 
\begin{center} 
\begin{tabular}{rcccc} 
\toprule 
$p$ & $\Exp\s |\scrH^\top_{\sharp}\scrB\scrB^\top\scrH_{\sharp} - \Phi^2|$ & $\Exp \s |\Phi^2|$ & $\Exp \s |\scrH^\top \scrB\scrB^\top\scrH - \snr^2|$ & $\Exp \s |\snr^2|$ \\ 
\midrule 
$500$ & $0.502$ & $0.9805$ & $0.5023$ & $0.962$ \\ 
$2000$ & $0.2938$ & $0.9807$ & $0.2952$ & $0.9661$ \\ 
$8000$ & $0.0839$ & $0.9812$ & $0.0875$ & $0.9677$ \\ 
$32000$ & $0.0251$ & $0.9812$ & $0.0262$ & $0.9679$ \\ 
$128000$ & $0.0096$ & $0.9811$ & $0.0099$ & $0.9678$ \\ 
\bottomrule 
\end{tabular} 
\end{center} 
\caption{The inner products of the columns of $\scrH$ ({\pca}) and $\scrH_{\sharp}$ after projection onto $\col{\scrB}$, and  their estimators $\snr^2$ and $\Phi^2$. The norms $|\snr^2|$ and $|\Phi^2|$ estimate the largest, squared projected lengths of the columns of $\scrH$ and $\scrH_{\sharp}$ respectively. Sample mean estimates for expectation $\Exp \s X$ of column variable $X$ based on $10^5$ simulations.} 
\label{tab:HB} 
\end{table}

\subsection{Discussion of the results} \label{sec:results} Table
\ref{tab:vol} and Figure \ref{fig:vol} summarize the simulations
for our minimum variance portfolio application. Volatilities are
quoted in percent annualized units (see Section \ref{sec:model})
and only portfolio sizes $p =500, 2000, 8000$ should be
considered as practically relevant. Three sets of  portfolio
weights ($\hw, \hw_{\flat}, \hw_\sharp \in \bbR^p$) constructed
with the covariance models in $\req{nSig}$ are tested.  As
predicted by $\req{tru_asymp}$, the realized portfolio
volatility $\tru_p(\scrH)$ for the {\pca}-model weights $\hw$ in
the third column of Table \ref{tab:vol} remains bounded away
from zero (on average).  The same holds for the partially
corrected estimator $\scrH_{\flat}$, which substantially
decreases the volatility of the {\pca} weights for larger
portfolios. This estimator was also tested for $q = 4$ in
\ci{goldberg2020}, but for a model in which $(\scrB^\top z)_j
\to 0$ as $p \upto \infty$ for $j = 2,3,4$. In this special
case, the estimators $\scrH_{\flat}$ and $\scrH_\sharp$ coincide
asymptotically.  In our more realistic model of Section
\ref{sec:model}, all sample eigenvectors require correction as
evident by comparing $\tru_p(\scrH_\flat)$ and
$\tru_p(\scrH_\sharp)$ in Table \ref{tab:vol}. The latter
portfolio volatility decays at the rate of roughly $1/\s[-2]
\sqrt{p}$. The true volatility $\sigma_{\min}$ (second column of
Table \ref{tab:vol}) also decays at this rate.
Figure~\ref{fig:vol} depicts the much larger deviations about
the average that the estimators $\scrH$ and $\scrH_\flat$
produces on the portfolio volatility metric relative to
$\scrH_\sharp$. Surprisingly, $\scrH_{\flat}$ produced the
largest such deviations.

Table \ref{tab:discrep} and Figure \ref{fig:discrep} compare the
{\pca}-model $\hSig$ to our optimization-bias free estimator
$\hSig_\sharp$ on the quadratic function objectives in
$\req{numobj}$. As predicted in Section~\ref{sec:motive} and
Theorem \ref{thm:discrepancy} in particular, the true objective
value (the second column of Table~\ref{tab:discrep}) increases
in $p$ while the realized objective $Q(\hx)$ decreases rapidly.
The (expected) discrepancy $\mf_p(\scrH)$ of the {\pca}-model is
shown to diverge to negative infinity linearly with the
dimension as predicted by Theorem \ref{thm:discrepancy} (i.e,
largest $q$ eigenvalues of the covariance model of Section
\ref{sec:model} diverge in $p$). The last two columns of Table
\ref{tab:discrep} confirm the realized maximum and discrepancy
produced by the corrected eigenvectors $\scrH_\sharp$ behave in
a more desirable way.  The discrepancy $\mf_p(\scrH_\sharp)$
appears to converge in a neighborhood of the optimal value one,
while the realized maximum $Q(\hx_\sharp)$ has a trend similar
to that of the true maximum.  Figure \ref{fig:discrep} shows the
large uncertainly of the average behaviour summarized in Table
\ref{tab:discrep} that results from using the sample
eigenvectors $\scrH$ or their partially corrected version,
$\scrH_\flat$ (see Table \ref{tab:gps} for the averages).  The
uncertainty produced by the corrected eigenvectors
$\scrH_\sharp$ is negligible by comparison. 

Table \ref{tab:optbias} summarizes our numerical results on the
length of the quadratic optimization bias $|\scrE_p(\s \cdot
\s)|$ for the sample eigenvectors $\scrH$ and the corrected
vectors $\scrH_\sharp$. Table \ref{tab:gps} supplies the same
for the partially corrected eigenvectors $\scrH_\flat$. The
first three columns of Table \ref{tab:optbias} confirm the
findings of Theorem \ref{thm:pcabias}, i.e., the length
optimization bias $|\scrE_p(\scrH)|$ for {\pca} may be
accurately estimated from observable data in higher dimensions.
We find that the expected length $|\scrE_p(\scrH)|$ converges
away from zero, and that $ p \s |\scrE_p(\scrH)|^2$ diverges in
expectation.  This is predicted by Theorem \ref{thm:pcabias}
since $\scrH^\top z \in \bbR^7$ does not vanish as $p$ grows.
Table \ref{tab:gps} presents similar findings for $\scrH_\flat$,
which we have not analyzed theoretically.  Column $4$ of
Table~\ref{tab:optbias} confirms the predictions of Theorem
\ref{thm:Hsharp}, i.e., the corrected bias length
$|\scrE_p(\scrH_\sharp)|$ vanishes as $p$ grows. Our numerical
finding expand on this by also demonstrating that $p \s
|\scrE_p(\scrH_\sharp)|^2$ appears to be bounded (in
expectation). This suggest a convergence rate of $O(1 / \s[-2]
\sqrt{p})$ for the corrected bias $|\scrE_p(\scrH_\sharp)|$.
The latter is consistent with the asymptotics of
$\mf_p(\scrH_\sharp)$ in Table~\ref{tab:discrep} which Theorem
\ref{thm:discrepancy} forecasts to behave as $O(1) ( 1 - p \s
|\scrE_p(\scrH_\sharp)|^2)$.

Table \ref{tab:HB} provides support for 
Theorem \ref{thm:pca}\ref{pca:HBBH} and Theorem \ref{thm:Hsharp} 
which concerns the projection of the estimated eigenvectors
onto the population subspace $\col{\scrB}$. The
convergence verified in columns two and four show that
the vectors in $\scrH$ and $\scrH_\sharp$ remain
orthogonal after projection onto $\col{\scrB}$ because
$\Phi^2$ and $\snr^2$ are diagonal matrices. The largest
elements of these matrices (presented as averages
in columns three and five) estimate the largest length
squared of the columns of $\scrH$ and $\scrH_\sharp$ 
in $\col{\scrB}$
respectively. This confirms $\scrH_\sharp$ has a larger
such projection than does $\scrH$.

\begin{figure}[htp!] 
\centering
\includegraphics[width=2.75in]{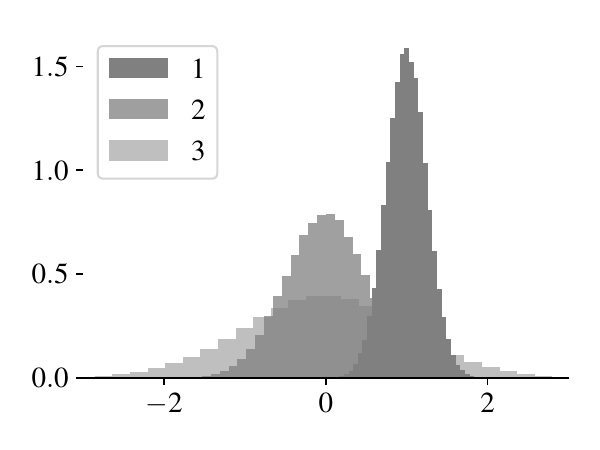}
\includegraphics[width=2.75in]{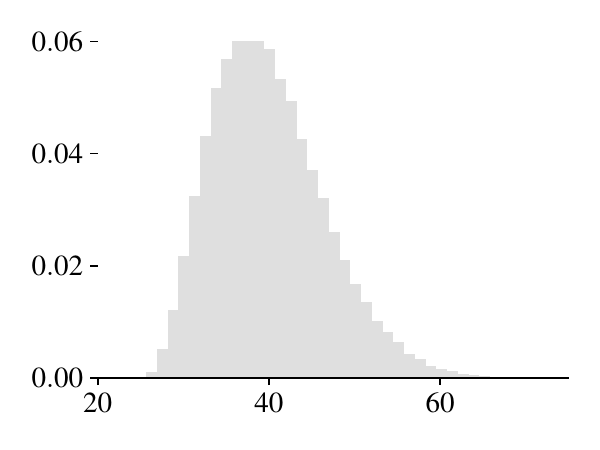}
\caption{\textup{Left panel:} Histograms of 
exposures to the market and style risk factors (i.e., 
the first three columns of the risk factor exposures matrix
$\Xi$). \textup{Right panel:} Histogram of 
asset specific volatilities (i.e., square roots of
the diagonal entries of $\Sv$).}
\label{fig:hist}
\end{figure}

\begin{figure}[htp!]
\centering
\includegraphics[width=2.75in]{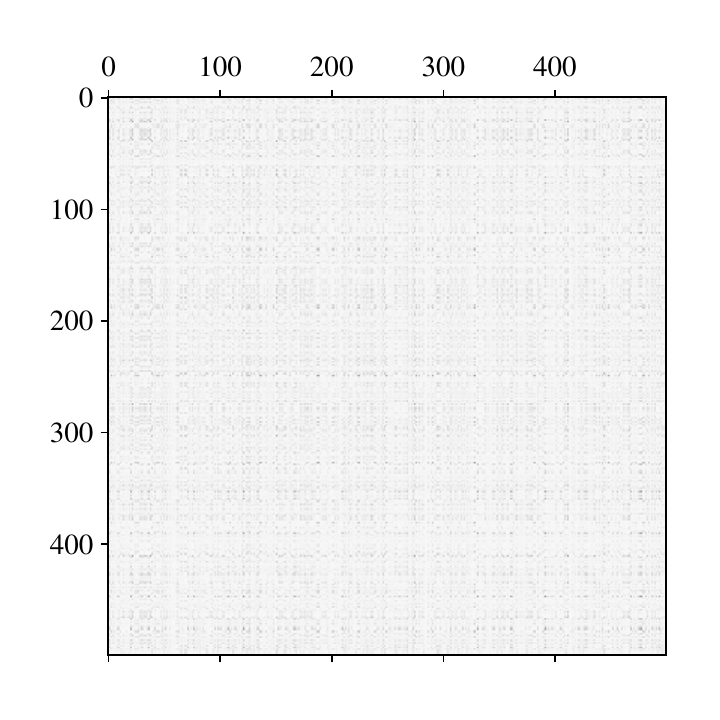}
\includegraphics[width=2.75in]{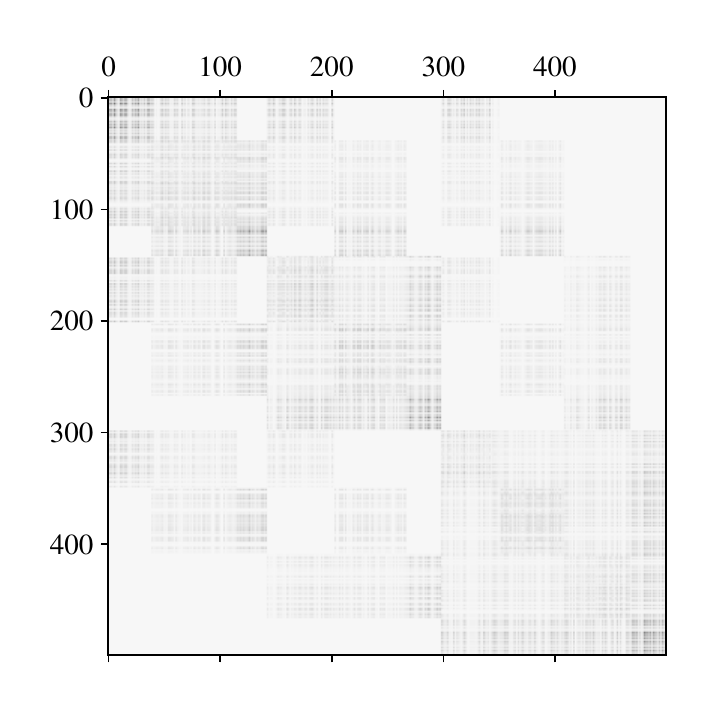}
\caption{Visualization of the asset industry memberships matrix
$\calM = \sum_{j=4}^7 \xi_j \xi^\top_j$ for $\xi_j$, the
$j$th column of the risk factor exposure matrix $\Xi$ (for a
random sample of $500$ assets). The left panel shows $\calM$ and
the right panel shows its Cuthill–McKee ordering. The
block structure corresponds to industry groupings.}
\label{fig:industry}
\end{figure}

\subsection{Population covariance model}
\label{sec:model}
Our covariance matrix calibration loosely follows the specification of 
the Barra US equity risk model (see \ci{menchero2011}
and \ci{blin2022}). To this end, we introduce a (random) vector
of factor returns $f \in \bbR^7$ and a $\pu \times 7$ exposure
matrix $\Xi$  which satisfy,
\begin{align}  \label{varf}
 \var(f) = \left( \begin{array}{ccccccc} 
$250$ & $0$ & $0$ & $55$ & $44$ & $68$ & $-22$ \\ 
$0$ & $64$ & $0$ & $0$ & $0$ & $0$ & $0$ \\ 
$0$ & $0$ & $16$ & $0$ & $0$ & $0$ & $0$ \\ 
$55$ & $0$ & $0$ & $481$ & $192$ & $-108$ & $0$ \\ 
$44$ & $0$ & $0$ & $192$ & $260$ & $-8$ & $22$ \\ 
$68$ & $0$ & $0$ & $-108$ & $-8$ & $160$ & $-44$ \\ 
$-22$ & $0$ & $0$ & $0$ & $22$ & $-44$ & $121$ \\ 
\end{array} \right)  \s , \s[16] Bx = \Xi f \s ,
 \end{align}
with $B x$ in $\req{model}$ such that $f = A x$ and $\var(f) =
AA^\top$.  The factor returns  $f$ are Gaussian with mean-zero
and covariance in $\req{varf}$. The unit are chosen so that
the factor volatilities (square roots of the diagonal of 
$\var(f)$) are in units of annualized percent. The columns of $\Xi$ are exposures
to ($q = 7$) fundamental risk factors (market risk, two style
risk factors and fours industry risk factors), and are
generated as follows.
\begin{enumerate}[itemsep=0.04in, leftmargin=0.32in]
\item[--] The entries of the first column (exposures to market risk) of 
$\Xi$  are drawn as i.i.d. normal with mean $1.0$ and standard 
deviation $0.25$.
The second and third columns of $\Xi$
(style risk factors) have i.i.d. entries  that are normal 
with mean zero and standard deviations $0.5$ and $1.0$
respectively for those columns.
\item[--] The last four columns of $\Xi$ are initialized to
be zero and for each row $i$, independently of all other rows, we select two industries
$I_1$ and $I_2$ from $\{1, 2, 3, 4\}$ uniformly at
random and without replacement. Then, drawing $U_1$ and $U_2$
that are independent and uniform in $(0,1)$, we set 
$\Xi_{iI_1} = U_1$ and 
$\Xi_{iI_2} = \Xi_{i I_1} + U_2$.
\end{enumerate}
The left panel of Figure \ref{fig:hist} contains histograms of
the first three columns of $\Xi$.  This calibration of market
and style risk factors is similar to that in \ci{goldberg2020},
who do not consider industry risk, and compare the estimators
$\scrH$ and $\scrH_\flat$ in simulation.  The entries of the
last four columns, which correspond to industry risk factors,
have the following interpretation. Each asset chooses two
industries for membership with an exposure of $0.5$ to each on
average.  When the chosen industries are the same, that exposure
is $1.0$ on average (i.e., $U_1 + U_2$).
Figure~\ref{fig:industry} supplies a visual illustration of the
structure of these industry memberships.  The industry risk
factors drive the poor performance of the estimator
$\scrH_\flat$ in our simulations due to the nonzero projection
that the corresponding four columns of $\Xi$ have in $\col{z}$.
The latter translates to components of the optimization bias
vector $\scrE_p(\scrH_\flat)$ that materially deviate from zero,
and the first that is suboptimally corrected.

The asset specific return $\ep \in \bbR^{\pu}$ in $\req{model}$
are drawn from a mean-zero  Gaussian distribution with a
diagonal covariance matrix $\var(\ep) = \Sv$. We take $\Sv_{ii}
= \sv_i^2$, for asset specific volatilities $\sv_i$, drawn as
independent copies of  $25 + 75 \times Z$ where $Z$ is a
$\text{Beta}(4,16)$ distributed random variable. These are
quoted in annualized percent units, and we refer the reader to
\ci{clarke2011} for typical values that are estimated in
practice.  Lastly, the expected return vector $\alpha \in
\bbR^{\pu}$ in $\req{model}$ is taken as $\alpha = \Xi \sigma_f$
for $\sigma_f = \sqrt{\diag (\var(f))} \approx (15.81,
8, 4, 21.93, 16.12, 12.65, 11)$.


\begin{appendix}
\section{Proofs for Section \ref{sec:qob}}
\label{app:qob}

By direct computation based on the definition in $\req{eH}$ we obtain,
\begin{align} \label{zzH}
\ip{z}{z_H} 
= \ip{H^\top z}{(H^\top H)^{-1} H^\top z}
= \ip{HH^\dagger z}{  H H^\dagger z} = |z_H|^2 
\end{align}
and, recalling that $z = \frac{\zv}{|\zv|}$, the above yields the 
following useful identities.
\begin{align} \label{zmzH}
|z - z_H|^2 = 1 -  |z_H|^2 = \ip{z}{z - z_H} = 1 - |\zv_H|^2/|\zv|^2
\end{align}

The right side is bounded away from zero in $p$ under Assumption
\ref{asm:eH}.  Throughout, we regard $\hat{\sv}$ as a sequence
in $p$ that is bounded in $(0,\infty)$.  We also introduce an
auxiliary sequence $\rv = \rv_p \upto \infty$ to generalize the
rates in Assumptions~\ref{asm:afm} and \ref{asm:eH} so that both
$B^\top B /\rv_p$ and $H^\top H /\rv_p$ converge to invertible
$q \times q$ matrices.

We begin by expanding on some of the calculations
in Section~\ref{sec:motive} and Section~\ref{sec:qob}.
Starting with $\req{hQx}$, the maximizer of $\hat{Q}(\s \cdot
\s)$ is easily calculated as $\hx = \cl \hSig^{-1} \zv$, and
\begin{align*} 
\s[32]  \max_{x \in \bbR^p} \hat{Q}(x) 
&= \cc + \cl^2 \s \ip{\zv}{\hSig^{-1} \zv}
 - \frac{\cl^2}{2} \s \ip{\zv}{\hSig^{-1} \zv} 
\\&= \cc + \frac{\cl^2 \hn^2_p}{2}
\end{align*}
justifying the expression for $\hat{Q}(\hx)$ below
$\req{hQx}$  with $\hn_p^2 = \ip{\zv}{\hSig^{-1} \zv}$
as well as $\req{conv}$.

Define $\hw =  \hSig^{-1} \zv\s / \hn_p^2  = \hx \s / (\cl \hn_p^2)$ 
and set $\tru^2_p = \ip{\hw}{\bSig \hw}$ per $\req{tru}$. Then,
\begin{align*}
{Q}(\hx) 
&= \cc + \cl^2 \s \hn_p^2 -
\frac{1}{2}\ip{\hx}{\bSig \hx}  
\\&= \cc + \frac{\cl^2 \s \hn^2_p}{2} \Big( 2 
 -  \frac{\ip{\hx}{\bSig \hx}}{ \cl^2 \hn^2_p }\Big) \\
&= \cc + \frac{\cl^2\s \hn^2_p}{2} \s \big(
 2 -  \hn^2_p \tru_p^2 \big)
\end{align*}
which is identical to $\req{QhxD}$ with 
$\mf_p =  2 -\hn^2_p \s \tru_p^2$.

Lastly, we recognize $\hw$ as the (unique) solution of $\req{minvar}$. 
The following provides a useful decomposition of 
these solutions. 

\begin{lemma} \label{lem:hw}
Suppose $H = H_{p \times q}$ has $\limp H^\top H /\rv_p$ as an
invertible $q \times q$ matrix.  Then, for vectors $v \in
\bbR^p$ with $\sup_p |v| < \infty$, the minimizer $\hw$ of
$\req{minvar}$ has
\begin{equation} \label{hwu}
    \hw 
= \frac{\zv-\zv_H}{\ip{\zv}{\zv - \zv_H}} 
+ \frac{v}{|\zv| \rv_p} 
= \frac{1}{|\zv|} \Big( \frac{z-z_H}{\s |z - z_H|^2} 
+ \frac{v}{  \rv_p }  \Big)\s .
\end{equation}
\end{lemma}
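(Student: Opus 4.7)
The plan is to compute $\hw$ directly from the closed-form expression $\hw = \hSig^{-1}\zv / (\zv^\top \hSig^{-1} \zv)$ derived below $\req{minvar}$, by expanding $\hSig^{-1}$ via the Woodbury identity and Taylor expanding in powers of $1/\rv_p$. The leading order term in both numerator and denominator will be shown to reproduce the first summand in $\req{hwu}$, while the next-order corrections will collectively contribute a remainder of order $1/(|\zv| \rv_p)$, which we absorb into $v / (|\zv| \rv_p)$.

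The key identity is the Woodbury expansion
\begin{align*}
\hSig^{-1} = \hat{\sv}^{-2}\bigl[\Id - H (\hat{\sv}^2 \Id_q + H^\top H)^{-1} H^\top\bigr] .
\end{align*}
Using the resolvent identity $(\hat{\sv}^2 \Id_q + H^\top H)^{-1} = (H^\top H)^{-1} - \hat{\sv}^2 (H^\top H)^{-1}(\hat{\sv}^2 \Id_q + H^\top H)^{-1}$, I separate the dominant projection $H(H^\top H)^{-1} H^\top = H H^\dagger$ from an $O(1/\rv_p)$ correction, yielding
\begin{align*}
\hSig^{-1} \zv = \hat{\sv}^{-2}(\zv - \zv_H) + w, \qquad w = H (H^\top H)^{-1}(\hat{\sv}^2 \Id_q + H^\top H)^{-1} H^\top \zv .
\end{align*}
The hypothesis that $H^\top H / \rv_p$ tends to an invertible $q \times q$ matrix yields $|H| = O(\sqrt{\rv_p})$, $|(H^\top H)^{-1}| = O(1/\rv_p)$, and $|(\hat{\sv}^2 \Id_q + H^\top H)^{-1}| = O(1/\rv_p)$; combining these with $|H^\top \zv| \le |H|\,|\zv|$ gives $|w| \le c\s|\zv| / \rv_p$ for some constant $c$ independent of $p$ (and similarly $\hat{\sv}$ is bounded away from $0$ and $\infty$).

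Taking the inner product with $\zv$ gives the denominator $\zv^\top \hSig^{-1}\zv = \hat{\sv}^{-2} \ip{\zv}{\zv - \zv_H} + \ip{\zv}{w}$. By $\req{zmzH}$, $\ip{\zv}{\zv - \zv_H} = |\zv|^2 \s |z - z_H|^2$, and Assumption \ref{asm:eH} yields a constant $c_0 > 0$ with $|z - z_H|^2 \ge c_0$ eventually, so $\ip{\zv}{\zv - \zv_H} \ge c_0 |\zv|^2$. Meanwhile $|\ip{\zv}{w}| \le |\zv|\s|w| = O(|\zv|^2 / \rv_p)$. Writing the denominator as $a(1 + b/a)$ with $a = \hat{\sv}^{-2}\ip{\zv}{\zv - \zv_H}$ and $b/a = O(1/\rv_p)$, I expand $1/(a+b) = a^{-1}(1 - b/a + O(1/\rv_p^2))$ and multiply by the numerator. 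Each of the resulting correction terms $\hat{\sv}^2 w / a$ and $-\hat{\sv}^2(\zv - \zv_H)\s b / a^2$ is of size $O(1/(|\zv|\rv_p))$ (the first from $|w|/|\zv|^2 \cdot |\zv| = O(1/(|\zv| \rv_p))$, the second from $|\zv|/|\zv|^2 \cdot 1/\rv_p$), so their sum is precisely $v/(|\zv|\rv_p)$ with $\sup_p |v| < \infty$. This proves the first equality of $\req{hwu}$; the second is a direct algebraic rewriting using $\zv = |\zv| z$ and the relation $\ip{\zv}{\zv - \zv_H} = |\zv|^2 |z - z_H|^2$ from $\req{zmzH}$.

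The main bookkeeping obstacle is ensuring that the compounded correction stays at order $1/(|\zv|\rv_p)$ rather than the weaker $1/\rv_p$: this requires using the lower bound $\ip{\zv}{\zv - \zv_H} \ge c_0 |\zv|^2$ in every denominator appearing after the expansion, and appealing to boundedness of $\hat{\sv}$ to prevent it from entering the rate. Provided this is tracked, the argument is otherwise a routine perturbation computation.
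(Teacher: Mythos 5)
Your proposal is correct and follows essentially the same route as the paper: the Woodbury identity to invert $\hSig$, isolation of the dominant projection $HH^\dagger$, and a controlled perturbation expansion with the lower bound $\ip{\zv}{\zv-\zv_H}\ge c_0|\zv|^2$ from Assumption~\ref{asm:eH} used to keep the remainder at order $1/(|\zv|\rv_p)$. The only cosmetic difference is that the paper extracts $HH^\dagger$ via the singular value decomposition of $H$ and the scalar identity $d^2/(\hat{\sv}^2+d^2)=1-\hat{\sv}^2/(\hat{\sv}^2+d^2)$, while you use the matrix resolvent identity; these are equivalent manipulations.
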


\begin{proof}
 We begin with an expression of $\hSig^{-1}$ 
via the Woodbury identity. 
\begin{align*}
        \hSig^{-1} = \frac{1}{\hat{\sv}^{2}}\s
   \Big(\Id_p -  \hat{\sv}^{-2} H \big(\Id_q  + \hat{\sv}^{-2}
 H^{\top}H \big)^{-1} H^{\top}  \Big)
\end{align*}
Next, consider the singular value decomposition $H = \scrH
\qeig_p \scrU^\top$ where $\scrH \in \bbR^{p\times q}$
and  $\scrU \in
\mathbb{R}^{q\times q}$ have orthonormal columns and $\qeig_p \in
\mathbb{R}^{q\times q}$ is diagonal.
Then,
\begin{align}
    \hSig^{-1} 
&=\frac{1}{\hat{\sv}^2} \s \big( 
  \Id_p - \scrH \qeig_p \s \big(\hat{\sv}^2 \Id_q +\qeig^2_p\big)^{-1}
  \qeig_p \scrH^{\top} \big)   \notag
\\&=\frac{1}{\hat{\sv}^2} \s \big( 
  \Id_p - \scrH\scrH^{\top} + \hat{\sv}^2\scrH 
  \big(\hat{\sv}^2 \Id_q +\qeig_p^2\big)^{-1}\scrH^{\top} \big)
  \label{eq:portcalc1}
\end{align}
where at the last step we utilized that
$\frac{d^2}{\hat{\sv}^2 + d^2} = 1 - 
\frac{\hat{\sv}^2}{\hat{\sv}^2 + d^2}$ 
and that $\qeig_p$ is diagonal. 

Starting with the expression $\hw = \hSig^{-1} \zv \s /\s \hn_p^2$, 
we define
\begin{align} \label{Cp}
C_p =\hat{\sv}^2 \s  \scrH
( \hat{\sv}^2 \Id + \qeig_p^2)^{-1}\scrH^{\top} z  \s ,
\end{align}
substitute $\req{eq:portcalc1}$ and use that
$\frac{1}{1 + \delta} = 1 - \frac{\delta}{1 + \delta}$
and $\zv_H = HH^\dagger \zv = \scrH \scrH^\top \zv$, to obtain
\begin{align*}
    \hw  
   &= \frac{\zv-\zv_H + |\zv| \s C_p}
   {\ip{\zv}{\zv-\zv_H} + |\zv| \ip{\zv}{C_p} } 
   = \Big( \frac{\zv-\zv_H}{ \ip{\zv}{\zv-\zv_H} } + 
  \frac{|\zv| \s C_p}{\ip{\zv}{\zv-\zv_H} }  \Big) \Big(
   1 - \frac{\delta_p}{1 + \delta_p} \Big) 
\end{align*}
for $\delta_p = 
\frac{|\zv| \ip{\zv}{C_p}}{  \ip{\zv}{\zv-\zv_H}}
= \frac{\ip{z}{C_p}}{|z -z_H|^2} $
per $\req{zmzH}$.
This identifies $v$ in $\req{hwu}$ via the relation,
\begin{align*}  
\frac{v}{|\zv| \rv_p}  &= 
\frac{|\zv| \s C_p}{\ip{\zv}{\zv-\zv_H} }  \Big(
 1 - \frac{\delta_p}{1 + \delta_p}  \Big)
- \frac{\zv-\zv_H}{ \ip{\zv}{\zv-\zv_H} }
\Big(\frac{\delta_p}{1 + \delta_p}  \Big)\\
 &= \frac{|\zv| \s C_p}{\ip{\zv}{\zv-\zv_H} }  \Big(
 \frac{1}{1 + \delta_p}  \Big)
- \frac{\zv-\zv_H}{ \ip{\zv}{\zv-\zv_H} }
\Big(\frac{\delta_p}{1 + \delta_p}  \Big)
\\ &= \frac{|\zv| C_p - (\zv-\zv_H)\s \delta_p}{
(1 + \delta_p)\ip{\zv}{\zv-\zv_H} }  
\\&= \frac{1}{|\zv|} 
\bigg( \frac{C_p  -  (z-z_H)\s \delta_p   }{
(1 + \delta_p) \s |z-z_H|^2) }   \bigg)
\end{align*}

Because $0 \le \delta_p  \le |C_p|\s/ |z - z_H|^2$, to conclude
the proof, it now suffices to show that $|C_p|$ is $O(1/\rv_p)$
so that also $\sup_p |v| < \infty$ as required. We have,
    \begin{align}
       |C_p| &\leq \hat{\sv}^2 
|\scrH (\hat{\sv}^2 \Id +\qeig_p^2)^{-1}\scrH^{\top}|  \notag \\ 
        &= \max_{j \le q} \s 
  \frac{\hat{\sv}^2  }{(\hat{\sv}^2 \Id +\qeig^2_p)_{jj}} \notag\\
   &\leq \max_{j \le q} \s \hat{\sv}^2  / (\qeig_p^2)_{jj} \notag
\\&= \hat{\sv}^2 \s |\scrU \qeig_p^{-2} \scrU^\top | \notag
\\& = (\hat{\sv}^2 /\rv_p) \s |(H^{\top}H)^{-1} \rv_p| \s . 
\label{eq: portcalc2}
\end{align}

Since the spectral norm $| \s \cdot \s |$ and the inverse of a matrix 
over invertible matrices are continuous functions, our 
assumption on $\limp H^\top H/\rv_p$
implies that $\rv_p\s |(H^{\top}H)^{-1} |$ converges to a finite
number. This, together with $\req{eq: portcalc2}$ completes the
proof.  \end{proof}

\begin{proof}[Proof of Theorem \ref{thm:discrepancy}]
Continuing from the expression $\mf_p =  2 -\hn^2_p \s
\tru_p^2$, we first address the asymptotics of $\tru_p^2
= \ip{\hw}{\bSig\hw}$, and  
using $\req{bSig}$ this yields,
\begin{equation}
     \scrV_p^2 
  = |B^{\top} \hw|^2 + \ip{\hw}{\Sv \hw}. 
 \label{V2}
\end{equation}
Applying $\req{hwu}$ of Lemma \ref{lem:hw} and the positive definiteness
of $\Sv$ per Assumption \ref{asm:afm}, 
\begin{align} \label{wGw}
   0 \leq \ip{\hw}{\Sv \hw} 
= \frac{1}{|\zv|^2} \Big(
\frac{\ip{u_H}{\Sv u_H}}{ |z - z_H|^2} 
+ \frac{2\ip{v}{\Sv u_H} }{\rv_p |z - z_H|}
+ \frac{\ip{v}{\Sv v}}{ \rv_p^2} \Big)
\end{align}
where $u_H = \frac{z-z_H}{|z-z_H|}$ and $\sup_p |v| < \infty$.
Turning our attention to the first term in $\req{V2}$ by letting 
$\hw_H = 
\frac{\zv-\zv_H}{\ip{\zv}{\zv-\zv_H}}
=\frac{1}{|\zv|} \frac{z- z_H}{|z - z_H|^2}$,
we again apply Lemma \ref{lem:hw} to deduce that
\begin{align*}
   |B^{\top} \hw|^2
    &=\Big|B^\top \big(\hw_H  + \frac{v}{|\zv| \rv_p}\big)\Big|^2
    = |B^\top \hw_H|^2 + 
2 \frac{\ip{B^\top \hw_H}{B^\top v}}{|\zv| \rv_p}  
+ \bigg( \frac{|B^\top v|}{|\zv| \rv_p} \bigg)^2 . 
\end{align*}
Since $\scrB^\top \hw_H = \frac{\scrE_p(H)}{|\zv| |z-z_H|}$ per
$\req{optbias}$, the decomposition $BB^\top = \scrB \Lambda^2_p
\scrB^\top$, yields
\begin{align}
   |B^{\top} \hw|^2
&= \frac{|\Lambda_p \s \scrE_p(H) |^2}{|\zv|^2 |z-z_H|^2} 
+ 2 \frac{\ip{\Lambda^2_p\s \scrE_p(H)}{\scrB^\top v}}
{\rv_p \s |\zv|^2 |z - z_H| }  
+ \bigg( \frac{|\Lambda_p \scrB^\top v|}{|\zv| \rv_p} \bigg)^2 . 
\label{Bhw2}
\end{align}
Considering $\mf_p$, we examine $\hn_p^2
= \ip{\zv}{\hSig^{-1} \zv}$ for $p$ large.
Using $\req{eq:portcalc1}$ and $\req{zmzH}$,
\begin{align} \label{mhp2} 
\hn_p^2  = \ip{z}{\hSig^{-1} z} |\zv|^2 =
\bigg( \frac{|\zv|}{\hat{\sv}} \bigg)^2 \s[2]
    \Big( |z - z_H|^2 + \ip{z}{C_p} \Big)
\end{align}
where $C_p$, in  Lemma \ref{lem:hw}, was 
shown to have $\ip{z}{C_p}$ in $O(1 /\rv_p)$. 
We have $|\Lambda_p^2|$ in $O(\rv_p)$
for our modification of Assumption~\ref{asm:afm} and
assuming $|\zv|/\rv_p$ vanishes, 
\begin{align*}  
\hn_p^2 \ip{\hw}{\Sv \hw} 
&= \frac{ \ip{u_H}{\Sv u_H}}{\hat{\sv}^{2}} 
+ \frac{\ip{u_H}{\Sv u_H} \ip{z}{C_p}}{\hat{\sv}^2|z-z_H|^2 }
+ o_p(\Sv) + \frac{o_p(\Sv) \ip{z}{C_p}}{|z - z_H|^2 } 
\end{align*}
for $o_p(\Sv) = \frac{2 \ip{v}{\Sv u_H} |z-z_H|\s \rv_p 
+ \ip{v}{\Sv v}|z-z_H|^2}{\hat{\sv}^2 \rv_p^2}$
is in $O(1/\rv_p)$ as the eigenvalues of $\Sv_{p \times p}$ are bounded  
in $p$. So, the last three terms in the above display are in
$O(1/\rv_p)$.

Similarly, combining $\req{Bhw2}$ and $\req{mhp2}$, we obtain
\begin{align*}
\hn_p^2 |B^{\top} \hw|^2
&= \frac{|\Lambda_p \s \scrE_p(H) |^2}{\hat{\sv}^2} 
+ \frac{|\Lambda_p \s \scrE_p(H) |^2
\ip{z}{C_p}}{\hat{\sv}^2 |z-z_H|^2} 
+ o_p(B) + \frac{o_p(B) \ip{z}{C_p}}{|z - z_H|^2 } 
\end{align*}
where the 2nd term is in $O(|\scrE_p(H)|^2)$
and $o_p(B)$ is in $O(|\scrE_p(H)| + 1/\rv_p)$ as
\begin{align}
o_p(B) &=
\frac{2 \ip{\Lambda^2_p\s \scrE_p(H)}{\scrB^\top v}
|z-z_H|}{\rv_p \s \hat{\sv}^2 }  
+ \frac{1}{\rv_p} \bigg( \frac{|\Lambda_p \scrB^\top v| |z-z_H|}
{ \hat{\sv} \sqrt{\rv_p}} \bigg)^2 
\end{align}
where we note that $|\scrB|$ and $|v|$ are bounded in $p$.
The claim now follows.
\end{proof}

\section{Proofs for Section \ref{sec:pca}}
\label{app:pca}

Essential for our proofs is
Weyl's inequality for eigenvalue perturbations of a matrix
\citep{weyl1912}. In particular, for symmetric 
$m \times m$ matrices $A$ and $N$,
\[ \max_j  |\alpha_j - \alpha_j'| \le |N| \]
where $\alpha_j$ and $\alpha_j'$ denote the
$j$th largest eigenvalues of $A$ and $A + N$ respectively.

Define $\Lp = \Jc \Y^\top \Y\s \Jc \in \bbR^{n \times n}$ with $\Jc
= \Id - \frac{\s[2] \cn \cn^\top}{|\cn|^2}$ in 
$\req{Jc}$ and $\Y$ in $\req{data}$.
\begin{align} \label{Lp}
    \Lp = \Jc \X B^\top B \X^\top \Jc
  +  \Jc \E^\top \E  \Jc
  + \Jc \X  B^\top \E  \Jc
  + (\Jc \X  B^\top \E \Jc)^\top
\end{align}


By Assumption \ref{asm:afm}\ref{afm:BB} the following
$q \times q$ limit matrix exists, with the right side 
the eigenvalue decomposition with $q \times q$ orthogonal $\scrW$
and invertible, diagonal $\Lambda$.
\begin{align} \label{BBlim}
\limp \frac{B^\top B }{p}   = 
\scrW \Lambda^2 \scrW^\top
\end{align}

For $\sv^2$ of Assumption \ref{asm:asymp}\ref{asm:EE},
define the $n \times q$ matrix $\Xv$ and $n \times n$ 
matrix $L$ as,
\begin{align} \label{Mc}
L =  \Xv \Xv^\top +  \sv^2 \s \Jc  \s , 
\s[32]  \Xv = \Jc \X \scrW \Lambda \s  .
\end{align}

Let $\lambda^2_{j,n}(\Xv)$ denote the $j$th largest eigenvalue
of $\Xv \Xv^\top$  (also, $\Xv^\top \Xv$ for $j \le q$) associated with the $j$th column of
$\nu_{n\times n}(M)$, the eigenvectors of $\Xv \Xv^\top$. 
By Assumption
\ref{asm:asymp}\ref{asm:XX}, we have $\lambda^2_{j,n}(M) > 0$
for $j \le q$ and $\lambda^2_{j,n}(M) = 0$ otherwise. 

\begin{lemma}\label{lem:Sp}
Under Assumptions \ref{asm:afm} \& \ref{asm:asymp},
almost surely, $\limp W_p / p = L$ and,
\begin{align*}
 \limp \seig^2_{j,p} / p
= \left\{ \begin{array}{cc}
 (\lambda^2_{j,n}(M) + \sv^2) \s /n & 1 \le j < n \s , \\  
 \sv^2 /n & j = n, \s \Jc = \Id \s , \\ 
 0 &  \text{otherwise.}
\end{array} \right.
\end{align*}
\end{lemma}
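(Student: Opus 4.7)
The plan is to first establish the matrix-level limit $W_p/p \to L$ almost surely, and then deduce the eigenvalue limits via Weyl's inequality and a direct spectral decomposition of $L$. Both $W_p$ and $L$ lie in the fixed-dimensional space $\bbR^{n \times n}$, so all matrix norms are equivalent and convergence may be checked in any convenient way.

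I would divide the expansion $\req{Lp}$ of $W_p$ by $p$ and treat its four terms in turn. The eigendecomposition $\req{BBlim}$ gives $B^\top B/p \to \scrW \Lambda^2 \scrW^\top$, which after conjugation by the (random, $p$-free) matrix $\Jc \X$ yields $\Jc \X (B^\top B/p) \X^\top \Jc \to \Xv \Xv^\top$ almost surely. Assumption \ref{asm:asymp}\ref{asm:EE} gives $\E^\top \E/p \to \sv^2 \Id$, so sandwiching by $\Jc$ and using $\Jc^2 = \Jc$ produces $\Jc \E^\top \E \Jc/p \to \sv^2 \Jc$. For the cross term $\Jc \X B^\top \E \Jc/p$, Assumption \ref{asm:asymp}\ref{asm:EB} supplies $\|\Jc \E^\top B\|/p \to 0$ almost surely, and left-multiplying by $\X^\top$, a matrix independent of $p$, preserves this vanishing; the transposed cross term follows by symmetry. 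Summing gives $W_p/p \to L$ almost surely.

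Next, the nonzero eigenvalues of $\Sam = \Y \Jc \Y^\top/n$ coincide with the nonzero eigenvalues of $\Jc \Y^\top \Y \Jc/n = W_p/n$, since $\Jc$ is symmetric and idempotent. Therefore for each $1 \le j \le n$ the sample eigenvalue $\seig^2_{j,p}/p$ equals the $j$th largest eigenvalue of $W_p/p$ divided by $n$, and Weyl's inequality applied to $W_p/p \to L$ yields almost sure convergence to the $j$th eigenvalue of $L$ divided by $n$. To identify the spectrum of $L = \Xv \Xv^\top + \sv^2 \Jc$ I would observe that $\col{\Xv} \subseteq \mathrm{range}(\Jc)$ since $\Xv = \Jc \X \scrW \Lambda$, and that Assumption \ref{asm:asymp}\ref{asm:XX} makes $\Xv^\top \Xv = \Lambda \scrW^\top (\X^\top \Jc \X) \scrW \Lambda$ almost surely invertible, so $\Xv \Xv^\top$ has rank $q$ with positive eigenvalues $\lambda^2_{1,n}(M) \ge \cdots \ge \lambda^2_{q,n}(M)$. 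On $\col{\Xv}$, $\Jc$ acts as the identity, contributing the eigenvalues $\lambda^2_{j,n}(M) + \sv^2$ for $j \le q$; on $\mathrm{range}(\Jc) \cap \col{\Xv}^\perp$, of dimension $\dim(\mathrm{range}(\Jc)) - q$, the operator $L$ reduces to $\sv^2 \Id$; on $\ker(\Jc)$, trivial when $\Jc = \Id_n$ and one-dimensional otherwise, $L$ vanishes.

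A case split on $\Jc = \Id$ versus $\Jc \neq \Id$, together with the convention $\lambda^2_{j,n}(M) = 0$ for $j > q$, then reproduces the three-line formula after dividing by $n$: when $\Jc = \Id$ the index $j = n$ falls in $\mathrm{range}(\Jc) \cap \col{\Xv}^\perp$ and contributes $\sv^2/n$, while when $\Jc \neq \Id$ the index $j = n$ is supplied by the one-dimensional $\ker(\Jc)$ and equals zero. The main obstacle is this final bookkeeping and matching the ordering of eigenvalues to the stated cases; the analytical content is settled by the Weyl perturbation step and by the three almost sure limits supplied by Assumption \ref{asm:asymp}\ref{asm:EE}--\ref{asm:EB} in fixed finite dimension.
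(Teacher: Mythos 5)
Your argument is correct and matches the paper's proof step for step: you pass to the limit $W_p/p \to L$ term by term in the expansion $\req{Lp}$ using Assumptions \ref{asm:afm}\ref{afm:BB} and \ref{asm:asymp}\ref{asm:EE}--\ref{asm:EB}, invoke Weyl's inequality for the eigenvalue convergence, and then read off the spectrum of $L = \Xv\Xv^\top + \sv^2\Jc$ by splitting $\bbR^n$ according to $\col{\Xv}$, $\mathrm{range}(\Jc)\cap\col{\Xv}^\perp$, and $\ker(\Jc)$. The only (cosmetic) difference is that you make the orthogonal decomposition of $\bbR^n$ explicit where the paper simply notes $L\cn = 0$ and $\Jc v = v$ for the remaining eigenvectors, but the substance is identical.
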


As relevant for Assumption \ref{asm:asymp}\ref{asm:qnum}, above
implies $n_+$ (see $\req{snr-bulk}$) almost surely converges to
$n$ whenever $\Jc = \Id$ and $n_+$ converges to $n-1$ otherwise,
as $p \upto \infty$.

\vspace{0.16in}

\begin{proof}
We address the convergence of $\Lp/p$ as follows.  The sum of
the first and second terms in $\req{Lp}$ (scaled by $1/p$)
converge to $L$ due to Assumption \ref{asm:afm}\ref{afm:BB} and
Assumption \ref{asm:asymp}\ref{asm:EE}.  The last two terms in
$\req{Lp}$ (scaled by $1/p$) vanish by
Assumption~\ref{asm:asymp}\ref{asm:EB}. Since the nonzero
eigenvalues of $W_p$ are those of $\Y \Jc \Jc^\top \Y^\top = \Y
\Jc \Y^\top$, almost surely, $\seig^2_{j,p} / p$ converges to
the $j$th eigenvalue of $L$ by Weyl's inequality.

It remains to find the eigenvalues of $L$.  For $\Jc = \Id$, it
is easy to check that the eigenvalues of $L$ are just
$\lambda^2_{j,n}(\Xv) + \sv^2$ for $j = 1, \dots, n$ with
eigenvectors $\nu_{n \times n}(M)$.  When $\Jc \neq \Id$, we
have $L \cn = 0$ so that for any other eigenvector $v$ of $L$,
we have $\ip{v}{\cn} = 0$ and consequently $\Jc v = v$. It
follows when $\Jc \neq \Id$, the eigenvalues of $L$ are given by
$\lambda^2_{j,n}(\Xv) + \sv^2$ for  $j  < n$ and zero otherwise.
This concludes the proof.
\end{proof}

\begin{proof}[Proof of Theorem \ref{thm:pca}]
Taking \ref{pca:kappa} first, $\kappa_p^2$
in $\req{snr-bulk}$ is the average of 
$\seig^2_{j,p}$ for $q + 1 \le j \le  n_+$. By Lemma
\ref{lem:Sp}, for such $j$ we have $\seig^2_{j,p}/p
\to \sv^2 / n$ (i.e., $\lambda_{j,n}(\Xv) = 0$ for $j > q$).
Therefore, $\kappa_p^2/p \to \sv^2/n$ almost surely 
and part $\ref{pca:kappa}$ holds. 

Turning to part \ref{pca:Sp} we have $(\Psi \qeig_p)^2 =
\qeig_p^2 - \kappa^2_p \s \Id$ for $\Psi^2$ in $\req{snr-bulk}$.
By part \ref{pca:kappa} and Lemma \ref{lem:Sp}, $(\Psi
\qeig_p)^2_{jj}/p \to \lambda^2_{j,n}(\Xv)/n$ for $j \le q$.
For $K^2_p$ in \ref{pca:Sp}, since $(\np) (K^2_p)_{jj}$ is the $j$th largest eigenvalue of
$B\s \X^\top\Jc \X B^\top\s[-1] / p$ and equals  that of $\Jc \X
B^\top B \X\top \Jc\s / \s[-1] p$. The latter $n \times n$
matrix converges to $\Xv \Xv^\top$ by Assumption
\ref{asm:afm}\ref{afm:BB} and now, by Weyl's inequality, $ (\np)
(K^2_p)_{jj} \to \lambda^2_{j,n}(M)$ almost surely.  
Dividing by $n$ finishes the proof.

Henceforth, and in view of the above, we work
with the assumption that $\Y$ has rank $n_+ > q$ since for any
outcome there is a $p$ sufficiently large to ensure this.

For part \ref{pca:HBBH},  let $\Lw =\scrU \qeig_p^{-1} \sqrt{p/n}  $ 
where $\scrU$ is the $n \times q$ matrix of right singular
vectors of $\Y \Jc$ corresponding to its left singular 
vectors $\scrH = \nu_{p \times q}(\Y \Jc)$. We have,
\begin{align} \label{JcW}
 \Jc \Lw = \Lw \s, \s[32] 
 \Lw^\top \Lp \Lw =  p\s \Id
\end{align}
where the first identity is due to $\cn$ being a right singular vector 
of $\Y \Jc$ with value zero (i.e., $\scrU^\top g = 0_q$). 
The second identity comes from the singular value decomposition
which implies that 
$\Y\Jc \scrU /\s[-1] \sqrt{n} = \scrH \qeig_p$. The latter
further yields that,
\begin{align} \label{Hkey}
    \scrH =  \frac{\Y \Jc  \calW}{\sqrt{p}} 
=  \frac{\Y  \calW}{\sqrt{p}} 
    = \frac{1}{\sqrt{p}} B \X^\top  \Lw
    +\frac{1}{\sqrt{p}} \E  \Lw .
\end{align} 
Multiplying this by $BB^\dagger$ yields
that for $Z_p = B \X^\top  +  BB^{\dagger} \E$,
\begin{align} \label{BBH}
  BB^{\dagger}\scrH =  
   \frac{1}{\sqrt{p}} Z_p \Lw \s ,
    \s[32] 
\end{align}
and we expand on $Z_p Z_p^\top$ to obtain
(using that  $(BB^\dagger)^\top B = BB^\dagger B = B$),
\begin{align*}
 Z_p Z_p^\top 
  &=  \X B^\top B \X^\top + \E^\top B \X^\top + \X B^\top \E 
  + \E^\top BB^\dagger \E \\
  &=  \Y^\top \Y - \E^\top \E
  + \E^\top BB^\dagger \E
\end{align*}
Since the matrix $B$ is full rank,  $B^\dagger B = \Id$ 
and also $BB^\dagger = \scrB\scrB^\top$. Therefore,
\begin{align}
    \scrH^{\top}\scrB\scrB^{\top}\scrH 
  = \scrH^\top B B^{\dagger} \scrH 
  = \scrH^{\top} BB^{\dagger} BB^{\dagger} \scrH
  = (BB^{\dagger} \scrH)^{\top} (BB^{\dagger} \scrH) 
  \label{HBBH}
\end{align}
where at the last step we used that $BB^{\dagger}$ is symmetric. 

Combining $\req{BBH}$  and $\req{HBBH}$ with
$\req{JcW}$ with $\Lp$ in $\req{Lp}$ for which 
$\Lw^\top \Lw = \qeig_p^{-2} p/n$, adding and
subtracting $\hat{\sv}^2 \Id$ (where $\hat{\sv}^2
= n \kappa_p^2 /\s[-1] p$) and recalling that 
$\Psi = \Id_q - \kappa_p^2 \qeig_p^{-2}$,
\begin{align*}
  \scrH^\top \scrB \scrB^\top \scrH 
  & = \Lw^\top \big( \Lp - \E^\top \E 
    + \Jc \E^\top  BB^\dagger \E \Jc \big)  \Lw \s / p \\
  & = \Id_q + 
 \Lw^\top \big( \s \hat{\sv}^2 \Id - \hat{\sv}^2 \Id  
   - \E^\top \E /p \big) \Lw
    +  \Lw^\top\s[-2] \Jc \E^\top  BB^\dagger \E \Jc \Lw \s / p \\
  & = \Psi^2 + 
 \Lw^\top \big( \s \hat{\sv}^2 \Id  - \E^\top \E /p \big) \Lw 
+ \Lw^\top\s[-2] \Jc \E^\top  BB^\dagger \E\Jc\Lw \s / p \s . 
\end{align*}
From the above, we obtain the following bound.
\begin{align} \label{HBBHbound} 
| \scrH^\top \scrB \scrB^\top \scrH - \snr^2 |
\le \s |\Lw|^2\s \big(\s |\s\hat{\sv}^2 \Id  - \E^\top \E/p | 
 + | \Jc \E^\top BB^\dagger \E \Jc |\s \big)\s /p \s .
\end{align}
We have $|\scrW|^2 = |\scrU \qeig_p^{-1}|^2 \s (p/n)
\le \frac{|\scrU|^2}{\min_{j \le q} n \seig^{2}_{j,p}/p}$
and thus,
\begin{align} \label{lsupW}
\lsup |\scrW|^2 < \infty
\end{align}
almost surely
by Lemma \ref{lem:Sp} and using that $|\scrU|\le 1$.

By part \ref{pca:kappa} and Assumption
\ref{asm:asymp}\ref{asm:EE}, we also have that almost surely,
\begin{align} \label{Ink}
   \limp | \s \hat{\sv}^2 \Id   - \E^\top \E/p |  = 0 \s .
\end{align}
Since $|BB^\dagger \E\Jc |^2 
= | (B B^\dagger \E \Jc)^\top BB^\dagger \E \Jc |
= | \Jc \E^\top BB^\dagger \E \Jc |$,
it suffices to prove that
\begin{align}
    \limp 
 |BB^{\dagger}\calE \Jc |\s /\s[-1] \sqrt{p} = 0  \label{BBE}
\end{align}
almost surely.
In that regard,  we have
\begin{align*}
| BB^{\dagger}\calE \Jc \s |^2 / p
  &=  |\s \Jc \E^\top BB^\dagger \E \Jc\s | \s /p  
  \\&= |\s \Jc \E^\top B (B^{\top}B)^{-1} B^\top \E \Jc \s | \s /p  \\
  &=  |\s (p^{-1} \s[-1] B^\top B)^{-1/2} 
   B^\top \E \Jc \s |^2 / p^2   \s .
\end{align*}
Applying  Assumption \ref{asm:afm}\ref{afm:BB}, and 
in particular $\req{BBlim}$, yields
\[  \limp | BB^{\dagger}\calE \Jc |^2 / p
    \le | \Lambda^{-2} | \s  ( \limp |B^\top \E \Jc|\s /\s[-1] p )^2 \s .
\]
Assumption \ref{asm:asymp}\ref{asm:EB} and the fact that all
matrix norms on $\bbR^{q \times n}$ are equivalent concludes the
proof of $\req{BBE}$. Part \ref{pca:HBBH} now follows by
combining $\req{HBBHbound}$--$\req{BBE}$ and observing that each
$(\Psi^2)_{jj} = 1 - \kappa_p^2 / \seig^2_{j,p}$ for $j \le q$
is eventually in $(0,1)$ due to parts \ref{pca:Sp} and
\ref{pca:kappa}.

Lastly, for part \ref{pca:HBBz}, we again use that 
$BB^\dagger = \scrB\scrB^\top$ is symmetric,
that $\Jc \Lw  = \Lw$, and 
computing $\scrH^\top z$ from $\req{Hkey}$ while 
considering $\req{BBH}$  yields that almost surely
\begin{align*}
    \limp |\scrH^\top z -  \scrH \scrB \scrB^\top z | 
    &= \limp |\scrH^\top z - (BB^\dagger \scrH)^\top  z | \\
    &= \limp \frac{1}{\sqrt{p}} \big| 
   \Lw^\top \X B^\top z + \Lw^\top \E^\top z 
    - \Lw^\top Z_p^\top  z \big| \\
    &=  \limp \frac{1}{\sqrt{p}}
   |\s \Lw^\top \Jc \calE^\top z 
    - (BB^{\dagger}\E \Jc \Lw)^\top z | \\
    &\leq \limp |\Lw^\top |\s \Big( 
    \frac{1}{\sqrt{p}} |\Jc \E^\top z |
  + \frac{1}{\sqrt{p}} | BB^{\dagger}\E \Jc |\Big) = 0 
\end{align*}
by applying $\req{lsupW}$, $\req{BBE}$ and
Assumption \ref{asm:asymp}\ref{asm:Ee}. This concludes
the proof. \end{proof}

\begin{proof}[Proof of Theorem \ref{thm:pcabias}]
We first prove the norm of the numerator $
\scrB^\top(z - z_\scrH)$ of $\scrE_p(\scrH)$ in $\req{optbias_gps}$
converges to $|\Phi \scrH^\top z|$. Using that 
$z_\scrH = \scrH \scrH^\top z$ yields,
\begin{align}
|\scrB^\top (z - z_{\scrH})|^2 
&= \ip{\scrB^\top z}{ \scrB \scrB^\top z}
- 2 \ip{ \scrB^\top z}{ \scrB^\top z_\scrH}
+ \ip{\scrB^\top z_{\scrH}}{  \scrB^\top z_{\scrH}} \notag
\\&= | \scrB^\top z|^2
- 2 \ip{ \scrH^\top \scrB \scrB^\top z}{\scrH^\top z}
+ |\scrB^\top \scrH \scrH^\top z|^2 \s . \label{numerEH2}
\end{align}

Considering the last term in $\req{numerEH2}$, we obtain
\begin{align*}
|\scrB^\top \scrH \scrH^\top z|^2 
&= \ip{\scrH^\top z}{ (\scrH^\top \scrB  
\scrB^\top \scrH - \Psi^2)\s \scrH^\top z}
+ z^\top \scrH \Psi^2 \scrH^\top z \s .
\end{align*} 
and because
$|\ip{\scrH^\top z}{ (\scrH^\top \scrB  
\scrB^\top \scrH - \Psi^2)\s \scrH^\top z}|
\le |\scrH^\top z|^2  |\scrH^\top \scrB  
\scrB^\top \scrH - \Psi^2 |$
as well as $|\scrH^\top z| \le 1$,
we have by part \ref{pca:HBBH} of Theorem \ref{thm:pca} that
\begin{align} \label{numer1}
\limp |\scrB^\top \scrH \scrH^\top z|^2 
=  \limp |\Psi \scrH^\top z|^2 \s .
\end{align}

The second term in $\req{numerEH2}$ has,
\begin{align*}
  \ip{ \scrH^\top \scrB \scrB^\top z}{\scrH^\top z}
=   \ip{ \scrH^\top \scrB \scrB^\top z  - \scrH^\top z}{\scrH^\top z}
+ |\scrH^\top z|^2
\end{align*}
so that by part \ref{pca:HBBz} of Theorem \ref{thm:pca}, we have
\begin{align} \label{numer2}
\limp  \ip{ \scrH^\top \scrB \scrB^\top z}{\scrH^\top z}
= \limp |\scrH^\top z|^2
\end{align}

For the first term in $\req{numerEH2}$, due to
 Corollary \ref{cor:inv} and $\req{Hzlen}$
in particular,
\begin{align}
\limp |\scrB^\top z|^2 
= \limp |z_B|^2 = \limp | \Psi^{-1} \scrH^\top \scrB \scrB^\top z|
\end{align}
where we used that $|z_B| = |\scrB^\top z|$.
Since $|\Psi^{-1}| < \infty$ almost surely due to
part \ref{pca:HBBH} of Theorem \ref{thm:pca}, 
applying part \ref{pca:HBBz} of the same theorem now yields,
\begin{align} \label{numer3}
\limp |\scrB^\top z|^2 =
\limp | \Psi^{-1} \scrH^\top z|^2
\end{align}

Now, we rewrite the term $|\Phi \scrH^\top z|^2$ 
by substituting $\Phi = \snr^{-1} - \snr$ as,
\begin{align*}
    |\Phi\scrH^\top z|^2 
&=  z^\top \scrH (\Psi^{-1} - \Psi)
   (\Psi^{-1} - \Psi) \scrH^\top z \\ 
&= z^{\top} \scrH \Psi^{-2} \scrH^{\top} z 
- 2 z^{\top} \scrH \scrH^{\top} z 
+ z^{\top }\scrH \Psi^2 \scrH^\top z \nonumber\\
&= |\Psi^{-1} \scrH^{\top} z|^2 
- 2\s | \scrH^{\top} z |^2 + |\Psi \scrH^\top z|^2 \nonumber
\end{align*}
which confirms that $\limp
\big( |\scrB^\top (z - z_\scrH) | - |\Phi \scrH^\top z| \big)
= 0$ almost surely, and after taking the limit of  $\req{numerEH2}$
and substituting $\req{numer1}$, $\req{numer2}$ and
$\req{numer3}$. Lastly, from $\req{numer2}$,
\begin{align}\label{eq:2.6_Hzstayslessthan1}
    \lsup |\scrH^{\top}z| 
    &\le \lsup |\scrH^\top z| |\scrH^\top \scrB| |\scrB^\top z|
\leq \lsup  |\Psi^2|  | \scrB^\top z|
< \lsup | \scrB^\top z|
\end{align}
where we used that $|\scrH^\top \scrB|^2 = |\scrH^\top \scrB
\scrB^\top \scrH|$ and part \ref{pca:HBBH} of Theorem~\ref{thm:pca} 
which shows the limit of the latter
is $|\Psi^2|$ with every $\Psi^2_{jj}$ eventually in $(0,1)$.
From this, $\lsup |\Phi \scrH^\top z|
\le |\Phi| < \infty$ and $\sqrt{1 - |\scrH^\top z|^2}$ 
(denominator of $\scrE_p(\scrH)$ in $\req{optbias_gps}$)
is eventually
in $(0,1)$ almost surely.  We  now deduce that
$|\scrE_p(\scrH)| - \frac{|\Phi \scrH^\top z|}
{\sqrt{1 - |\scrH^\top z|^2 }}$ vanishes and 
$|\scrE_p(\scrH)|$
is eventually in $[0,\infty)$ almost surely. 
Lastly $|\scrE_p(\scrH)|$ converges to zero only if $\limp
\scrH^\top z = 0$ (when $|\varphi|$ vanishes) concluding 
the proof.

\end{proof}

\section{Proofs for Section \ref{sec:Hsharp}}
\label{app:sharp}

We begin with the following auxiliary result
which requires Assumption \ref{asm:asymp}.
As usual, $\scrB = \scrB_{p \times q} = \nu_{p \times q} (B)$ 
with $B = B_{p \times q}$ satisfying
Assumption \ref{asm:eB}. 

\begin{lemma} \label{lem:BHinv} The matrix
$\scrB^\top \scrH$ is eventually invertible almost surely.
\end{lemma}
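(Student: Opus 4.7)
The plan is to derive the invertibility of the $q\times q$ matrix $\scrB^\top \scrH$ directly from Theorem~\ref{thm:pca}\ref{pca:HBBH}, which already states that
\[
\limp |\scrH^\top \scrB \scrB^\top \scrH - \Psi^2| = 0 \text{ a.s.},
\]
together with the fact that every diagonal entry $\Psi^2_{\jj}$ is eventually in $(0,1)$ almost surely. Since $\scrB^\top \scrH$ is a square matrix, it suffices to show that the Gram matrix $(\scrB^\top \scrH)^\top (\scrB^\top \scrH) = \scrH^\top \scrB \scrB^\top \scrH$ is eventually nonsingular almost surely.

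First, I would note that $\Psi^2$ is diagonal, so $\det(\Psi^2) = \prod_{j \le q} \Psi^2_{\jj}$. Because $q$ is fixed and finite, the statement that each $\Psi^2_{\jj}$ is eventually in $(0,1)$ a.s.\ lifts (via a finite intersection of almost-sure events) to the joint statement that on an event of probability one there exists a (random) $p_0$ such that $\Psi^2_{\jj}(p) \in (0,1)$ simultaneously for all $j \le q$ and all $p \ge p_0$. On this event, $\det(\Psi^2)$ is bounded away from zero along any convergent subsequence; in particular, using that $\Psi^2_{\jj} \to \lambda^2_{j,n}(\Xv)/(\lambda^2_{j,n}(\Xv) + \sv^2) > 0$ by Theorem~\ref{thm:pca}\ref{pca:Sp}--\ref{pca:kappa} and Assumption~\ref{asm:asymp}\ref{asm:XX}, one gets $\linp \det(\Psi^2) > 0$ a.s.

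Next, by continuity of the determinant on $\bbR^{q \times q}$ together with the almost sure operator-norm convergence $\scrH^\top \scrB \scrB^\top \scrH - \Psi^2 \to 0$, we deduce
\[
\linp \det(\scrH^\top \scrB \scrB^\top \scrH) = \linp \det(\Psi^2) > 0 \quad \text{a.s.}
\]
Hence $\det(\scrH^\top \scrB \scrB^\top \scrH) > 0$ eventually a.s., which means $\scrB^\top \scrH$ has full column rank $q$ and, being square, is invertible for all sufficiently large $p$ almost surely.

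I do not anticipate a serious obstacle: the result is essentially a corollary of the already-established Theorem~\ref{thm:pca}\ref{pca:HBBH}. The only minor subtlety is the passage from the pointwise-in-$j$ eventual containment $\Psi^2_{\jj} \in (0,1)$ to the uniform-in-$j$ statement needed to bound $\det(\Psi^2)$ away from zero, but this is automatic because $q$ is a fixed finite integer.
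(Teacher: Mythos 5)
Your proof is correct and follows essentially the same route as the paper: both reduce invertibility of $\scrB^\top \scrH$ to positivity of the determinant of the Gram matrix $\scrH^\top \scrB\scrB^\top\scrH$, invoke Theorem~\ref{thm:pca}\ref{pca:HBBH} together with continuity of the determinant, and then use that each $\Psi^2_{\jj}$ is eventually in $(0,1)$ almost surely. You spell out a couple of details (the finite-intersection argument over $j$ and the explicit limit value of $\Psi^2_{\jj}$) that the paper leaves implicit, but the argument is the same.
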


\begin{proof} Considering the determinant of $\scrB^\top \scrH$,
almost surely,
\begin{align*}
  \limp ( \det \s (\scrB^\top \scrH) )^2 &
   = \limp \det \s (\scrH^\top \scrB\scrB^{\top}\scrH)  \\
    & = \limp \det  (\snr^2) 
\end{align*}
where we applied Theorem \ref{thm:pca}\ref{pca:HBBH} and the
continuity of the determinant. Moreover, the diagonal matrix
$\snr^2$ has every $\snr_{\jj}$ eventually in $(0,1)$ almost
surely.  Thus,  $\det(\scrB^\top \scrH)$ is almost
surely converging to a positive limit, i.e., $\scrB^\top \scrH$ is
eventually invertible. 
\end{proof}

\begin{proof}[Proof of Theorem \ref{thm:Hsharp}]
For $\scrH_z$ in $\req{Hz}$ and 
$\zpH, \hz$ in $\req{zpH}$, define
\begin{align} \label{Hplus}
\scrH_+ = \scrH \snr + \zpH \hz^\top
= \scrH_z  T_+\s ,
\s[32] T_+ =  \Big( \begin{array}{c}
  \snr \\ \hz^\top  \end{array} \Big) 
\end{align}
where $T_+ \in \bbR^{(q + 1) \times q}$ 
was first encountered in $\req{HsharpMat}$.
We compute,
\begin{align} \label{TpTp}
 \s[32] T_+^\top T_+ = \snr^2 + \hz \hz^\top
= \Rv \s \Phi^2 \Rv^\top \s ,
\s[32]  \Rv = \nu_{q \times q}(\snr^2 + \hz\hz^\top) \s ,
\end{align}
with the eigenvalue decomposition per $\req{Phi}$.
The singular value decomposition,
\begin{align} \label{Tplus} 
\s[32] T_+ = \scrT \Phi \ran^\top \s ,  
\s[32] \scrT =  \nu_{(q +1) \times q}(T_+) 
= ( \s \tau_1 \s[4] \cdots \tau_j \cdots \s[4] \tau_q \s ) \s , 
\end{align}
has $\tau_j \in \bbR^{q + 1}$ denoting the $j$th left singular vector
with $|\tau_j| = 1$ and value $\Phi_{\jj}$. We can write
the final estimator $\scrH_\sharp$ in $\req{Hsharp}$
in the form 
$\scrH_\sharp = \scrH_z T_\sharp $  (c.f. $\req{HsharpMat}$) where
\begin{align} \label{Tsharp}
 \s[32] T_\sharp = T_+ \scrM \Phi^{-1} = \scrT \s , 
\s[32] ( \scrH_\sharp = \scrH_z \scrT
= \scrH_+ \Rv \Phi^{-1}).
\end{align}

We prove the last part first. Using $\req{Hplus}$ and
multiplying from the right by $\scrB^\top$,
\begin{align*}
\scrB^\top \scrH_+ = (\scrB^\top \scrH) \snr
+ \scrE_p(\scrH) \hz^\top 
\end{align*}
where we used that $\scrB^\top \zpH = \scrE_p(\scrH)$. Applying
Corollary \ref{cor:inv}  yields,
\begin{align} \label{limBHplus1}
\limp \scrB^\top \scrH_+ = \limp 
\big( (\scrB^\top \scrH) \snr
+ (\scrB^\top \scrH 
\snr^{-2} \scrH^\top \scrB) \scrE_p(\scrH) \hz^\top \big)  \s .
\end{align}
Using the identity $\scrB^\top \zpH = \scrE_p(\scrH)$ and applying 
Theorem \ref{thm:pca} parts \ref{pca:HBBH}--\ref{pca:HBBz}, 
\begin{align} 
 \limp \snr^{-1} (\scrH^\top \scrB) \s \scrE_p(\scrH) 
  &=  \limp \snr^{-1} \frac{ (\scrH^\top \scrB) \scrB^\top z -
 (\scrH^\top \scrB\scrB^\top \scrH) \scrH^\top z }{|z-z_\scrH|} 
 \notag  
\\&=  \limp \snr^{-1}
 \frac{ (\Id - \snr^2) \scrH^\top z }{|z-z_\scrH|}  \notag
\\&=  \limp  \frac{ \Pd \scrH^\top z }{|z-z_\scrH|}  \label{PHBEH}
\end{align}
so that $\limp \snr^{-1} (\scrH^\top \scrB) \s \scrE_p(\scrH) 
= \limp \hz$ per $\req{zpH}$ with $\Pd = \snr^{-1}
- \snr$. This justifies the nontrivial part of 
the limit statement in $\req{HzBBH}$.
Continuing from $\req{limBHplus1}$, 
\begin{align*} 
\limp \scrB^\top \scrH_+ 
= \limp (\scrB^\top \scrH) \big( \snr
+ \snr^{-1} \hz \hz^\top \big) 
= \limp (\scrB^\top \scrH) \snr^{-1} (T_+^\top T_+) \s .
\end{align*}

Combining this with $\scrB^\top \scrH_\sharp = 
\scrB^\top \scrH_+ \Rv \Phi^{-1}$ 
per $\req{Tsharp}$ and $\req{TpTp}$
leads to the relation $\limp \scrB^\top \scrH_\sharp
= \limp (\scrB^\top \scrH) \snr^{-1} \Rv \Phi$. Therefore,
\[ \limp \scrH_\sharp^\top \scrB \scrB^\top \scrH_\sharp
= \limp \Phi \Rv^\top \snr^{-1} 
(\scrH^\top \scrB \scrB^\top \scrH) 
\snr^{-1} \Rv \Phi
\]
and the right side evaluates to $\limp \Phi^2$
by Theorem \ref{thm:pca}\ref{pca:HBBH}
and  that $\Rv^\top \Rv = \Id$.

Finally, we have $\scrH^\top_\sharp \scrH_\sharp = \scrT^\top
\scrH_z^\top \scrH_z \scrT = \Id$ using $\req{Tsharp}$ and the
fact that both matrices $\scrH_z$ and $\scrT$ have orthonormal
columns.

We now move to proving that 
$\scrE_p(\scrH_\sharp) \to 0$ for
$\scrH_\sharp = \scrH_z T_\sharp$ per $\req{Tsharp}$ with,
\begin{align} \label{EHsharp}
    \scrE_p(\scrH_\sharp) 
= \frac{\scrB^\top (z - z_{\scrH_{\sharp}})}
{\sqrt{1   - |z_{\scrH_{\sharp}}|^2}}
\end{align}
replacing $\scrH$ in $\req{optbias_gps}$ with $\scrH_\sharp$
and  applying $\req{zzH}$.
We prove the desired result in two steps below. In step 1 we 
show the denominator in $\req{EHsharp}$ is bounded away from zero
eventually.
In step 2 we prove that the numerator in
$\req{EHsharp}$ converges to zero.

\vspace{0.16in}

\textsc{Step 1}. We prove $|z_{\scrH_\sharp}|  < 1$
eventually in $p$ almost surely. 
Note that,
\begin{align*} 
|z_{\scrH_z T}|^2
&= |\scrH_z T (\scrH_z T)^\dagger z|^2 
= |\scrH_z T T^\dagger \scrH_z^\top z|^2 
= z^\top \scrH_z T T^\dagger
 \scrH_z^\top \scrH_z  T 
 T^\dagger \scrH_z^\top z
\\ &= z^\top \scrH_z T
 T^\dagger \scrH_z^\top z
\end{align*}
for any element $\scrH_z T$ in the family $\req{Tfamily}$
and where we have used that
$\scrH_z^\top \scrH_z = \Id$ and that $T^\dagger T = \Id$. 
Starting with $\req{Tplus}$ and $\req{Tsharp}$, we have  the spectral
decomposition
\begin{align} \label{zHsharp}
 T_\sharp T_\sharp^\dagger = \scrT \scrT^\dagger
= \scrT \scrT^\dagger = \tsum_{j=1}^q \tau_j \tau_j^\top \s .
\end{align}
using which and $\scrH_\sharp = \scrH_z T_\sharp$, we write
\begin{align} \label{zHsharp2} 
|z_{\scrH_\sharp}|^2 = 
| z_{\scrH_z T_\sharp}|^2 = 
z^\top \scrH_z T_\sharp T_\sharp^\top \scrH^\top_z z 
= \tsum_{j=1}^{q} \ip{\scrH^\top_z z}{\tau_j}^2 \s .
\end{align}

Next, since $\Big( \s[-3] \begin{array}{c}
  - \snr^{-1} \hz \\ 1 \end{array} \s[-3] \Big)^\top  T_+  
  = \hz^{\top} -\hz^{\top} \snr^{-1} \snr= 0_q$
with $T_+$ in $\req{Hplus}$, the vector
\begin{align*} 
\tau_{q + 1}  = \Big( \s[-2] \begin{array}{c}
  - \snr^{-1} \hz \\ 1 \end{array}  \s[-2] \Big) 
\frac{1}{\sqrt{1 + |\snr^{-1} \hz|^2}}
\end{align*}
is in the null space of $T_+$ and therefore in that of
$T_\sharp$ per $\req{Tsharp}$.  Since the column spaces of
$T_{\sharp}$ and $T_{\sharp}T_{\sharp}^{\dagger}$ are identical,
we have $\tau_1, \dots, \tau_q, \tau_{q+1} \in \bbR^{q+1}$ forms
a basis for $\bbR^{q+1}$.


Observing that $|\scrH_z^\top z|^2 
= |\scrH^\top z|^2 + |z - z_\scrH|^2 
= |\scrH^\top z|^2 + 1 - |z_\scrH|^2 = 1$, 
\[ 1 = |\scrH_z^\top z|^2 
= \tsum_{j=1}^{q+1} \ip{\scrH^\top_z z}{\tau_j}^2
= |z_{\scrH_\sharp}|^2 + \ip{\scrH_z^\top z}{\tau_{q + 1}}^2
\]
since $\tau_1, \dots, \tau_{q+1}$ forms
a basis for $\bbR^{q + 1}$ and applying $\req{zHsharp2}$.
Consequently,
\[
|z_{\scrH_\sharp}|^2  
= 1 - \ip{\scrH^\top_z z}{\tau_{q+1}}^2 \s .
\]
It now only suffices to show that
$\ip{\scrH^\top_z z}{\tau_{q+1}}^2 > 0$ eventually in $p$.
For $\hz$ in $\req{zpH}$,
\[ 
|\snr^{-1} \hz|^2 = \Big|
  \frac{ \snr^{-1} \Pd \scrH^\top z}
{|z - z_\scrH|}  \Big|^2
= \frac{ |(\snr^{-2} - \Id) \scrH^\top z|^2}
{|z - z_\scrH|^2 }, \]
and $\zpH$ in $\req{zpH}$ has
$\ip{\zpH}{z} = \frac{1 - \ip{z_\scrH}{z}}{|z - z_\scrH|}
= |z-z_\scrH|$ by $\req{zzH}$ and $\req{zmzH}$. Thus,
\begin{align}
\ip{\scrH^\top_z z}{\tau_{q+1}}^2
&= \bigg( \left( \begin{array}{c}
  \scrH^\top z \\ |z-z_\scrH|
\end{array} \right)^\top
\left( \begin{array}{c}
  - \Psi^{-1}\hz \\ 1
\end{array} \right) \bigg)^2 
\frac{1}{1+|\Psi^{-1}\hz|^2}  \notag \\
&= \frac{\big(|z-z_\scrH|^2 - z^\top \scrH 
(\Psi^{-2}-\Id) \scrH^\top z   \big)^2}
{|z-z_\scrH|^2 + |(\Psi^{-2}-I)\scrH^{\top}z|^2} \notag \\
&= \frac{\big(1-|z_{\scrH}|^2 + |z_{\scrH}|^2 
- z^\top \scrH \Psi^{-2} \scrH^\top z\big)^2}
{|z-z_\scrH|^2 + |(\snr^{-2} -\Id)\scrH^{\top}z|^2}\notag \\
&= \frac{\big( 1 - z^\top \scrH \Psi^{-2} \scrH^\top z \big)^2}
{|z-z_\scrH|^2 + |(\Psi^{-2}-I)\scrH^{\top}z|^2}.
\label{Hztau}
\end{align}
By $\req{Hzlen}$ and the fact that $|z_B| = |\scrB^\top z|
\le 1$ we have,
\begin{align*}
    \lsup z^\top \scrH \snr^{-2} \scrH^\top z
     = \lsup |\scrB^\top z|^2
\end{align*}
under Assumption \ref{asm:eB} which also guarantees 
$\lsup |\scrB^\top z| < 1$. We deduce that the numerator of
$\req{Hztau}$ is eventually strictly
positive almost surely. The denominator is finite as
$|z - z_\scrH|^2 \le 1$ and the eigenvalues of 
$\Psi^{-2}$ are finite by  Theorem~\ref{thm:pca}\ref{pca:HBBH}.

Thus, 
$\ip{\scrH^\top_z z}{\tau_{q+1}}^2$ is almost surely bounded
away from zero eventually.

\vspace{0.16in}

\textsc{Step 2}. We prove that the numerator of 
$\req{EHsharp}$ almost surely eventually vanishes. We
omit the \tq{almost surely} clause for brevity below.
Recall that $\req{slick}$
supplies that
\begin{align} \label{Numzero}
  \scrB^\top (z - z_{\scrH_z T_*}) = 0 
\end{align}
provided $T^\top_* T_*$ is invertible for
$T_* = \scrH_z^\top \scrB$. To establish the latter,
we directly compute $T_*^\top = \big( \begin{array}{cc}
\scrB^\top \scrH & \scrE_p(\scrH) 
\end{array} \big)$
using $\scrB^\top \zpH = \scrE_p(\scrH)$
with $\scrE_p(\scrH)$ in $\req{optbias_gps}$. Then,
\begin{align} \label{TsTs}
T^\top_* T_* = \scrB^\top \scrH_z \scrH_z^\top \scrB
= \scrB^\top \scrH\scrH^\top \scrB
+ \scrE_p(\scrH)\s \scrE^\top_p(\scrH) 
\end{align}
and we deduce that $|T^\top_* T_*|$ is eventually bounded
since the columns of $\scrB$ and $\scrH$ have unit length
and $|\scrE_p(\scrH)|$ is eventually finite by Theorem
\ref{thm:pcabias}.
Since both terms in $\req{TsTs}$ are positive semidefinite 
and $\scrB^\top \scrH$ eventually invertible by Lemma
\ref{lem:BHinv}, all eigenvalues of $\req{TsTs}$ are 
strictly positive. Hence, $T_*^\top T_*$  is eventually
invertible.

In view of $\req{Numzero}$, it only suffices to prove that the
difference between $z_{\scrH_\sharp}= z_{\scrH_zT_{\sharp}}$ and
$z_{\scrH_zT_{*}}$ vanishes in some norm. By
Lemma \ref{lem:K} with $K = \scrB^\top \scrH \snr^{-1} \Rv
\Phi^{-1}$,
\begin{align}\label{limproj}
    \lsup 
   |z_{\scrH_zT_{\sharp}}-z_{\scrH_zT_{*}}| 
  = \lsup
|z_{\scrH_zT_{\sharp}}- z_{\scrH_zT_{*}K}| \s ,
\end{align}
owing to Lemma \ref{lem:BHinv} and 
Theorem \ref{thm:pca}\ref{pca:HBBH} which guarantee
that $\scrB^\top \scrH$ and
$\snr^{-1} \Rv \Phi^{-1}$ (and hence $K$) are eventually
invertible. Substituting $T_\sharp = T_+ \Rv \Phi^{-1}$,
we have
\begin{align*}
\limp | T_\sharp - T_* K |
&\le \limp \Big| \Big( \begin{array}{c}
  \snr \\ \hz^\top  \end{array} \Big) 
 - \scrH_z^\top \scrB \scrB^\top \scrH \snr^{-1} \Big| 
\s |\Rv \Phi^{-1}| = 0 
\end{align*}
which confirms $\req{Tprime}$ and applies $\req{HzBBH}$ which
was justified above (see $\req{PHBEH}$).
 Since the mapping 
$T \rightarrow z_{\scrH_z T}$ from the domain of
real $(q + 1)\times q$, full column rank matrices is continuous, 
we have via $\req{limproj}$ that 
$\lim\limits_{p\rightarrow\infty}|z_{\scrH_zT_{\sharp}}-z_{\scrH_zT_{*}}|
= 0$ as required.

We remark that since  
$\lsup |z_{\scrH_z T_\sharp}| < 1$, 
we now have $\lsup |z_{\scrH_z T_*}| < 1$.
This also proves that
$\scrE_p(\scrH_z T_*) = 0$ eventually in $p$ (see
comments below $\req{slick}$).

\end{proof}

\section{The Eigenvector Selection Function}
\label{app:select}

For any matrix $A\in\mathbb{R}^{m\times \ell}$, we enumerate
\(q\leq \min(\ell,m)\) singular values (in descending order) and
their left singular vectors in a well defined way. We start by
ordering all $d$ distinct singular values of $A$ as \(\lambda_1
> \lambda_2 > \cdots > \lambda_d\) (c.f.  $\Lambda_{\jj}$ in
Section~\ref{sec:notation}) and uniquely identifying the linear
subspaces \(\calK_1, \calK_2, \dots, \calK_d\) formed by the
associated left singular vectors.  Given the first $k-1$ left
singular vectors $v_1,v_2,..,v_{k-1}$ are selected, we select
the \(k\)th left singular vector \(v_k\) by taking the following
steps.
\begin{enumerate}
    \item Identify the unique \(j\in\{1,2,\dots,d\}\) for which,
    \begin{align}
        k\in \Big(\sum_{a=1}^{j-1}\dim(\mathcal{K}_{a}), 
  \sum_{a=1}^{j}\dim(\calK_a) \Big] \s . \label{eq:definitionofj}
    \end{align}
    \item Let \(\calK_{j,k} \subseteq \calK_j\) denote the
orthogonal complement of the subspace formed by the subset of
vectors \(v_1, \dots, v_{k-1}\) corresponding to \(\lambda_j\),
where the orthogonal complement is taken within
\(\mathcal{K}_j\). For the standard basis elements 
\(\rme_1, \rme_2, \dots, \rme_m\) of \(\mathbb{R}^m\), we
identify the unique \(\rme_s\) as the first one that is not 
orthogonal to \(\calK_{j,k}\). 

\item Set $v_k$ as the orthogonal projection of $\rme_s$ onto
$\calK_{j,k}$ normalized to $|v_k|=1$.

\end{enumerate}

Implementing this process sequentially on $k=1,2,...,m$
assembles a list of left singular vectors \(v_1, \dots, v_m\)
with associated singular values in decreasing order.  We define
$\nu_{ m \times q}(A)$ as an $m\times q$ matrix carrying
$v_1,v_2,..,v_q$ at its columns.

\begin{remark}
In the second step to define the \( k \)th left singular
vector, note that the subspace \(\mathcal{K}_{j,k}\) is of
non-zero dimension by $\req{eq:definitionofj}$. Moreover, 
the uniquely defined standard basis element \( \rme_s \) has
to exist. If it does not exist, the whole space \( \mathbb{R}^m
\) becomes orthogonal to \(\mathcal{K}_{j,k}\), implying that
\(\mathcal{K}_{j,k}\) is of zero dimension, which contradicts
our previous assertion.
\end{remark}

\begin{example}
We illustrate the above procedure with \(A = \Id_m\).  The
matrix \(\Id_m\) has \(\lambda_1 = 1\) as the sole singular
value which corresponds to the subspace of left singular vectors
\(\calK_1 = \bbR^m\). For \(k=1\) in the algorithm introduced
above, we obtain the corresponding \(j\) determined as $1$ by
$\req{eq:definitionofj}$. The subspace \(\calK_{j,k} =
\calK_{1,1}\) equals \(\calK_1 = \mathbb{R}^m\) as there has not
been any selection yet. Then the first of \(\rme_1, \dots,
\rme_m\) that is not orthogonal to \(\calK_{1,1} =
\mathbb{R}^m\) would clearly be \(\rme_1\).  Hence, \(v_1 =
\rme_1\) is the normalized orthogonal projection of \(\rme_1\)
onto \(\calK_{1,1} = \mathbb{R}^m\).  Next, we assume as an
induction hypothesis that \(v_1 = \rme_1, v_2 = \rme_2, \dots,
v_{k-1} = \rme_{k-1}\) and implement the \(k\)th step of the
algorithm to show \(v_k = e_k\). Clearly, \(j\) defined by
$\req{eq:definitionofj}$ corresponding to \(k\) is $1$.
Moreover, \(\calK_{1,k}\), the orthogonal complement of the
subspace formed by the vectors previously selected for the
singular value \(\lambda_1 = 1\), is spanned by \(\rme_k,
\rme_{k+1}, \dots, \rme_m\). Hence, the first of \(\rme_1,
\dots, \rme_m\) that is not orthogonal to \(\calK_{1,k}\) is
\(\rme_k\). That sets \(v_k = \rme_k\). As a result, we obtain
\(\nu_{m \times q}(\Id_m)\) assembled as \([\rme_1, \rme_2,
\dots, \rme_q]\) so that its \(i\)th column contains the
coordinate vector \(\rme_i\).
\end{example}

\section{Capon Beamforming}
\label{app:lit}

One important illustration of the pathological behaviour
described below $\req{Qhx}$ concerns robust (Capon) beamforming
(see \ci{cox1987}, \ci{li2005}) and \ci{vorobyov2013}).  Some
recent work that applies spectral methods  for robust
beamforming may be found in \ci{zhu2020}, \ci{luo2023} and
\ci{chen2024}, who survey related work. The importance of the
covariance estimation aspect of robust beamforming is also
well-recognized (e.g., \ci{stoica2007}, \ci{chen2010} and
\ci{xie2021}). In particular, the LW shrinkage estimator
developed in \ci{ledoit2004a} has had noteworthy influence on this
literature, despite being originally proposed for portfolio
selection in finance \cite{ledoit2003}. Typical application of
this estimator employs the identity matrix as the \tq{shrinkage
target}, which leaves the eigenvectors of the sample covariance
matrix unchanged (fn.  \ref{lwfoot}). However, the estimation
error in the sample  eigenvectors (especially for small
sample/snapshot sizes as is our setting) is known to have
material impact \cite{cox2002}. One (rare) example of robust
beamforming work that attempts to \tq{de-noise} sample
eigenvectors directly is  \ci{qj2015}.  But, their analysis does
not overlap with our $\req{obf}$--$\req{EHslim}$.


\end{appendix}

\begin{acks}[Acknowledgments]
We thank Haim Bar and Alec Kercheval for useful feedback on the
motivating optimization example (Section \ref{sec:motive}) in
our introduction. We thank Lisa Goldberg for an insightful
discussion that led to the main theorem of Section
\ref{sec:impossible}. We thank Kay Giesecke for many helpful
comments on an earlier draft of this paper.  We thank the
participants of the 2023 SIAM Conference on Financial
Mathematics in Philadelphia, PA, the UCLA Seminar on Financial
and Actuarial Mathematics, Los Angeles, CA, the CDAR Risk
Seminar, UC Berkeley and the AFT Lab Seminar, Stanford CA for
their comments and feedback on many of the ideas that led to
this manuscript.
\end{acks}
\bibliographystyle{imsart-nameyear} 
\bibliography{bibliography}       

\begin{thebibliography}{116}

\bibitem[\protect\citeauthoryear{Abbe, Fan and Wang}{2022}]{abbe2022}
\begin{barticle}[author]
\bauthor{\bsnm{Abbe},~\bfnm{Emmanuel}\binits{E.}},
  \bauthor{\bsnm{Fan},~\bfnm{Jianqing}\binits{J.}} \AND
  \bauthor{\bsnm{Wang},~\bfnm{Kaizheng}\binits{K.}}
(\byear{2022}).
\btitle{An {$\ell_p$} theory of PCA and spectral clustering}.
\bjournal{The Annals of Statistics}
\bvolume{50}
\bpages{2359--2385}.
\end{barticle}
\endbibitem

\bibitem[\protect\citeauthoryear{Abrahamsson, Selen and
  Stoica}{2007}]{stoica2007}
\begin{binproceedings}[author]
\bauthor{\bsnm{Abrahamsson},~\bfnm{Richard}\binits{R.}},
  \bauthor{\bsnm{Selen},~\bfnm{Yngve}\binits{Y.}} \AND
  \bauthor{\bsnm{Stoica},~\bfnm{Petre}\binits{P.}}
(\byear{2007}).
\btitle{Enhanced covariance matrix estimators in adaptive beamforming}.
In \bbooktitle{2007 IEEE International Conference on Acoustics, Speech and
  Signal Processing-ICASSP'07}
\bvolume{2}
\bpages{II--969}.
\bpublisher{IEEE}.
\end{binproceedings}
\endbibitem

\bibitem[\protect\citeauthoryear{Acker}{1974}]{acker1974}
\begin{barticle}[author]
\bauthor{\bsnm{Acker},~\bfnm{Andrew~F}\binits{A.~F.}}
(\byear{1974}).
\btitle{Absolute continuity of eigenvectors of time-varying operators}.
\bjournal{Proceedings of the American Mathematical Society}
\bvolume{42}
\bpages{198--201}.
\end{barticle}
\endbibitem

\bibitem[\protect\citeauthoryear{Agterberg, Lubberts and
  Priebe}{2022}]{agterberg2022}
\begin{barticle}[author]
\bauthor{\bsnm{Agterberg},~\bfnm{Joshua}\binits{J.}},
  \bauthor{\bsnm{Lubberts},~\bfnm{Zachary}\binits{Z.}} \AND
  \bauthor{\bsnm{Priebe},~\bfnm{Carey~E.}\binits{C.~E.}}
(\byear{2022}).
\btitle{Entrywise {Estimation} of {Singular} {Vectors} of {Low}-{Rank}
  {Matrices} {With} {Heteroskedasticity} and {Dependence}}.
\bjournal{IEEE Transactions on Information Theory}
\bvolume{68}
\bpages{4618--4650}.
\bdoi{10.1109/TIT.2022.3159085}
\end{barticle}
\endbibitem

\bibitem[\protect\citeauthoryear{Ahn et~al.}{2007}]{ahn2007}
\begin{barticle}[author]
\bauthor{\bsnm{Ahn},~\bfnm{Jeongyoun}\binits{J.}},
  \bauthor{\bsnm{Marron},~\bfnm{JS}\binits{J.}},
  \bauthor{\bsnm{Muller},~\bfnm{Keith~M}\binits{K.~M.}} \AND
  \bauthor{\bsnm{Chi},~\bfnm{Yueh-Yun}\binits{Y.-Y.}}
(\byear{2007}).
\btitle{The high-dimension, low-sample-size geometric representation holds
  under mild conditions}.
\bjournal{Biometrika}
\bvolume{94}
\bpages{760--766}.
\end{barticle}
\endbibitem

\bibitem[\protect\citeauthoryear{Aoshima et~al.}{2018}]{aoshima2018}
\begin{barticle}[author]
\bauthor{\bsnm{Aoshima},~\bfnm{Makoto}\binits{M.}},
  \bauthor{\bsnm{Shen},~\bfnm{Dan}\binits{D.}},
  \bauthor{\bsnm{Shen},~\bfnm{Haipeng}\binits{H.}},
  \bauthor{\bsnm{Yata},~\bfnm{Kazuyoshi}\binits{K.}},
  \bauthor{\bsnm{Zhou},~\bfnm{Yi-Hui}\binits{Y.-H.}} \AND
  \bauthor{\bsnm{Marron},~\bfnm{JS}\binits{J.}}
(\byear{2018}).
\btitle{A survey of high dimension low sample size asymptotics}.
\bjournal{Australian \& New Zealand journal of statistics}
\bvolume{60}
\bpages{4--19}.
\end{barticle}
\endbibitem

\bibitem[\protect\citeauthoryear{Bai, Liu and Wong}{2009}]{bai2009}
\begin{barticle}[author]
\bauthor{\bsnm{Bai},~\bfnm{Zhidong}\binits{Z.}},
  \bauthor{\bsnm{Liu},~\bfnm{Huixia}\binits{H.}} \AND
  \bauthor{\bsnm{Wong},~\bfnm{Wing-Keung}\binits{W.-K.}}
(\byear{2009}).
\btitle{Enhancement of the applicability of Markowitz's portfolio optimization
  by utilizing random matrix theory}.
\bjournal{Mathematical Finance: An International Journal of Mathematics,
  Statistics and Financial Economics}
\bvolume{19}
\bpages{639--667}.
\end{barticle}
\endbibitem

\bibitem[\protect\citeauthoryear{Bai and Ng}{2008}]{bai2008}
\begin{barticle}[author]
\bauthor{\bsnm{Bai},~\bfnm{Jushan}\binits{J.}} \AND
  \bauthor{\bsnm{Ng},~\bfnm{Serena}\binits{S.}}
(\byear{2008}).
\btitle{Large dimensional factor analysis}.
\bjournal{Foundations and Trends in Econometrics}
\bvolume{3}
\bpages{89--163}.
\end{barticle}
\endbibitem

\bibitem[\protect\citeauthoryear{Bai and Ng}{2019}]{bai2019}
\begin{barticle}[author]
\bauthor{\bsnm{Bai},~\bfnm{Jushan}\binits{J.}} \AND
  \bauthor{\bsnm{Ng},~\bfnm{Serena}\binits{S.}}
(\byear{2019}).
\btitle{Rank regularized estimation of approximate factor models}.
\bjournal{Journal of Econometrics}
\bvolume{212}
\bpages{78--96}.
\end{barticle}
\endbibitem

\bibitem[\protect\citeauthoryear{Bai and Ng}{2023}]{bai2023}
\begin{barticle}[author]
\bauthor{\bsnm{Bai},~\bfnm{Jushan}\binits{J.}} \AND
  \bauthor{\bsnm{Ng},~\bfnm{Serena}\binits{S.}}
(\byear{2023}).
\btitle{Approximate factor models with weaker loadings}.
\bjournal{Journal of Econometrics}.
\end{barticle}
\endbibitem

\bibitem[\protect\citeauthoryear{Bai and Yao}{2012}]{bai2012}
\begin{barticle}[author]
\bauthor{\bsnm{Bai},~\bfnm{Zhidong}\binits{Z.}} \AND
  \bauthor{\bsnm{Yao},~\bfnm{Jianfeng}\binits{J.}}
(\byear{2012}).
\btitle{On sample eigenvalues in a generalized spiked population model}.
\bjournal{Journal of Multivariate Analysis}
\bvolume{106}
\bpages{167--177}.
\end{barticle}
\endbibitem

\bibitem[\protect\citeauthoryear{Bauder et~al.}{2021}]{bauder2021}
\begin{barticle}[author]
\bauthor{\bsnm{Bauder},~\bfnm{David}\binits{D.}},
  \bauthor{\bsnm{Bodnar},~\bfnm{Taras}\binits{T.}},
  \bauthor{\bsnm{Parolya},~\bfnm{Nestor}\binits{N.}} \AND
  \bauthor{\bsnm{Schmid},~\bfnm{Wolfgang}\binits{W.}}
(\byear{2021}).
\btitle{Bayesian mean--variance analysis: optimal portfolio selection under
  parameter uncertainty}.
\bjournal{Quantitative Finance}
\bvolume{21}
\bpages{221--242}.
\end{barticle}
\endbibitem

\bibitem[\protect\citeauthoryear{Best and Grauer}{1991}]{best1991}
\begin{barticle}[author]
\bauthor{\bsnm{Best},~\bfnm{Michael~J}\binits{M.~J.}} \AND
  \bauthor{\bsnm{Grauer},~\bfnm{Robert~R}\binits{R.~R.}}
(\byear{1991}).
\btitle{On the sensitivity of mean-variance-efficient portfolios to changes in
  asset means: some analytical and computational results}.
\bjournal{The review of financial studies}
\bvolume{4}
\bpages{315--342}.
\end{barticle}
\endbibitem

\bibitem[\protect\citeauthoryear{Bianchi, Goldberg and
  Rosenberg}{2017}]{bianchi2017}
\begin{barticle}[author]
\bauthor{\bsnm{Bianchi},~\bfnm{Stephen~W}\binits{S.~W.}},
  \bauthor{\bsnm{Goldberg},~\bfnm{Lisa~R}\binits{L.~R.}} \AND
  \bauthor{\bsnm{Rosenberg},~\bfnm{Allan}\binits{A.}}
(\byear{2017}).
\btitle{The impact of estimation error on latent factor model forecasts of
  portfolio risk}.
\bjournal{The Journal of Portfolio Management}
\bvolume{43}
\bpages{147--156}.
\end{barticle}
\endbibitem

\bibitem[\protect\citeauthoryear{Blin, Guerard and Mark}{2022}]{blin2022}
\begin{bincollection}[author]
\bauthor{\bsnm{Blin},~\bfnm{John}\binits{J.}},
  \bauthor{\bsnm{Guerard},~\bfnm{John}\binits{J.}} \AND
  \bauthor{\bsnm{Mark},~\bfnm{Andrew}\binits{A.}}
(\byear{2022}).
\btitle{A History of Commercially Available Risk Models}.
In \bbooktitle{Encyclopedia of Finance}
\bpages{1--39}.
\bpublisher{Springer}.
\end{bincollection}
\endbibitem

\bibitem[\protect\citeauthoryear{Bodnar, Okhrin and Parolya}{2022}]{bodnar2022}
\begin{barticle}[author]
\bauthor{\bsnm{Bodnar},~\bfnm{Taras}\binits{T.}},
  \bauthor{\bsnm{Okhrin},~\bfnm{Yarema}\binits{Y.}} \AND
  \bauthor{\bsnm{Parolya},~\bfnm{Nestor}\binits{N.}}
(\byear{2022}).
\btitle{Optimal shrinkage-based portfolio selection in high dimensions}.
\bjournal{Journal of Business \& Economic Statistics}
\bvolume{41}
\bpages{140--156}.
\end{barticle}
\endbibitem

\bibitem[\protect\citeauthoryear{Bodnar, Parolya and Schmid}{2018}]{bodnar2018}
\begin{barticle}[author]
\bauthor{\bsnm{Bodnar},~\bfnm{Taras}\binits{T.}},
  \bauthor{\bsnm{Parolya},~\bfnm{Nestor}\binits{N.}} \AND
  \bauthor{\bsnm{Schmid},~\bfnm{Wolfgang}\binits{W.}}
(\byear{2018}).
\btitle{Estimation of the global minimum variance portfolio in high
  dimensions}.
\bjournal{European Journal of Operational Research}
\bvolume{266}
\bpages{371--390}.
\end{barticle}
\endbibitem

\bibitem[\protect\citeauthoryear{Bodnar, Parolya and
  Thorsén}{2023}]{bodnar2023}
\begin{barticle}[author]
\bauthor{\bsnm{Bodnar},~\bfnm{Taras}\binits{T.}},
  \bauthor{\bsnm{Parolya},~\bfnm{Nestor}\binits{N.}} \AND
  \bauthor{\bsnm{Thorsén},~\bfnm{Erik}\binits{E.}}
(\byear{2023}).
\btitle{Dynamic Shrinkage Estimation of the High-Dimensional Minimum-Variance
  Portfolio}.
\bjournal{IEEE Transactions on Signal Processing}
\bvolume{71}
\bpages{1334-1349}.
\bdoi{10.1109/TSP.2023.3263950}
\end{barticle}
\endbibitem

\bibitem[\protect\citeauthoryear{Bongiorno and Challet}{2023}]{challet2023}
\begin{barticle}[author]
\bauthor{\bsnm{Bongiorno},~\bfnm{Christian}\binits{C.}} \AND
  \bauthor{\bsnm{Challet},~\bfnm{Damien}\binits{D.}}
(\byear{2023}).
\btitle{Non-linear shrinkage of the price return covariance matrix is far from
  optimal for portfolio optimization}.
\bjournal{Finance Research Letters}
\bvolume{52}
\bpages{103383}.
\bdoi{https://doi.org/10.1016/j.frl.2022.103383}
\end{barticle}
\endbibitem

\bibitem[\protect\citeauthoryear{Boyd et~al.}{2024}]{boyd2024}
\begin{barticle}[author]
\bauthor{\bsnm{Boyd},~\bfnm{Stephen}\binits{S.}},
  \bauthor{\bsnm{Johansson},~\bfnm{Kasper}\binits{K.}},
  \bauthor{\bsnm{Kahn},~\bfnm{Ronald}\binits{R.}},
  \bauthor{\bsnm{Schiele},~\bfnm{Philipp}\binits{P.}} \AND
  \bauthor{\bsnm{Schmelzer},~\bfnm{Thomas}\binits{T.}}
(\byear{2024}).
\btitle{Markowitz Portfolio Construction at Seventy}.
\bjournal{arXiv preprint arXiv:2401.05080}.
\end{barticle}
\endbibitem

\bibitem[\protect\citeauthoryear{Bun, Bouchaud and Potters}{2017}]{bun2017}
\begin{barticle}[author]
\bauthor{\bsnm{Bun},~\bfnm{Jo{\"e}l}\binits{J.}},
  \bauthor{\bsnm{Bouchaud},~\bfnm{Jean-Philippe}\binits{J.-P.}} \AND
  \bauthor{\bsnm{Potters},~\bfnm{Marc}\binits{M.}}
(\byear{2017}).
\btitle{Cleaning large correlation matrices: tools from random matrix theory}.
\bjournal{Physics Reports}
\bvolume{666}
\bpages{1--109}.
\end{barticle}
\endbibitem

\bibitem[\protect\citeauthoryear{Cai et~al.}{2020}]{cai2020}
\begin{barticle}[author]
\bauthor{\bsnm{Cai},~\bfnm{T.~Tony}\binits{T.~T.}},
  \bauthor{\bsnm{Hu},~\bfnm{Jianchang}\binits{J.}},
  \bauthor{\bsnm{Li},~\bfnm{Yingying}\binits{Y.}} \AND
  \bauthor{\bsnm{Zheng},~\bfnm{Xinghua}\binits{X.}}
(\byear{2020}).
\btitle{High-dimensional minimum variance portfolio estimation based on
  high-frequency data}.
\bjournal{Journal of Econometrics}
\bvolume{214}
\bpages{482-494}.
\bdoi{https://doi.org/10.1016/j.jeconom.2019.04.039}
\end{barticle}
\endbibitem

\bibitem[\protect\citeauthoryear{Cai et~al.}{2021}]{cai2021}
\begin{barticle}[author]
\bauthor{\bsnm{Cai},~\bfnm{Changxiao}\binits{C.}},
  \bauthor{\bsnm{Li},~\bfnm{Gen}\binits{G.}},
  \bauthor{\bsnm{Chi},~\bfnm{Yuejie}\binits{Y.}},
  \bauthor{\bsnm{Poor},~\bfnm{H.~Vincent}\binits{H.~V.}} \AND
  \bauthor{\bsnm{Chen},~\bfnm{Yuxin}\binits{Y.}}
(\byear{2021}).
\btitle{Subspace estimation from unbalanced and incomplete data matrices:
  $\ell_{\{2,infty\}}$ statistical guarantees}.
\bjournal{The Annals of Statistics}
\bvolume{49}
\bpages{944--967}.
\bdoi{10.1214/20-AOS1986}
\end{barticle}
\endbibitem

\bibitem[\protect\citeauthoryear{Cand{\`e}s et~al.}{2011}]{candes2011}
\begin{barticle}[author]
\bauthor{\bsnm{Cand{\`e}s},~\bfnm{Emmanuel~J}\binits{E.~J.}},
  \bauthor{\bsnm{Li},~\bfnm{Xiaodong}\binits{X.}},
  \bauthor{\bsnm{Ma},~\bfnm{Yi}\binits{Y.}} \AND
  \bauthor{\bsnm{Wright},~\bfnm{John}\binits{J.}}
(\byear{2011}).
\btitle{Robust principal component analysis?}
\bjournal{Journal of the ACM (JACM)}
\bvolume{58}
\bpages{1--37}.
\end{barticle}
\endbibitem

\bibitem[\protect\citeauthoryear{Capon}{1969}]{capon1969}
\begin{barticle}[author]
\bauthor{\bsnm{Capon},~\bfnm{Jack}\binits{J.}}
(\byear{1969}).
\btitle{High-resolution frequency-wavenumber spectrum analysis}.
\bjournal{Proceedings of the IEEE}
\bvolume{57}
\bpages{1408--1418}.
\end{barticle}
\endbibitem

\bibitem[\protect\citeauthoryear{Casella and Hwang}{1982}]{casella1982}
\begin{barticle}[author]
\bauthor{\bsnm{Casella},~\bfnm{George}\binits{G.}} \AND
  \bauthor{\bsnm{Hwang},~\bfnm{Jiunn~Tzon}\binits{J.~T.}}
(\byear{1982}).
\btitle{Limit expressions for the risk of james-stein estimators}.
\bjournal{Canadian Journal of Statistics}
\bvolume{10}
\bpages{305--309}.
\end{barticle}
\endbibitem

\bibitem[\protect\citeauthoryear{Chamberlain and
  Rothschild}{1983}]{chamroth1983}
\begin{barticle}[author]
\bauthor{\bsnm{Chamberlain},~\bfnm{Gary}\binits{G.}} \AND
  \bauthor{\bsnm{Rothschild},~\bfnm{Michael}\binits{M.}}
(\byear{1983}).
\btitle{Arbitrage, Factor Structure, and Mean-Variance Analysis on Large Asset
  Markets}.
\bjournal{Econometrica: Journal of the Econometric Society}
\bpages{1281--1304}.
\end{barticle}
\endbibitem

\bibitem[\protect\citeauthoryear{Chandrasekaran, Parrilo and
  Willsky}{2012}]{chandra2012}
\begin{barticle}[author]
\bauthor{\bsnm{Chandrasekaran},~\bfnm{Venkat}\binits{V.}},
  \bauthor{\bsnm{Parrilo},~\bfnm{Pablo~A}\binits{P.~A.}} \AND
  \bauthor{\bsnm{Willsky},~\bfnm{Alan~S}\binits{A.~S.}}
(\byear{2012}).
\btitle{LATENT VARIABLE GRAPHICAL MODEL SELECTION VIA CONVEX OPTIMIZATION}.
\bjournal{The Annals of Statistics}
\bpages{1935--1967}.
\end{barticle}
\endbibitem

\bibitem[\protect\citeauthoryear{Chen, Qiu and Sheng}{2024}]{chen2024}
\begin{barticle}[author]
\bauthor{\bsnm{Chen},~\bfnm{Xiangwei}\binits{X.}},
  \bauthor{\bsnm{Qiu},~\bfnm{Shuang}\binits{S.}} \AND
  \bauthor{\bsnm{Sheng},~\bfnm{Weixing}\binits{W.}}
(\byear{2024}).
\btitle{Improved eigenspace-based method for robust adaptive beamforming with
  dimension search}.
\bjournal{Signal Processing}
\bvolume{218}
\bpages{109366}.
\end{barticle}
\endbibitem

\bibitem[\protect\citeauthoryear{Chen et~al.}{2010}]{chen2010}
\begin{barticle}[author]
\bauthor{\bsnm{Chen},~\bfnm{Yilun}\binits{Y.}},
  \bauthor{\bsnm{Wiesel},~\bfnm{Ami}\binits{A.}},
  \bauthor{\bsnm{Eldar},~\bfnm{Yonina~C}\binits{Y.~C.}} \AND
  \bauthor{\bsnm{Hero},~\bfnm{Alfred~O}\binits{A.~O.}}
(\byear{2010}).
\btitle{Shrinkage algorithms for MMSE covariance estimation}.
\bjournal{IEEE transactions on signal processing}
\bvolume{58}
\bpages{5016--5029}.
\end{barticle}
\endbibitem

\bibitem[\protect\citeauthoryear{Clarke, {De Silva} and
  Thorley}{2011}]{clarke2011}
\begin{barticle}[author]
\bauthor{\bsnm{Clarke},~\bfnm{Roger}\binits{R.}}, \bauthor{\bsnm{{De
  Silva}},~\bfnm{Harindra}\binits{H.}} \AND
  \bauthor{\bsnm{Thorley},~\bfnm{Steven}\binits{S.}}
(\byear{2011}).
\btitle{Minimum-Variance Portfolio Composition}.
\bjournal{Journal of Portfolio Management}
\bvolume{2}
\bpages{31--45}.
\end{barticle}
\endbibitem

\bibitem[\protect\citeauthoryear{Connor and Korajczyk}{1986}]{connor1986}
\begin{barticle}[author]
\bauthor{\bsnm{Connor},~\bfnm{Gregory}\binits{G.}} \AND
  \bauthor{\bsnm{Korajczyk},~\bfnm{Robert~A.}\binits{R.~A.}}
(\byear{1986}).
\btitle{Performance measurement with the arbitrage pricing theory: A new
  framework for analysis}.
\bjournal{Journal of financial economics}
\bvolume{15}
\bpages{373--394}.
\end{barticle}
\endbibitem

\bibitem[\protect\citeauthoryear{Cox}{2002}]{cox2002}
\begin{binproceedings}[author]
\bauthor{\bsnm{Cox},~\bfnm{Henry}\binits{H.}}
(\byear{2002}).
\btitle{Adaptive beamforming in non-stationary environments}.
In \bbooktitle{Conference Record of the Thirty-Sixth Asilomar Conference on
  Signals, Systems and Computers, 2002.}
\bvolume{1}
\bpages{431--438}.
\bpublisher{IEEE}.
\end{binproceedings}
\endbibitem

\bibitem[\protect\citeauthoryear{Cox, Zeskind and Owen}{1987}]{cox1987}
\begin{barticle}[author]
\bauthor{\bsnm{Cox},~\bfnm{Henry}\binits{H.}},
  \bauthor{\bsnm{Zeskind},~\bfnm{Robertm}\binits{R.}} \AND
  \bauthor{\bsnm{Owen},~\bfnm{Markm}\binits{M.}}
(\byear{1987}).
\btitle{Robust adaptive beamforming}.
\bjournal{IEEE Transactions on Acoustics, Speech, and Signal Processing}
\bvolume{35}
\bpages{1365--1376}.
\end{barticle}
\endbibitem

\bibitem[\protect\citeauthoryear{De~Mol, Giannone and Reichlin}{2008}]{de2008}
\begin{barticle}[author]
\bauthor{\bsnm{De~Mol},~\bfnm{Christine}\binits{C.}},
  \bauthor{\bsnm{Giannone},~\bfnm{Domenico}\binits{D.}} \AND
  \bauthor{\bsnm{Reichlin},~\bfnm{Lucrezia}\binits{L.}}
(\byear{2008}).
\btitle{Forecasting using a large number of predictors: Is Bayesian shrinkage a
  valid alternative to principal components?}
\bjournal{Journal of Econometrics}
\bvolume{146}
\bpages{318--328}.
\end{barticle}
\endbibitem

\bibitem[\protect\citeauthoryear{Ding, Li and Zheng}{2021}]{ding2021}
\begin{barticle}[author]
\bauthor{\bsnm{Ding},~\bfnm{Yi}\binits{Y.}},
  \bauthor{\bsnm{Li},~\bfnm{Yingying}\binits{Y.}} \AND
  \bauthor{\bsnm{Zheng},~\bfnm{Xinghua}\binits{X.}}
(\byear{2021}).
\btitle{High dimensional minimum variance portfolio estimation under
  statistical factor models}.
\bjournal{Journal of Econometrics}
\bvolume{222}
\bpages{502-515}.
\bnote{Annals Issue: Financial Econometrics in the Age of the Digital Economy}.
\bdoi{https://doi.org/10.1016/j.jeconom.2020.07.013}
\end{barticle}
\endbibitem

\bibitem[\protect\citeauthoryear{Donoho, Gavish and
  Johnstone}{2018}]{donoho2018}
\begin{barticle}[author]
\bauthor{\bsnm{Donoho},~\bfnm{David~L}\binits{D.~L.}},
  \bauthor{\bsnm{Gavish},~\bfnm{Matan}\binits{M.}} \AND
  \bauthor{\bsnm{Johnstone},~\bfnm{Iain~M}\binits{I.~M.}}
(\byear{2018}).
\btitle{Optimal shrinkage of eigenvalues in the spiked covariance model}.
\bjournal{Annals of statistics}
\bvolume{46}
\bpages{1742}.
\end{barticle}
\endbibitem

\bibitem[\protect\citeauthoryear{Donoho, Gavish and Romanov}{2023}]{donoho2023}
\begin{barticle}[author]
\bauthor{\bsnm{Donoho},~\bfnm{David}\binits{D.}},
  \bauthor{\bsnm{Gavish},~\bfnm{Matan}\binits{M.}} \AND
  \bauthor{\bsnm{Romanov},~\bfnm{Elad}\binits{E.}}
(\byear{2023}).
\btitle{ScreeNOT: Exact MSE-optimal singular value thresholding in correlated
  noise}.
\bjournal{The Annals of Statistics}
\bvolume{51}
\bpages{122--148}.
\end{barticle}
\endbibitem

\bibitem[\protect\citeauthoryear{El~Karoui}{2010}]{el2010}
\begin{barticle}[author]
\bauthor{\bsnm{El~Karoui},~\bfnm{Noureddine}\binits{N.}}
(\byear{2010}).
\btitle{High-dimensionality effects in the Markowitz problem and other
  quadratic programs with linear constraints: Risk underestimation}.
\end{barticle}
\endbibitem

\bibitem[\protect\citeauthoryear{El~Karoui}{2013}]{el2013}
\begin{barticle}[author]
\bauthor{\bsnm{El~Karoui},~\bfnm{Noureddine}\binits{N.}}
(\byear{2013}).
\btitle{On the realized risk of high-dimensional Markowitz portfolios}.
\bjournal{SIAM Journal on Financial Mathematics}
\bvolume{4}
\bpages{737--783}.
\end{barticle}
\endbibitem

\bibitem[\protect\citeauthoryear{Fan, Fan and Lv}{2008}]{fan2008}
\begin{barticle}[author]
\bauthor{\bsnm{Fan},~\bfnm{Jianqing}\binits{J.}},
  \bauthor{\bsnm{Fan},~\bfnm{Yingying}\binits{Y.}} \AND
  \bauthor{\bsnm{Lv},~\bfnm{Jinchi}\binits{J.}}
(\byear{2008}).
\btitle{High dimensional covariance matrix estimation using a factor model}.
\bjournal{Journal of Econometrics}
\bvolume{147}
\bpages{186--197}.
\end{barticle}
\endbibitem

\bibitem[\protect\citeauthoryear{Fan, Liao and Mincheva}{2013}]{fan2013}
\begin{barticle}[author]
\bauthor{\bsnm{Fan},~\bfnm{Jianqing}\binits{J.}},
  \bauthor{\bsnm{Liao},~\bfnm{Yuan}\binits{Y.}} \AND
  \bauthor{\bsnm{Mincheva},~\bfnm{Martina}\binits{M.}}
(\byear{2013}).
\btitle{Large covariance estimation by thresholding principal orthogonal
  complements}.
\bjournal{Journal of the Royal Statistical Society Series B: Statistical
  Methodology}
\bvolume{75}
\bpages{603--680}.
\end{barticle}
\endbibitem

\bibitem[\protect\citeauthoryear{Fan, Liao and Liu}{2016}]{fan2016a}
\begin{barticle}[author]
\bauthor{\bsnm{Fan},~\bfnm{Jianqing}\binits{J.}},
  \bauthor{\bsnm{Liao},~\bfnm{Yuan}\binits{Y.}} \AND
  \bauthor{\bsnm{Liu},~\bfnm{Han}\binits{H.}}
(\byear{2016}).
\btitle{An overview of the estimation of large covariance and precision
  matrices}.
\bjournal{The Econometrics Journal}
\bvolume{19}
\bpages{C1--C32}.
\end{barticle}
\endbibitem

\bibitem[\protect\citeauthoryear{Fan, Liao and Wang}{2016}]{fan2016b}
\begin{barticle}[author]
\bauthor{\bsnm{Fan},~\bfnm{Jianqing}\binits{J.}},
  \bauthor{\bsnm{Liao},~\bfnm{Yuan}\binits{Y.}} \AND
  \bauthor{\bsnm{Wang},~\bfnm{Weichen}\binits{W.}}
(\byear{2016}).
\btitle{Projected principal component analysis in factor models}.
\bjournal{Annals of statistics}
\bvolume{44}
\bpages{219}.
\end{barticle}
\endbibitem

\bibitem[\protect\citeauthoryear{Fan, Masini and Medeiros}{2023}]{fan2023}
\begin{barticle}[author]
\bauthor{\bsnm{Fan},~\bfnm{Jianqing}\binits{J.}},
  \bauthor{\bsnm{Masini},~\bfnm{Ricardo~P}\binits{R.~P.}} \AND
  \bauthor{\bsnm{Medeiros},~\bfnm{Marcelo~C}\binits{M.~C.}}
(\byear{2023}).
\btitle{Bridging factor and sparse models}.
\bjournal{The Annals of Statistics}
\bvolume{51}
\bpages{1692--1717}.
\end{barticle}
\endbibitem

\bibitem[\protect\citeauthoryear{Fan, Wang and Zhong}{2018}]{fan2018inf}
\begin{barticle}[author]
\bauthor{\bsnm{Fan},~\bfnm{Jianqing}\binits{J.}},
  \bauthor{\bsnm{Wang},~\bfnm{Weichen}\binits{W.}} \AND
  \bauthor{\bsnm{Zhong},~\bfnm{Yiqiao}\binits{Y.}}
(\byear{2018}).
\btitle{An {$\ell_\infty$} eigenvector perturbation bound and its application
  to robust covariance estimation}.
\bjournal{Journal of Machine Learning Research}
\bvolume{18}
\bpages{1--42}.
\end{barticle}
\endbibitem

\bibitem[\protect\citeauthoryear{Fan, Zhang and Yu}{2012}]{fan2012}
\begin{barticle}[author]
\bauthor{\bsnm{Fan},~\bfnm{Jianqing}\binits{J.}},
  \bauthor{\bsnm{Zhang},~\bfnm{Jingjin}\binits{J.}} \AND
  \bauthor{\bsnm{Yu},~\bfnm{Ke}\binits{K.}}
(\byear{2012}).
\btitle{Vast portfolio selection with gross-exposure constraints}.
\bjournal{Journal of the American Statistical Association}
\bvolume{107}
\bpages{592--606}.
\end{barticle}
\endbibitem

\bibitem[\protect\citeauthoryear{Fan and Zhong}{2018}]{fan2018}
\begin{barticle}[author]
\bauthor{\bsnm{Fan},~\bfnm{Jianqing}\binits{J.}} \AND
  \bauthor{\bsnm{Zhong},~\bfnm{Yiqiao}\binits{Y.}}
(\byear{2018}).
\btitle{Optimal subspace estimation using overidentifying vectors via
  generalized method of moments}.
\bjournal{arXiv preprint arXiv:1805.02826}.
\end{barticle}
\endbibitem

\bibitem[\protect\citeauthoryear{Farn{\`e} and Montanari}{2024}]{farne2024}
\begin{barticle}[author]
\bauthor{\bsnm{Farn{\`e}},~\bfnm{Matteo}\binits{M.}} \AND
  \bauthor{\bsnm{Montanari},~\bfnm{Angela}\binits{A.}}
(\byear{2024}).
\btitle{Large factor model estimation by nuclear norm plus $\ell_1$ norm
  penalization}.
\bjournal{Journal of Multivariate Analysis}
\bvolume{199}
\bpages{105244}.
\end{barticle}
\endbibitem

\bibitem[\protect\citeauthoryear{Fisher and Sun}{2011}]{fisher2011}
\begin{barticle}[author]
\bauthor{\bsnm{Fisher},~\bfnm{Thomas~J}\binits{T.~J.}} \AND
  \bauthor{\bsnm{Sun},~\bfnm{Xiaoqian}\binits{X.}}
(\byear{2011}).
\btitle{Improved Stein-type shrinkage estimators for the high-dimensional
  multivariate normal covariance matrix}.
\bjournal{Computational Statistics \& Data Analysis}
\bvolume{55}
\bpages{1909--1918}.
\end{barticle}
\endbibitem

\bibitem[\protect\citeauthoryear{Goldberg, Gurdogan and
  Kercheval}{2023}]{goldberg2023}
\begin{bmisc}[author]
\bauthor{\bsnm{Goldberg},~\bfnm{Lisa~R}\binits{L.~R.}},
  \bauthor{\bsnm{Gurdogan},~\bfnm{Hubeyb}\binits{H.}} \AND
  \bauthor{\bsnm{Kercheval},~\bfnm{Alec}\binits{A.}}
(\byear{2023}).
\btitle{Portfolio optimization via strategy-specific eigenvector shrinkage}.
\end{bmisc}
\endbibitem

\bibitem[\protect\citeauthoryear{Goldberg and
  Kercheval}{2023}]{goldberg2023surv}
\begin{barticle}[author]
\bauthor{\bsnm{Goldberg},~\bfnm{Lisa~R}\binits{L.~R.}} \AND
  \bauthor{\bsnm{Kercheval},~\bfnm{Alec~N}\binits{A.~N.}}
(\byear{2023}).
\btitle{James--Stein for the leading eigenvector}.
\bjournal{Proceedings of the National Academy of Sciences}
\bvolume{120}
\bpages{e2207046120}.
\end{barticle}
\endbibitem

\bibitem[\protect\citeauthoryear{Goldberg, Papanicolaou and
  Shkolnik}{2022}]{goldberg2022}
\begin{barticle}[author]
\bauthor{\bsnm{Goldberg},~\bfnm{Lisa~R}\binits{L.~R.}},
  \bauthor{\bsnm{Papanicolaou},~\bfnm{Alex}\binits{A.}} \AND
  \bauthor{\bsnm{Shkolnik},~\bfnm{Alex}\binits{A.}}
(\byear{2022}).
\btitle{The dispersion bias}.
\bjournal{SIAM Journal on Financial Mathematics}
\bvolume{13}
\bpages{521--550}.
\end{barticle}
\endbibitem

\bibitem[\protect\citeauthoryear{Goldberg et~al.}{2020}]{goldberg2020}
\begin{barticle}[author]
\bauthor{\bsnm{Goldberg},~\bfnm{Lisa~R}\binits{L.~R.}},
  \bauthor{\bsnm{Papanicolaou},~\bfnm{Alex}\binits{A.}},
  \bauthor{\bsnm{Shkolnik},~\bfnm{Alex}\binits{A.}} \AND
  \bauthor{\bsnm{Ulucam},~\bfnm{Simge}\binits{S.}}
(\byear{2020}).
\btitle{Better betas}.
\bjournal{The Journal of Portfolio Management}
\bvolume{47}
\bpages{119--136}.
\end{barticle}
\endbibitem

\bibitem[\protect\citeauthoryear{Gurdogan and Kercheval}{2022}]{gurdogan2022}
\begin{barticle}[author]
\bauthor{\bsnm{Gurdogan},~\bfnm{Hubeyb}\binits{H.}} \AND
  \bauthor{\bsnm{Kercheval},~\bfnm{Alec}\binits{A.}}
(\byear{2022}).
\btitle{Multiple Anchor Point Shrinkage for the Sample Covariance Matrix}.
\bjournal{SIAM Journal on Financial Mathematics}
\bvolume{13}
\bpages{1112--1143}.
\end{barticle}
\endbibitem

\bibitem[\protect\citeauthoryear{Hager and Hungerford}{2015}]{hager2015}
\begin{barticle}[author]
\bauthor{\bsnm{Hager},~\bfnm{William~W}\binits{W.~W.}} \AND
  \bauthor{\bsnm{Hungerford},~\bfnm{James~T}\binits{J.~T.}}
(\byear{2015}).
\btitle{Continuous quadratic programming formulations of optimization problems
  on graphs}.
\bjournal{European Journal of Operational Research}
\bvolume{240}
\bpages{328--337}.
\end{barticle}
\endbibitem

\bibitem[\protect\citeauthoryear{Hall, Marron and Neeman}{2005}]{hall2005}
\begin{barticle}[author]
\bauthor{\bsnm{Hall},~\bfnm{Peter}\binits{P.}},
  \bauthor{\bsnm{Marron},~\bfnm{James~Stephen}\binits{J.~S.}} \AND
  \bauthor{\bsnm{Neeman},~\bfnm{Amnon}\binits{A.}}
(\byear{2005}).
\btitle{Geometric representation of high dimension, low sample size data}.
\bjournal{Journal of the Royal Statistical Society Series B: Statistical
  Methodology}
\bvolume{67}
\bpages{427--444}.
\end{barticle}
\endbibitem

\bibitem[\protect\citeauthoryear{Hegerl et~al.}{1996}]{hegerl1996}
\begin{barticle}[author]
\bauthor{\bsnm{Hegerl},~\bfnm{HANs}\binits{H.} \bsuffix{Gabriele~C}},
  \bauthor{\bsnm{Hasselmann},~\bfnm{Klaus}\binits{K.}},
  \bauthor{\bsnm{Santer},~\bfnm{Benjamin~D}\binits{B.~D.}},
  \bauthor{\bsnm{Cubasch},~\bfnm{Ulrich}\binits{U.}} \AND
  \bauthor{\bsnm{Jones},~\bfnm{Philip~D}\binits{P.~D.}}
(\byear{1996}).
\btitle{Detecting greenhouse-gas-induced climate change with an optimal
  fingerprint method}.
\bjournal{Journal of Climate}
\bvolume{9}
\bpages{2281--2306}.
\end{barticle}
\endbibitem

\bibitem[\protect\citeauthoryear{Huberman}{1982}]{huberman1982}
\begin{barticle}[author]
\bauthor{\bsnm{Huberman},~\bfnm{Gur}\binits{G.}}
(\byear{1982}).
\btitle{A simple approach to arbitrage pricing theory}.
\bjournal{Journal of Economic Theory}
\bvolume{28}
\bpages{183--191}.
\end{barticle}
\endbibitem

\bibitem[\protect\citeauthoryear{Jagannathan and Ma}{2003}]{jm2003}
\begin{barticle}[author]
\bauthor{\bsnm{Jagannathan},~\bfnm{Ravi}\binits{R.}} \AND
  \bauthor{\bsnm{Ma},~\bfnm{Tongshu}\binits{T.}}
(\byear{2003}).
\btitle{Risk reduction in large portfolios: Why imposing the wrong constraints
  helps}.
\bjournal{The journal of finance}
\bvolume{58}
\bpages{1651--1683}.
\end{barticle}
\endbibitem

\bibitem[\protect\citeauthoryear{Johnstone}{2001}]{johnstone2001}
\begin{barticle}[author]
\bauthor{\bsnm{Johnstone},~\bfnm{Iain~M}\binits{I.~M.}}
(\byear{2001}).
\btitle{On the distribution of the largest eigenvalue in principal components
  analysis}.
\bjournal{The Annals of statistics}
\bvolume{29}
\bpages{295--327}.
\end{barticle}
\endbibitem

\bibitem[\protect\citeauthoryear{Johnstone and Lu}{2009}]{johnstone2009}
\begin{barticle}[author]
\bauthor{\bsnm{Johnstone},~\bfnm{Iain~M}\binits{I.~M.}} \AND
  \bauthor{\bsnm{Lu},~\bfnm{Arthur~Yu}\binits{A.~Y.}}
(\byear{2009}).
\btitle{On consistency and sparsity for principal components analysis in high
  dimensions}.
\bjournal{Journal of the American Statistical Association}
\bvolume{104}
\bpages{682--693}.
\end{barticle}
\endbibitem

\bibitem[\protect\citeauthoryear{Jung}{2022}]{jung2022}
\begin{barticle}[author]
\bauthor{\bsnm{Jung},~\bfnm{Sungkyu}\binits{S.}}
(\byear{2022}).
\btitle{Adjusting systematic bias in high dimensional principal component
  scores}.
\bjournal{Statistica Sinica}
\bvolume{32}
\bpages{939--959}.
\end{barticle}
\endbibitem

\bibitem[\protect\citeauthoryear{Jung, Lee and Ahn}{2018}]{jung2018}
\begin{barticle}[author]
\bauthor{\bsnm{Jung},~\bfnm{Sungkyu}\binits{S.}},
  \bauthor{\bsnm{Lee},~\bfnm{Myung~Hee}\binits{M.~H.}} \AND
  \bauthor{\bsnm{Ahn},~\bfnm{Jeongyoun}\binits{J.}}
(\byear{2018}).
\btitle{On the number of principal components in high dimensions}.
\bjournal{Biometrika}
\bvolume{105}
\bpages{389--402}.
\end{barticle}
\endbibitem

\bibitem[\protect\citeauthoryear{Jung and Marron}{2009}]{jung2009}
\begin{barticle}[author]
\bauthor{\bsnm{Jung},~\bfnm{Sungkyu}\binits{S.}} \AND
  \bauthor{\bsnm{Marron},~\bfnm{J~Stephen}\binits{J.~S.}}
(\byear{2009}).
\btitle{PCA consistency in high dimension, low sample size context}.
\bjournal{The Annals of Statistics}
\bvolume{37}
\bpages{4104--4130}.
\end{barticle}
\endbibitem

\bibitem[\protect\citeauthoryear{Jung, Sen and Marron}{2012}]{jung2012}
\begin{barticle}[author]
\bauthor{\bsnm{Jung},~\bfnm{Sungkyu}\binits{S.}},
  \bauthor{\bsnm{Sen},~\bfnm{Arusharka}\binits{A.}} \AND
  \bauthor{\bsnm{Marron},~\bfnm{JS}\binits{J.}}
(\byear{2012}).
\btitle{Boundary behavior in high dimension, low sample size asymptotics of
  PCA}.
\bjournal{Journal of Multivariate Analysis}
\bvolume{109}
\bpages{190--203}.
\end{barticle}
\endbibitem

\bibitem[\protect\citeauthoryear{Lai et~al.}{2011}]{lai2011}
\begin{barticle}[author]
\bauthor{\bsnm{Lai},~\bfnm{Tze~Leung}\binits{T.~L.}},
  \bauthor{\bsnm{Xing},~\bfnm{Haipeng}\binits{H.}},
  \bauthor{\bsnm{Chen},~\bfnm{Zehao}\binits{Z.}} \betal{et~al.}
(\byear{2011}).
\btitle{Mean--variance portfolio optimization when means and covariances are
  unknown}.
\bjournal{The Annals of Applied Statistics}
\bvolume{5}
\bpages{798--823}.
\end{barticle}
\endbibitem

\bibitem[\protect\citeauthoryear{Lam}{2020}]{lam2020}
\begin{barticle}[author]
\bauthor{\bsnm{Lam},~\bfnm{Clifford}\binits{C.}}
(\byear{2020}).
\btitle{High-dimensional covariance matrix estimation}.
\bjournal{Wiley Interdisciplinary reviews: computational statistics}
\bvolume{12}
\bpages{e1485}.
\end{barticle}
\endbibitem

\bibitem[\protect\citeauthoryear{Lancewicki and Aladjem}{2014}]{tomer2014}
\begin{barticle}[author]
\bauthor{\bsnm{Lancewicki},~\bfnm{Tomer}\binits{T.}} \AND
  \bauthor{\bsnm{Aladjem},~\bfnm{Mayer}\binits{M.}}
(\byear{2014}).
\btitle{Multi-Target Shrinkage Estimation for Covariance Matrices}.
\bjournal{IEEE Transactions on Signal Processing}
\bvolume{62}
\bpages{6380-6390}.
\bdoi{10.1109/TSP.2014.2364784}
\end{barticle}
\endbibitem

\bibitem[\protect\citeauthoryear{Ledoit and Wolf}{2003}]{ledoit2003}
\begin{barticle}[author]
\bauthor{\bsnm{Ledoit},~\bfnm{Olivier}\binits{O.}} \AND
  \bauthor{\bsnm{Wolf},~\bfnm{Michael}\binits{M.}}
(\byear{2003}).
\btitle{Improved estimation of the covariance matrix of stock returns with an
  application to portfolio selection}.
\bjournal{Journal of empirical finance}
\bvolume{10}
\bpages{603--621}.
\end{barticle}
\endbibitem

\bibitem[\protect\citeauthoryear{Ledoit and Wolf}{2004a}]{ledoit2004b}
\begin{barticle}[author]
\bauthor{\bsnm{Ledoit},~\bfnm{Olivier}\binits{O.}} \AND
  \bauthor{\bsnm{Wolf},~\bfnm{Michael}\binits{M.}}
(\byear{2004}a).
\btitle{Honey, I Shrunk the Sample Covariance Matrix}.
\bjournal{Journal of Portfolio Management}
\bvolume{30}
\bpages{110}.
\end{barticle}
\endbibitem

\bibitem[\protect\citeauthoryear{Ledoit and Wolf}{2004b}]{ledoit2004a}
\begin{barticle}[author]
\bauthor{\bsnm{Ledoit},~\bfnm{Olivier}\binits{O.}} \AND
  \bauthor{\bsnm{Wolf},~\bfnm{Michael}\binits{M.}}
(\byear{2004}b).
\btitle{A well-conditioned estimator for large-dimensional covariance
  matrices}.
\bjournal{Journal of multivariate analysis}
\bvolume{88}
\bpages{365--411}.
\end{barticle}
\endbibitem

\bibitem[\protect\citeauthoryear{Ledoit and Wolf}{2017}]{lw2017}
\begin{barticle}[author]
\bauthor{\bsnm{Ledoit},~\bfnm{Olivier}\binits{O.}} \AND
  \bauthor{\bsnm{Wolf},~\bfnm{Michael}\binits{M.}}
(\byear{2017}).
\btitle{Nonlinear shrinkage of the covariance matrix for portfolio selection:
  Markowitz meets Goldilocks}.
\bjournal{The Review of Financial Studies}
\bvolume{30}
\bpages{4349--4388}.
\end{barticle}
\endbibitem

\bibitem[\protect\citeauthoryear{Ledoit and Wolf}{2018}]{lw2018}
\begin{barticle}[author]
\bauthor{\bsnm{Ledoit},~\bfnm{Olivier}\binits{O.}} \AND
  \bauthor{\bsnm{Wolf},~\bfnm{Michael}\binits{M.}}
(\byear{2018}).
\btitle{Optimal estimation of a large-dimensional covariance matrix under
  Stein’s loss}.
\end{barticle}
\endbibitem

\bibitem[\protect\citeauthoryear{Ledoit and Wolf}{2020a}]{lw2020pow}
\begin{barticle}[author]
\bauthor{\bsnm{Ledoit},~\bfnm{Olivier}\binits{O.}} \AND
  \bauthor{\bsnm{Wolf},~\bfnm{Michael}\binits{M.}}
(\byear{2020}a).
\btitle{{The Power of (Non-)Linear Shrinking: A Review and Guide to Covariance
  Matrix Estimation}}.
\bjournal{Journal of Financial Econometrics}
\bvolume{20}
\bpages{187-218}.
\bdoi{10.1093/jjfinec/nbaa007}
\end{barticle}
\endbibitem

\bibitem[\protect\citeauthoryear{Ledoit and Wolf}{2020b}]{lw2020}
\begin{barticle}[author]
\bauthor{\bsnm{Ledoit},~\bfnm{Olivier}\binits{O.}} \AND
  \bauthor{\bsnm{Wolf},~\bfnm{Michael}\binits{M.}}
(\byear{2020}b).
\btitle{{Analytical nonlinear shrinkage of large-dimensional covariance
  matrices}}.
\bjournal{The Annals of Statistics}
\bvolume{48}
\bpages{3043 -- 3065}.
\bdoi{10.1214/19-AOS1921}
\end{barticle}
\endbibitem

\bibitem[\protect\citeauthoryear{Ledoit and Wolf}{2021}]{ledoit2021}
\begin{barticle}[author]
\bauthor{\bsnm{Ledoit},~\bfnm{Olivier}\binits{O.}} \AND
  \bauthor{\bsnm{Wolf},~\bfnm{Michael}\binits{M.}}
(\byear{2021}).
\btitle{Shrinkage estimation of large covariance matrices: Keep it simple,
  statistician?}
\bjournal{Journal of Multivariate Analysis}
\bvolume{186}
\bpages{104796}.
\end{barticle}
\endbibitem

\bibitem[\protect\citeauthoryear{Ledoit and Wolf}{2022}]{ledoit2022}
\begin{barticle}[author]
\bauthor{\bsnm{Ledoit},~\bfnm{Olivier}\binits{O.}} \AND
  \bauthor{\bsnm{Wolf},~\bfnm{Michael}\binits{M.}}
(\byear{2022}).
\btitle{Quadratic shrinkage for large covariance matrices}.
\bjournal{Bernoulli}
\bvolume{28}
\bpages{1519--1547}.
\end{barticle}
\endbibitem

\bibitem[\protect\citeauthoryear{Lee and Shkolnik}{2024a}]{lee2024a}
\begin{bmisc}[author]
\bauthor{\bsnm{Lee},~\bfnm{Youhong}\binits{Y.}} \AND
  \bauthor{\bsnm{Shkolnik},~\bfnm{Alex}\binits{A.}}
(\byear{2024}a).
\btitle{James-Stein regularization of the first principal component}.
\end{bmisc}
\endbibitem

\bibitem[\protect\citeauthoryear{Lee and Shkolnik}{2024b}]{lee2024b}
\begin{bmisc}[author]
\bauthor{\bsnm{Lee},~\bfnm{Youhong}\binits{Y.}} \AND
  \bauthor{\bsnm{Shkolnik},~\bfnm{Alex}\binits{A.}}
(\byear{2024}b).
\btitle{Central Limit Theorems of a Strongly Spiked Eigenvecor and its
  James-Stein estimator}.
\end{bmisc}
\endbibitem

\bibitem[\protect\citeauthoryear{Lettau and Pelger}{2020}]{lettau2020}
\begin{barticle}[author]
\bauthor{\bsnm{Lettau},~\bfnm{Martin}\binits{M.}} \AND
  \bauthor{\bsnm{Pelger},~\bfnm{Markus}\binits{M.}}
(\byear{2020}).
\btitle{Estimating latent asset-pricing factors}.
\bjournal{Journal of Econometrics}
\bvolume{218}
\bpages{1--31}.
\end{barticle}
\endbibitem

\bibitem[\protect\citeauthoryear{Li and Shkolnik}{2024}]{li2024}
\begin{bmisc}[author]
\bauthor{\bsnm{Li},~\bfnm{Chang~Yuan}\binits{C.~Y.}} \AND
  \bauthor{\bsnm{Shkolnik},~\bfnm{Alex}\binits{A.}}
(\byear{2024}).
\btitle{On Minimum Trace Factor Analysis--An Old Song Sung to a New Tune}.
\end{bmisc}
\endbibitem

\bibitem[\protect\citeauthoryear{Li and Stoica}{2005}]{li2005}
\begin{bbook}[author]
\bauthor{\bsnm{Li},~\bfnm{Jian}\binits{J.}} \AND
  \bauthor{\bsnm{Stoica},~\bfnm{Petre}\binits{P.}}
(\byear{2005}).
\btitle{Robust adaptive beamforming}.
\bpublisher{John Wiley \& Sons}.
\end{bbook}
\endbibitem

\bibitem[\protect\citeauthoryear{Li et~al.}{2022}]{li2022}
\begin{barticle}[author]
\bauthor{\bsnm{Li},~\bfnm{Gen}\binits{G.}},
  \bauthor{\bsnm{Cai},~\bfnm{Changxiao}\binits{C.}},
  \bauthor{\bsnm{Poor},~\bfnm{H~Vincent}\binits{H.~V.}} \AND
  \bauthor{\bsnm{Chen},~\bfnm{Yuxin}\binits{Y.}}
(\byear{2022}).
\btitle{Minimax estimation of linear functions of eigenvectors in the face of
  small eigen-gaps}.
\bjournal{arXiv preprint arXiv:2104.03298}.
\end{barticle}
\endbibitem

\bibitem[\protect\citeauthoryear{Luo et~al.}{2023}]{luo2023}
\begin{barticle}[author]
\bauthor{\bsnm{Luo},~\bfnm{Tao}\binits{T.}},
  \bauthor{\bsnm{Chen},~\bfnm{Peng}\binits{P.}},
  \bauthor{\bsnm{Cao},~\bfnm{Zhenxin}\binits{Z.}},
  \bauthor{\bsnm{Zheng},~\bfnm{Le}\binits{L.}} \AND
  \bauthor{\bsnm{Wang},~\bfnm{Zongxin}\binits{Z.}}
(\byear{2023}).
\btitle{URGLQ: An Efficient Covariance Matrix Reconstruction Method for Robust
  Adaptive Beamforming}.
\bjournal{IEEE Transactions on Aerospace and Electronic Systems}.
\end{barticle}
\endbibitem

\bibitem[\protect\citeauthoryear{Maggiar et~al.}{2018}]{maggiar2018}
\begin{barticle}[author]
\bauthor{\bsnm{Maggiar},~\bfnm{Alvaro}\binits{A.}},
  \bauthor{\bsnm{Wachter},~\bfnm{Andreas}\binits{A.}},
  \bauthor{\bsnm{Dolinskaya},~\bfnm{Irina~S}\binits{I.~S.}} \AND
  \bauthor{\bsnm{Staum},~\bfnm{Jeremy}\binits{J.}}
(\byear{2018}).
\btitle{A derivative-free trust-region algorithm for the optimization of
  functions smoothed via gaussian convolution using adaptive multiple
  importance sampling}.
\bjournal{SIAM Journal on Optimization}
\bvolume{28}
\bpages{1478--1507}.
\end{barticle}
\endbibitem

\bibitem[\protect\citeauthoryear{Marchenko and Pastur}{1967}]{mp1967}
\begin{barticle}[author]
\bauthor{\bsnm{Marchenko},~\bfnm{Vladimir~Alexandrovich}\binits{V.~A.}} \AND
  \bauthor{\bsnm{Pastur},~\bfnm{Leonid~Andreevich}\binits{L.~A.}}
(\byear{1967}).
\btitle{Distribution of eigenvalues for some sets of random matrices}.
\bjournal{Matematicheskii Sbornik}
\bvolume{114}
\bpages{507--536}.
\end{barticle}
\endbibitem

\bibitem[\protect\citeauthoryear{Markowitz}{1952}]{markowitz1952}
\begin{barticle}[author]
\bauthor{\bsnm{Markowitz},~\bfnm{Harry}\binits{H.}}
(\byear{1952}).
\btitle{Portfolio Selection}.
\bjournal{The Journal of Finance}
\bvolume{7}
\bpages{77--91}.
\end{barticle}
\endbibitem

\bibitem[\protect\citeauthoryear{Menchero, Orr and Wang}{2011}]{menchero2011}
\begin{barticle}[author]
\bauthor{\bsnm{Menchero},~\bfnm{Jose}\binits{J.}},
  \bauthor{\bsnm{Orr},~\bfnm{D}\binits{D.}} \AND
  \bauthor{\bsnm{Wang},~\bfnm{Jun}\binits{J.}}
(\byear{2011}).
\btitle{The Barra US equity model (USE4), methodology notes}.
\bjournal{MSCI Barra}.
\end{barticle}
\endbibitem

\bibitem[\protect\citeauthoryear{Michaud}{1989}]{michaud1989}
\begin{barticle}[author]
\bauthor{\bsnm{Michaud},~\bfnm{Richard~O}\binits{R.~O.}}
(\byear{1989}).
\btitle{The Markowitz optimization enigma: Is ‘optimized’optimal?}
\bjournal{Financial analysts journal}
\bvolume{45}
\bpages{31--42}.
\end{barticle}
\endbibitem

\bibitem[\protect\citeauthoryear{Ollila, Palomar and Pascal}{2020}]{ollila2020}
\begin{barticle}[author]
\bauthor{\bsnm{Ollila},~\bfnm{Esa}\binits{E.}},
  \bauthor{\bsnm{Palomar},~\bfnm{Daniel~P}\binits{D.~P.}} \AND
  \bauthor{\bsnm{Pascal},~\bfnm{Fr{\'e}d{\'e}ric}\binits{F.}}
(\byear{2020}).
\btitle{Shrinking the eigenvalues of M-estimators of covariance matrix}.
\bjournal{IEEE Transactions on Signal Processing}
\bvolume{69}
\bpages{256--269}.
\end{barticle}
\endbibitem

\bibitem[\protect\citeauthoryear{Onatski}{2012}]{onatski2012}
\begin{barticle}[author]
\bauthor{\bsnm{Onatski},~\bfnm{Alexei}\binits{A.}}
(\byear{2012}).
\btitle{Asymptotics of the principal components estimator of large factor
  models with weakly influential factors}.
\bjournal{Journal of Econometrics}
\bvolume{168}
\bpages{244--258}.
\end{barticle}
\endbibitem

\bibitem[\protect\citeauthoryear{Pafka and Kondor}{2003}]{pafka2003}
\begin{barticle}[author]
\bauthor{\bsnm{Pafka},~\bfnm{Szilárd}\binits{S.}} \AND
  \bauthor{\bsnm{Kondor},~\bfnm{Imre}\binits{I.}}
(\byear{2003}).
\btitle{Noisy covariance matrices and portfolio optimization II}.
\bjournal{Physica A: Statistical Mechanics and its Applications}
\bvolume{319}
\bpages{487-494}.
\bdoi{https://doi.org/10.1016/S0378-4371(02)01499-1}
\end{barticle}
\endbibitem

\bibitem[\protect\citeauthoryear{Paul}{2007}]{paul2007}
\begin{barticle}[author]
\bauthor{\bsnm{Paul},~\bfnm{Debashis}\binits{D.}}
(\byear{2007}).
\btitle{Asymptotics of sample eigenstructure for a large dimensional spiked
  covariance model}.
\bjournal{Statistica Sinica}
\bpages{1617--1642}.
\end{barticle}
\endbibitem

\bibitem[\protect\citeauthoryear{Quijano and Zurk}{2015}]{qj2015}
\begin{barticle}[author]
\bauthor{\bsnm{Quijano},~\bfnm{Jorge~E}\binits{J.~E.}} \AND
  \bauthor{\bsnm{Zurk},~\bfnm{Lisa~M}\binits{L.~M.}}
(\byear{2015}).
\btitle{Eigenvector pruning method for high resolution beamforming}.
\bjournal{The Journal of the Acoustical Society of America}
\bvolume{138}
\bpages{2152--2160}.
\end{barticle}
\endbibitem

\bibitem[\protect\citeauthoryear{Ross}{1976}]{ross1976}
\begin{barticle}[author]
\bauthor{\bsnm{Ross},~\bfnm{Stephen~A}\binits{S.~A.}}
(\byear{1976}).
\btitle{The arbitrage theory of capital asset pricing}.
\bjournal{Journal of economic theory}
\bvolume{13}
\bpages{341--360}.
\end{barticle}
\endbibitem

\bibitem[\protect\citeauthoryear{Saunderson et~al.}{2012}]{saunderson2012}
\begin{barticle}[author]
\bauthor{\bsnm{Saunderson},~\bfnm{J.}\binits{J.}},
  \bauthor{\bsnm{Chandrasekaran},~\bfnm{V.}\binits{V.}},
  \bauthor{\bsnm{Parrilo},~\bfnm{P.~A.}\binits{P.~A.}} \AND
  \bauthor{\bsnm{Willsky},~\bfnm{A.~S.}\binits{A.~S.}}
(\byear{2012}).
\btitle{Diagonal and {Low}-{Rank} {Matrix} {Decompositions}, {Correlation}
  {Matrices}, and {Ellipsoid} {Fitting}}.
\bjournal{SIAM Journal on Matrix Analysis and Applications}
\bvolume{33}
\bpages{1395--1416}.
\bdoi{10.1137/120872516}
\end{barticle}
\endbibitem

\bibitem[\protect\citeauthoryear{Shapiro}{1985}]{shapiro1985}
\begin{barticle}[author]
\bauthor{\bsnm{Shapiro},~\bfnm{Alexander}\binits{A.}}
(\byear{1985}).
\btitle{Identifiability of factor analysis: Some results and open problems}.
\bjournal{Linear Algebra and its Applications}
\bvolume{70}
\bpages{1--7}.
\end{barticle}
\endbibitem

\bibitem[\protect\citeauthoryear{Shen, Shen and Marron}{2013}]{shen2013}
\begin{barticle}[author]
\bauthor{\bsnm{Shen},~\bfnm{Dan}\binits{D.}},
  \bauthor{\bsnm{Shen},~\bfnm{Haipeng}\binits{H.}} \AND
  \bauthor{\bsnm{Marron},~\bfnm{James~Stephen}\binits{J.~S.}}
(\byear{2013}).
\btitle{Consistency of sparse PCA in high dimension, low sample size contexts}.
\bjournal{Journal of Multivariate Analysis}
\bvolume{115}
\bpages{317--333}.
\end{barticle}
\endbibitem

\bibitem[\protect\citeauthoryear{Shen, Shen and Marron}{2016}]{shen2016gen}
\begin{barticle}[author]
\bauthor{\bsnm{Shen},~\bfnm{Dan}\binits{D.}},
  \bauthor{\bsnm{Shen},~\bfnm{Haipeng}\binits{H.}} \AND
  \bauthor{\bsnm{Marron},~\bfnm{James~S}\binits{J.~S.}}
(\byear{2016}).
\btitle{A general framework for consistency of principal component analysis}.
\bjournal{Journal of Machine Learning Research}
\bvolume{17}
\bpages{1--34}.
\end{barticle}
\endbibitem

\bibitem[\protect\citeauthoryear{Shen et~al.}{2016}]{shen2016}
\begin{barticle}[author]
\bauthor{\bsnm{Shen},~\bfnm{Dan}\binits{D.}},
  \bauthor{\bsnm{Shen},~\bfnm{Haipeng}\binits{H.}},
  \bauthor{\bsnm{Zhu},~\bfnm{Hongtu}\binits{H.}} \AND
  \bauthor{\bsnm{Marron},~\bfnm{James~Stephen}\binits{J.~S.}}
(\byear{2016}).
\btitle{The statistics and mathematics of high dimensional low sample size
  asympotics}.
\bjournal{Statistica Sinica}
\bvolume{26}
\bpages{1747--1770}.
\end{barticle}
\endbibitem

\bibitem[\protect\citeauthoryear{Shkolnik}{2022}]{shkolnik2022}
\begin{barticle}[author]
\bauthor{\bsnm{Shkolnik},~\bfnm{Alex}\binits{A.}}
(\byear{2022}).
\btitle{James-Stein estimation of the first principal component}.
\bjournal{Stat, forthcoming}.
\end{barticle}
\endbibitem

\bibitem[\protect\citeauthoryear{Vorobyov}{2013}]{vorobyov2013}
\begin{barticle}[author]
\bauthor{\bsnm{Vorobyov},~\bfnm{Sergiy~A}\binits{S.~A.}}
(\byear{2013}).
\btitle{Principles of minimum variance robust adaptive beamforming design}.
\bjournal{Signal Processing}
\bvolume{93}
\bpages{3264--3277}.
\end{barticle}
\endbibitem

\bibitem[\protect\citeauthoryear{Wang and Fan}{2017}]{wang2017}
\begin{barticle}[author]
\bauthor{\bsnm{Wang},~\bfnm{Weichen}\binits{W.}} \AND
  \bauthor{\bsnm{Fan},~\bfnm{Jianqing}\binits{J.}}
(\byear{2017}).
\btitle{Asymptotics of empirical eigenstructure for high dimensional spiked
  covariance}.
\bjournal{The Annals of Statistics}
\bvolume{45}
\bpages{1342--1374}.
\end{barticle}
\endbibitem

\bibitem[\protect\citeauthoryear{Wang and Zhang}{2024}]{wang2024}
\begin{barticle}[author]
\bauthor{\bsnm{Wang},~\bfnm{Xuanci}\binits{X.}} \AND
  \bauthor{\bsnm{Zhang},~\bfnm{Bin}\binits{B.}}
(\byear{2024}).
\btitle{Target selection in shrinkage estimation of covariance matrix: A
  structural similarity approach}.
\bjournal{Statistics \& Probability Letters}
\bpages{110048}.
\end{barticle}
\endbibitem

\bibitem[\protect\citeauthoryear{Weyl}{1912}]{weyl1912}
\begin{barticle}[author]
\bauthor{\bsnm{Weyl},~\bfnm{Hermann}\binits{H.}}
(\byear{1912}).
\btitle{Das asymptotische Verteilungsgesetz der Eigenwerte linearer partieller
  Differentialgleichungen (mit einer Anwendung auf die Theorie der
  Hohlraumstrahlung)}.
\bjournal{Mathematische Annalen}
\bvolume{71}
\bpages{441--479}.
\end{barticle}
\endbibitem

\bibitem[\protect\citeauthoryear{Won et~al.}{2013}]{won2013}
\begin{barticle}[author]
\bauthor{\bsnm{Won},~\bfnm{Joong-Ho}\binits{J.-H.}},
  \bauthor{\bsnm{Lim},~\bfnm{Johan}\binits{J.}},
  \bauthor{\bsnm{Kim},~\bfnm{Seung-Jean}\binits{S.-J.}} \AND
  \bauthor{\bsnm{Rajaratnam},~\bfnm{Bala}\binits{B.}}
(\byear{2013}).
\btitle{Condition-number-regularized covariance estimation}.
\bjournal{Journal of the Royal Statistical Society Series B: Statistical
  Methodology}
\bvolume{75}
\bpages{427--450}.
\end{barticle}
\endbibitem

\bibitem[\protect\citeauthoryear{Xie et~al.}{2021}]{xie2021}
\begin{barticle}[author]
\bauthor{\bsnm{Xie},~\bfnm{Lei}\binits{L.}},
  \bauthor{\bsnm{He},~\bfnm{Zishu}\binits{Z.}},
  \bauthor{\bsnm{Tong},~\bfnm{Jun}\binits{J.}},
  \bauthor{\bsnm{Li},~\bfnm{Jun}\binits{J.}} \AND
  \bauthor{\bsnm{Xi},~\bfnm{Jiangtao}\binits{J.}}
(\byear{2021}).
\btitle{Cross-validated tuning of shrinkage factors for MVDR beamforming based
  on regularized covariance matrix estimation}.
\bjournal{arXiv preprint arXiv:2104.01909}.
\end{barticle}
\endbibitem

\bibitem[\protect\citeauthoryear{Yan, Chen and Fan}{2021}]{yan2021}
\begin{bmisc}[author]
\bauthor{\bsnm{Yan},~\bfnm{Yuling}\binits{Y.}},
  \bauthor{\bsnm{Chen},~\bfnm{Yuxin}\binits{Y.}} \AND
  \bauthor{\bsnm{Fan},~\bfnm{Jianqing}\binits{J.}}
(\byear{2021}).
\btitle{Inference for {Heteroskedastic} {PCA} with {Missing} {Data}}.
\bnote{arXiv:2107.12365 [cs, math, stat]}.
\bdoi{10.48550/arXiv.2107.12365}
\end{bmisc}
\endbibitem

\bibitem[\protect\citeauthoryear{Yao, Zheng and Bai}{2015}]{yao2015}
\begin{barticle}[author]
\bauthor{\bsnm{Yao},~\bfnm{Jianfeng}\binits{J.}},
  \bauthor{\bsnm{Zheng},~\bfnm{Shurong}\binits{S.}} \AND
  \bauthor{\bsnm{Bai},~\bfnm{ZD}\binits{Z.}}
(\byear{2015}).
\btitle{Sample covariance matrices and high-dimensional data analysis}.
\bjournal{Cambridge UP, New York}.
\end{barticle}
\endbibitem

\bibitem[\protect\citeauthoryear{Yata and Aoshima}{2009}]{yata2009}
\begin{barticle}[author]
\bauthor{\bsnm{Yata},~\bfnm{Kazuyoshi}\binits{K.}} \AND
  \bauthor{\bsnm{Aoshima},~\bfnm{Makoto}\binits{M.}}
(\byear{2009}).
\btitle{PCA consistency for non-Gaussian data in high dimension, low sample
  size context}.
\bjournal{Communications in Statistics-Theory and Methods}
\bvolume{38}
\bpages{2634--2652}.
\end{barticle}
\endbibitem

\bibitem[\protect\citeauthoryear{Yata and Aoshima}{2012}]{yata2012}
\begin{barticle}[author]
\bauthor{\bsnm{Yata},~\bfnm{Kazuyoshi}\binits{K.}} \AND
  \bauthor{\bsnm{Aoshima},~\bfnm{Makoto}\binits{M.}}
(\byear{2012}).
\btitle{Effective PCA for high-dimension, low-sample-size data with noise
  reduction via geometric representations}.
\bjournal{Journal of multivariate analysis}
\bvolume{105}
\bpages{193--215}.
\end{barticle}
\endbibitem

\bibitem[\protect\citeauthoryear{Yata and Aoshima}{2013}]{yata2013}
\begin{barticle}[author]
\bauthor{\bsnm{Yata},~\bfnm{Kazuyoshi}\binits{K.}} \AND
  \bauthor{\bsnm{Aoshima},~\bfnm{Makoto}\binits{M.}}
(\byear{2013}).
\btitle{PCA consistency for the power spiked model in high-dimensional
  settings}.
\bjournal{Journal of multivariate analysis}
\bvolume{122}
\bpages{334--354}.
\end{barticle}
\endbibitem

\bibitem[\protect\citeauthoryear{Zhang, Cai and Wu}{2022}]{zhang2022}
\begin{barticle}[author]
\bauthor{\bsnm{Zhang},~\bfnm{Anru~R.}\binits{A.~R.}},
  \bauthor{\bsnm{Cai},~\bfnm{T.~Tony}\binits{T.~T.}} \AND
  \bauthor{\bsnm{Wu},~\bfnm{Yihong}\binits{Y.}}
(\byear{2022}).
\btitle{Heteroskedastic {PCA}: {Algorithm}, optimality, and applications}.
\bjournal{The Annals of Statistics}
\bvolume{50}.
\bdoi{10.1214/21-AOS2074}
\end{barticle}
\endbibitem

\bibitem[\protect\citeauthoryear{Zhou and Chen}{2023}]{zhou2023}
\begin{bmisc}[author]
\bauthor{\bsnm{Zhou},~\bfnm{Yuchen}\binits{Y.}} \AND
  \bauthor{\bsnm{Chen},~\bfnm{Yuxin}\binits{Y.}}
(\byear{2023}).
\btitle{Deflated {HeteroPCA}: {Overcoming} the curse of ill-conditioning in
  heteroskedastic {PCA}}.
\bnote{arXiv:2303.06198 [cs, math, stat]}.
\end{bmisc}
\endbibitem

\bibitem[\protect\citeauthoryear{Zhu, Xu and Ye}{2020}]{zhu2020}
\begin{barticle}[author]
\bauthor{\bsnm{Zhu},~\bfnm{Xingyu}\binits{X.}},
  \bauthor{\bsnm{Xu},~\bfnm{Xu}\binits{X.}} \AND
  \bauthor{\bsnm{Ye},~\bfnm{Zhongfu}\binits{Z.}}
(\byear{2020}).
\btitle{Robust adaptive beamforming via subspace for interference covariance
  matrix reconstruction}.
\bjournal{Signal Processing}
\bvolume{167}
\bpages{107289}.
\end{barticle}
\endbibitem

\end{thebibliography}


\end{document}